\documentclass[12pt]{amsart}
\usepackage{amsmath}
\usepackage{amssymb}
\usepackage{enumitem}
\usepackage{color}

\usepackage{hyperref}
\hypersetup{
     colorlinks   = true,
     citecolor    = black,
     linkcolor    = blue
} 
\usepackage{cite}
\usepackage{url}
\usepackage[usenames,dvipsnames]{xcolor}
\usepackage{verbatim}

\usepackage{graphicx}
\usepackage{tikz}
\usepackage{tikz-cd}
\usepackage{caption}
\usepackage{subcaption}

\usepackage{float}

\usepackage{booktabs}
\usepackage{array}

\makeatletter 
\def\@cite#1#2{{\m@th\upshape\bfseries%
[{#1\if@tempswa{\m@th\upshape\mdseries, #2}\fi}]}}
\makeatother 

\theoremstyle{plain}
\newtheorem{theorem}{Theorem}
\newtheorem{coro}{Corollary}
\newtheorem{thm}{Theorem}[section]
\newtheorem{cor}[thm]{Corollary}
\newtheorem{prop}[thm]{Proposition}
\newtheorem{lem}[thm]{Lemma}
\newtheorem{lemma}[thm]{Lemma}

\theoremstyle{definition}
\newtheorem{defn}[thm]{Definition}
\newtheorem{ass}[thm]{Assumption}
\theoremstyle{}
\newtheorem{rem}[thm]{Remark}

\numberwithin{equation}{subsection}
\captionsetup{figurewithin=section}

\renewcommand{\bold}[1]{\medskip \noindent {\bf #1 }\nopagebreak}

\newcommand{\nc}{\newcommand}
\newcommand{\rnc}{\renewcommand}


\nc\bA{\mathbb{A}}
\nc\bB{\mathbb{B}}
\nc\bC{\mathbb{C}}
\nc\bD{\mathbb{D}}
\nc\bE{\mathbb{E}}
\nc\bF{\mathbb{F}}
\nc\bG{\mathbb{G}}
\nc\bH{\mathbb{H}}
\nc\bI{\mathbb{I}}
\nc{\bJ}{\mathbb{J}} 
\nc\bK{\mathbb{K}}
\nc\bL{\mathbb{L}}
\nc\bM{\mathbb{M}}
\nc\bN{\mathbb{N}}
\nc\bO{\mathbb{O}}
\nc\bP{\mathbb{P}}
\nc\bQ{\mathbb{Q}}
\nc\bR{\mathbb{R}}
\nc\bS{\mathbb{S}}
\nc\bT{\mathbb{T}}
\nc\bU{\mathbb{U}}
\nc\bV{\mathbb{V}}
\nc\bW{\mathbb{W}}
\nc\bY{\mathbb{Y}}
\nc\bX{\mathbb{X}}
\nc\bZ{\mathbb{Z}}
\nc\cA{\mathcal{A}}
\nc\cB{\mathcal{B}}
\nc\cC{\mathcal{C}}
\nc\cD{\mathcal{D}}
\nc\cE{\mathcal{E}}
\nc\cF{\mathcal{F}}
\nc\cG{\mathcal{G}}
\nc\cH{\mathcal{H}}
\nc\cI{\mathcal{I}}
\nc{\cJ}{\mathcal{J}} 
\nc\cK{\mathcal{K}}
\nc\cL{\mathcal{L}}
\nc\cM{\mathcal{M}}
\nc\cN{\mathcal{N}}
\nc\cO{\mathcal{O}}
\nc\cP{\mathcal{P}}
\nc\cQ{\mathcal{Q}}
\nc\cR{\mathcal{R}}
\nc\cS{\mathcal{S}}
\nc\cT{\mathcal{T}}
\nc\cU{\mathcal{U}}
\nc\cV{\mathcal{V}}
\nc\cW{\mathcal{W}}
\nc\cY{\mathcal{Y}}
\nc\cX{\mathcal{X}}
\nc\cZ{\mathcal{Z}}

\newcommand{\ra}{\longrightarrow}
\newcommand{\M}{\mathcal{M}}

\newcommand{\strata}{\mathcal{H}}

\newcommand{\C}{\mathbb C}
\newcommand{\R}{\mathbb R}
\newcommand{\Q}{\mathbb Q}
\newcommand{\Z}{\mathbb Z}
\newcommand{\wt}{\widetilde}
\newcommand{\ds}{\displaystyle}

\newcommand{\Xmin}{(X_{\operatorname{min}}, \omega_{\operatorname{min}})}
\newcommand{\Qmin}{(Q_{\operatorname{min}}, q_{\operatorname{min}})}
\newcommand{\piX}{\pi_{X_{\operatorname{min}}}}
\newcommand{\piQ}{\pi_{Q_{\operatorname{min}}}}

\nc{\dmo}{\DeclareMathOperator}
\rnc{\Re}{\operatorname{Re}}
\rnc{\Im}{\operatorname{Im}}
\rnc{\span}{\operatorname{span}}
\dmo{\rank}{rank}
\dmo{\End}{End}
\dmo{\Hom}{Hom}
\dmo{\Jac}{Jac}
\dmo{\Id}{Id}
\dmo{\Ann}{Ann}
\dmo{\Area}{Area}
\dmo{\CP}{\bC P^1}
\dmo{\Aut}{Aut}
\dmo{\Sp}{Sp}
\dmo{\SL}{SL}
\dmo{\GL}{GL}

\title[Periodic Points in Genus Two]{Periodic Points in Genus Two: Holomorphic Sections over Hilbert Modular Varieties, Teichm\"uller dynamics, and Billiards}
%
\author[Apisa]{Paul~Apisa}
%

\begin{document}
\maketitle

\begin{abstract}
We show that all  $\GL(2, \bR)$ equivariant point markings over orbit closures of primitive genus two translation surfaces arise from marking pairs of points exchanged by the hyperelliptic involution, Weierstrass points, or the golden points on $\Omega E_5$. As corollaries, we classify the holomorphically varying families of points over orbifold covers of genus two Hilbert modular surfaces, solve the finite blocking problem on genus two translation surfaces, and show that there is at most one nonarithmetic rank two orbit closure in $\mathcal{H}(6)$. 
\end{abstract}

%
%
%
%

\section{Introduction}\label{S:intro}

The bundle of holomorphic one-forms over the moduli space of Riemann surfaces admits a $\GL(2, \R)$ action arising from its complex structure and Teichm\"uller geodesic flow. While the goal of classifying every $\GL(2, \R)$ orbit closure is elusive in general, McMullen discovered a solution in genus two. In a sequence of papers~\cite{Mc}, \cite{McM:spin}, \cite{Mc4}, and \cite{Mc5}, he showed that every orbit closure is one of the following: a stratum, a locus of torus covers, or a locus of eigenforms of real multiplication by an order in a real quadratic field.

Given a translation surface $(X, \omega)$ a point $p$ is said to be a periodic point if $p$ is not a zero of $\omega$ and $(X, \omega)$ and $(X, \omega; p)$ have orbit closures of the same dimension (that the orbit closures are manifolds and hence have a well-defined dimension follows from work of Eskin-Mirzakhani~\cite{EM} and Eskin-Mirzakhani-Mohammadi~\cite{EMM}). A translation surface $(X, \omega)$ is said to be primitive if $\omega$ is not the pullback of a holomorphic one-form on a lower genus Riemann surface under a holomorphic branched cover. M\"oller~\cite{M2} showed that the only periodic points on primitive Veech surfaces in genus two are Weierstrass points. In Apisa~\cite{Apisa-mp1}, the author showed that the same holds for any genus two translation surface with dense orbit. 

From these data one might be tempted to conjecture that Weierstrass points are the only possible periodic points on primitive genus two translation surfaces. However, Wright discovered two points - dubbed the golden points - on the golden eigenform locus that he conjectured were periodic. This conjecture was confirmed in Kumar-Mukamel~\cite{MK16} and will be re-established by different means in forthcoming work of Eskin-McMullen-Mukamel-Wright. We show the following:

\begin{theorem}\label{T2}
If a primitive genus two translation surface contains a periodic point that is not a Weierstrass point then its orbit closure is the golden eigenform locus and the periodic point is one of the golden points.
\end{theorem}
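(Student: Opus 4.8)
By McMullen's classification \cite{Mc}, a primitive genus two orbit closure $\mathcal{M}$ is a stratum, a primitive Veech curve, or an eigenform locus $\Omega E_D\subset\mathcal{H}(1,1)$ with $D$ not a perfect square. M\"oller~\cite{M2} settled the primitive Veech surfaces and Apisa~\cite{Apisa-mp1} settled the surfaces with dense orbit, so I may assume $\mathcal{M}=\Omega E_D$ with $D$ not a square. Since $p$ is periodic, the orbit closure $\widehat{\mathcal{M}}$ of $(X,\omega;p)$ has the same dimension as $\mathcal{M}$ and hence surjects onto it by a finite $\GL(2,\bR)$--equivariant covering; thus $p$ lies in the finite periodic point set of $(X,\omega)$, and as these sets fit together into a finite cover of $\Omega E_D$ one may argue over $\Omega E_D$ globally.

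\textbf{Passage to Jacobians and Hilbert modular varieties.} Fix a Weierstrass point $w$ on $X$ and set $a(X,\omega):=[(p)-(w)]\in\Jac(X)$; since $X$ is hyperelliptic, $[(\iota p)-(w)]=-a$, so $a$ is determined up to sign by the pair $\{p,\iota p\}$. As $(X,\omega)$ ranges over $\Omega E_D$ the Jacobian carries real multiplication by $\mathcal{O}_D$, producing a map from $\Omega E_D$ to the Hilbert modular surface $X_D$; after a finite cover there is a universal genus two curve, sitting via Abel--Jacobi as a divisor $\mathcal{C}$ in the universal abelian surface $\mathcal{A}\to X_D$, and $p$ determines a section of $\mathcal{A}$ with values on $\mathcal{C}$. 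Since $a$ depends only on the underlying Riemann surface and not on the eigenform, this section descends to (a finite cover of) $X_D$; and by the marked-points machinery of Apisa--Wright, periodicity forces it to be a \emph{torsion} section. Theorem~\ref{T2} thus reduces to classifying the torsion sections of $\mathcal{A}\to X_D$ whose values lie on $\mathcal{C}$.

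\textbf{Torsion sections on the universal curve.} The zero section lies on $\mathcal{C}$ (it is the image of $w$), as do the finitely many $2$--torsion sections coming from differences of Weierstrass points; together these give precisely the Weierstrass periodic points. To show that no further torsion section lies on $\mathcal{C}$ unless $D=5$, I would argue globally --- a rigidity statement in the spirit of the relative Manin--Mumford conjecture, and especially rigid here because $\mathcal{C}$ moves nontrivially in the family --- combining a compactification of $X_D$ with the description of the Deligne--Mumford boundary of $\Omega E_D$ (Bainbridge; Bainbridge--M\"oller): a torsion section on $\mathcal{C}$ must, over each boundary stratum, degenerate to a torsion point of the corresponding semiabelian limit lying on the nodal limit of the curve, and reconciling these constraints across all strata is so restrictive that a further such section exists only for $D=5$, where it is the golden section, whose fiberwise value is a golden point (as confirmed in Kumar--Mukamel~\cite{MK16}). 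Granting this, if $p$ is not a Weierstrass point then $a$ is a nonzero torsion class, forcing $D=5$; hence $\mathcal{M}=\Omega E_5$ and $p$ is a golden point.

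\textbf{The main obstacle.} The hard part is the last step: proving, uniformly in $D$, that the only torsion section carried by $\mathcal{C}$ beyond the evident $2$--torsion is the golden one, and only when $D=5$. This is a genuinely global statement about holomorphic sections over Hilbert modular varieties, not extractable from local or monodromy data alone; understanding precisely why the small, ramified discriminant $5$ is the unique exception is where the substance lies. (Even the preliminary reduction --- that periodicity forces a torsion section at all, for the rank-two eigenform loci --- requires care.)
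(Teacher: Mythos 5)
Your proposal is not a proof: it reduces the theorem to two statements, neither of which you establish, and the second of which is the entire difficulty. First, the passage from ``$p$ is a periodic point'' to ``the class $[(p)-(w)]$ gives a \emph{torsion} section of $\mathcal{A}\to X_D$'' is asserted to follow from ``the marked-points machinery of Apisa--Wright,'' but no such implication is available there; that machinery constrains nongeneric marked points to be periodic points, points identified under the hyperelliptic map, etc., and says nothing about torsion in the Jacobian. (There is also a smaller issue that the choice of Weierstrass point $w$ is not monodromy-invariant, so even defining the section over a component of a cover of $X_D$ needs an argument.) Second, and decisively, the crux of your argument --- that the only torsion section carried by the universal curve $\mathcal{C}$ beyond the evident $2$-torsion occurs for $D=5$ and is the golden section --- is a relative Manin--Mumford-type rigidity statement over Hilbert modular surfaces which you explicitly leave unproved (``I would argue globally \dots is so restrictive that \dots''). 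As you yourself note in the final paragraph, this is exactly where the substance lies, so the proposal is a plan, not a proof.

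For contrast, the paper never passes to Jacobians or torsion at all. It works directly in the flat geometry: after the same reduction to nonarithmetic eigenform loci in $\mathcal{H}(1,1)$, it takes a horizontally periodic three-cylinder surface, uses the rel deformation $\rho$ and the Mirzakhani--Wright partial compactification, and shows (Proposition~\ref{P:boundary}) that the marked point can collide with a Weierstrass point or zero only on the boundary or on a Teichm\"uller curve, where M\"oller's classification applies. A sequence of shearing/collapsing arguments (Lemmas~\ref{L:boundary}, \ref{L:singlecylinder}, \ref{L:cylinder1}, \ref{L:half}, \ref{L:middle-cylinder}) then pins down where the marked point can sit, identifies the golden eigenform locus and the golden points when collisions inside $\M$ are possible, and otherwise derives a contradiction with nonarithmeticity via a rationality constraint on the cylinder circumferences. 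If you want to salvage your approach, you would need to supply both the periodicity-implies-torsion step and the global classification of torsion sections on $\mathcal{C}$; the latter is of roughly the same depth as the theorem itself.
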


\noindent The strategy of the proof is to reduce the problem to M\"oller's classification of periodic points on primitive genus two Veech surfaces~\cite{M2} using degeneration arguments and the Mirzakhani-Wright partial compactification of an affine invariant submanifold~\cite{MirWri}. The argument is presented in Section~\ref{S:PT2}.

In fact, Theorem~\ref{T2} implies a stronger result. Say that a collection of points $B$ on a translation surface $(X, \omega)$ is generic if the complex dimension of the orbit closure of $(X, \omega; B)$ is $|B|$ more than the dimension of the orbit closure of $(X, \omega)$. A collection of points will be called nongeneric otherwise.

\begin{coro}\label{C1}
Any nongeneric collection of points on a primitive genus two translation surface must contain singularities of the flat metric, periodic points (i.e. Weierstrass points or golden points), or two points exchanged by the hyperelliptic involution.
\end{coro}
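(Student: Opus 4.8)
The plan is to induct on $|B|$, using Theorem~\ref{T2} for the base case and a structural analysis of the marked points for the inductive step. For $|B|=1$, write $B=\{p\}$: if $p$ is a zero of $\omega$ we are done, and otherwise nongenericity is by definition the statement that $p$ is a periodic point, so $p$ is a Weierstrass point or a golden point by Theorem~\ref{T2}. For $|B|=n\ge 2$ assume the corollary for all smaller collections; we may assume $B$ contains no zero of $\omega$. If some nonempty proper subset $B'\subsetneq B$ is already nongeneric, the inductive hypothesis produces the required configuration inside $B'\subseteq B$. So it suffices to treat a \emph{minimal} nongeneric collection: every proper subset of $B$ is generic.

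In that case, since marking one additional point raises the dimension of an orbit closure by at most one, we get $\dim\overline{\GL(2,\R)(X,\omega;B)}=\dim\overline{\GL(2,\R)(X,\omega;B\setminus\{p\})}$ for every $p\in B$; that is, each point of $B$ is periodic relative to the others, while the others move freely over $\cN:=\overline{\GL(2,\R)(X,\omega)}$. A period-coordinate computation then shows that the tangent space to $\overline{\GL(2,\R)(X,\omega;B)}$ meets the space of relative cohomology classes supported on $B$ in a $\GL(2,\R)$-invariant hyperplane, so the marked points satisfy a single $\GL(2,\R)$-equivariant relation $\sum_{i}\lambda_i\int_{z_0}^{p_i}\omega\equiv c$ (with $z_0$ a fixed zero and $c$ a holomorphic function of $(X,\omega)\in\cN$), in which every coefficient $\lambda_i$ is nonzero --- a vanishing coefficient would localize the relation on a proper subset of $B$ and contradict minimality. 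Since minimality also forces every singleton $\{p_i\}$ to be generic, no point of $B$ is itself a periodic point of $(X,\omega)$; hence the goal becomes to show that such a relation forces $n=2$ and $p_2=\iota(p_1)$ for the hyperelliptic involution $\iota$.

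This last step is the crux, and the one I expect to be the main obstacle. The homological constraints above do not by themselves rule out exotic relations --- for instance a midpoint relation $\int^{p_3}\omega=\tfrac12(\int^{p_1}\omega+\int^{p_2}\omega)$, or a multiplication relation $\int^{p_1}\omega=\mu\int^{p_2}\omega$ with $\mu\in\bQ\setminus\{\pm1\}$, is consistent with $\GL(2,\R)$-equivariance, finiteness, and minimality --- so genuine genus two input is needed. I would first invoke McMullen's classification of primitive genus two orbit closures~\cite{Mc, McM:spin, Mc4, Mc5} to reduce $\cN$ to a stratum or an eigenform (Hilbert modular) locus, and then re-run the degeneration scheme that proves Theorem~\ref{T2}: pass to the boundary of $\overline{\GL(2,\R)(X,\omega;B)}$ in the Mirzakhani--Wright partial compactification~\cite{MirWri}, where the surface acquires a node and breaks into lower-complexity pieces, carry the relation among $p_1,\dots,p_n$ into the limit, and apply M\"oller's description of periodic points on primitive genus two Veech surfaces~\cite{M2} together with an induction on complexity. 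The delicate points will be controlling collisions of marked points with one another and with the nodes of the limiting surface, and choosing the degeneration so that the points governed by the relation stay separated and the relation persists to the boundary; once that is arranged, the boundary surfaces are rigid enough that the only surviving relation is the one identifying $p_2$ with $\iota(p_1)$, which completes the induction and hence the proof.
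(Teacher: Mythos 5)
Your reduction to a minimal nongeneric collection and the base case via Theorem~\ref{T2} are fine, but the proof has a genuine gap exactly where you flag it: the step showing that the single $\GL(2,\R)$-equivariant relation $\sum_i \lambda_i \int_{z_0}^{p_i}\omega \equiv c$ forces $n=2$ and $p_2=\iota(p_1)$ is never actually carried out. You correctly observe that exotic relations (midpoint relations, rational-multiple relations) are consistent with equivariance and minimality, and then propose to exclude them by re-running the degeneration scheme of Section~\ref{S:PT2} over the Mirzakhani--Wright boundary; but the assertions that the relation ``persists to the boundary,'' that collisions of the marked points with each other and with the node can be avoided, and that ``the boundary surfaces are rigid enough that the only surviving relation'' is the hyperelliptic one are precisely the content that would need proof, and none of it is supplied. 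To gauge the scale of what is being waved through: excluding such relations for a \emph{single} marked point over an eigenform locus is the whole of Theorem~\ref{T2}, which occupies all of Section~\ref{S:PT2}; doing it for arbitrary finite collections with a nontrivial relation is strictly harder, not a routine induction on complexity.

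The paper's proof avoids this entirely by quoting the general structural theorem of Apisa--Wright~\cite[Theorem 1.3]{Apisa-Wright}: a nongeneric collection containing no singularity and no periodic point must contain two points identified under a holomorphic map $f\colon X \to Y$ to a lower-complexity abelian or quadratic differential, and if $f$ does not factor through a translation covering it has degree two. Primitivity rules out torus covers, so $f$ is the degree-two map to the sphere, i.e.\ the hyperelliptic quotient, and the two points are exchanged by $\iota$. In other words, the ``genuine genus two input'' you are looking for is not another degeneration argument but this citation (which is valid in all genera); your sketch amounts to proposing to reprove a weak form of that theorem from scratch, and as written it does not do so. Either invoke~\cite[Theorem 1.3]{Apisa-Wright} at the crux, or supply a complete argument for the boundary analysis you outline.
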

\begin{proof}[Proof of Corollary~\ref{C1} assuming Theorem~\ref{T2}:]
By Apisa-Wright~\cite[Theorem 1.3]{Apisa-Wright} any nongeneric collection of points on a translation surface $(X, \omega)$ that does not contain a singularity or a periodic point must contain two points $p$ and $q$ that are identified under a holomorphic map $f: X \ra Y$ where $Y$ is a lower genus Riemann surface with an abelian or quadratic differential that pulls back to the flat structure on $(X, \omega)$. Moreover, if this map does not factor through a map to a translation surface it is degree two. Since $(X, \omega)$ is primitive it is not a torus cover so $f$ must be a degree two map to the sphere. Therefore, $f$ is the quotient by the hyperelliptic involution.
\end{proof}

As one application of the Theorem~\ref{T2}, we classify the nonarithmetic rank two affine invariant submanifolds in $\mathcal{H}(6)$, using degenerations to find genus two nonarithmetic eigenforms with marked points in the boundary. 

\begin{theorem}\label{T:MWE}
There is at most one nonarithmetic rank two orbit closure in $\mathcal{H}(6)$. It has no periodic points.
\end{theorem}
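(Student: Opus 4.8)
The plan is to assume that $\mathcal{M}\subseteq\mathcal{H}(6)$ is a nonarithmetic rank two orbit closure, degenerate it using the Mirzakhani--Wright partial compactification \cite{MirWri}, locate a nonarithmetic genus two eigenform carrying marked points inside the boundary, and then argue that the way the single zero of order six must split under degeneration simultaneously forces $\mathcal{M}$ to be unique and forces $\mathcal{M}$ to have no periodic point. Throughout, the classification of $\GL(2,\bR)$-equivariant point markings in genus two (Theorem~\ref{T2} and Corollary~\ref{C1}) is the engine that controls the marked points appearing in the boundary.

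First I would set up the dimension bookkeeping. Since $\mathcal{H}(6)$ has a unique zero, the natural map from relative to absolute cohomology restricts to an isomorphism on tangent spaces, so every affine invariant submanifold $\mathcal{N}\subseteq\mathcal{H}(6)$ satisfies $\dim_{\bC}\mathcal{N}=2\operatorname{rank}(\mathcal{N})$; in particular $\dim_{\bC}\mathcal{M}=4$. By \cite{MirWri} the boundary of $\mathcal{M}$ has a codimension one component $\mathcal{M}'$ with $\dim_{\bC}\mathcal{M}'=3$; a codimension one degeneration drops the rank by at most one, so $\operatorname{rank}(\mathcal{M}')\ge1$, while $\dim_{\bC}\mathcal{M}'\ge 2\operatorname{rank}(\mathcal{M}')$ forces $\operatorname{rank}(\mathcal{M}')\le1$. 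Hence $\operatorname{rank}(\mathcal{M}')=1$. Using the behaviour of the field of definition under degeneration one may moreover take $\mathcal{M}'$ nonarithmetic. A rank one affine invariant submanifold has its absolute periods governed by an eigenform for real multiplication by a quadratic order, and in the nonarithmetic case the relevant abelian part is a genus two Jacobian with nonarithmetic real multiplication (the torus cover alternative has rational trace field and would make $\mathcal{M}$ arithmetic). Iterating the degeneration if necessary, one arrives at a nonarithmetic genus two eigenform locus $\Omega E_D$, $D$ a non-square, together with a finite $\GL(2,\bR)$-equivariant collection $B$ of marked points on the genus two surface, namely the images of the nodes created when the genus four surface pinched; $\mathcal{M}'$, and hence $\mathcal{M}$ near it, is recovered from this data by plumbing.

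Next I would constrain $B$ and carry out the reconstruction. Because $B$ varies $\GL(2,\bR)$-equivariantly over $\Omega E_D$, it is a nongeneric collection, so by Corollary~\ref{C1} each point of $B$ is a zero of the eigenform, a Weierstrass point, a golden point, or lies in a hyperelliptic-conjugate pair, and a golden point can occur only when $D=5$. The core computation is then to reconstruct $(X,\omega)\in\mathcal{H}(6)$ from the plumbing data: one tracks the cone angle $14\pi$ at the single zero as it distributes among the pinched pieces and the points of $B$, together with the Euler characteristic and spin/parity constraints that come from reassembling a genus four surface with one zero of order six from a genus two piece and the complementary topology. The assertion to be proved is that the only configuration that closes up is $\Omega E_5$ equipped with a collection $B$ that necessarily involves the golden points, glued by an essentially unique combinatorial pattern; as $\mathcal{M}$ must then agree near its boundary with the resulting four-dimensional plumbing family, $\mathcal{M}$ is unique. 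I expect this reconstruction to be the main obstacle: showing that Weierstrass points and hyperelliptic pairs alone cannot assemble a single zero of order six, so that a golden point, and hence $D=5$, is forced, and checking that the admissible gluing is unique up to the ambiguity already present among the connected components of $\mathcal{H}(6)$; one must also run this analysis over every boundary component rather than a single one.

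Finally, for the absence of periodic points, suppose some $(X,\omega)\in\mathcal{M}$ carried a periodic point $p$. Then $(X,\omega;p)$ generates an orbit closure $\mathcal{M}^{+}$ with $\dim_{\bC}\mathcal{M}^{+}=4$ and $\operatorname{rank}(\mathcal{M}^{+})=2$. Degenerating $\mathcal{M}^{+}$ as above and following $p$ into the boundary --- it cannot collide with the zero, since a periodic point is by definition never a zero --- yields the same genus two eigenform $\Omega E_5$, now carrying the golden points of the reconstruction together with one further $\GL(2,\bR)$-equivariant marked point coming from $p$. By Theorem~\ref{T2} that extra point is a Weierstrass point or a golden point, and in either case the cone angle and Euler characteristic count of the previous paragraph leaves no room for an additional marked point when the zero of order six is reassembled. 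This contradiction shows that $\mathcal{M}$ has no periodic points.
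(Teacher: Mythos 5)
Your high-level strategy --- degenerate $\M$ via \cite{MirWri}, land on a nonarithmetic genus two eigenform with equivariant marked points, invoke Theorem~\ref{T2} to force the golden eigenform locus and the golden point, and then reconstruct $\M$ from the boundary --- is indeed the skeleton of the paper's argument. But the two steps that carry all the weight are missing or wrong. First, the reconstruction/uniqueness step, which you yourself flag as ``the main obstacle,'' is not something that cone-angle, Euler-characteristic, and spin bookkeeping of a plumbing can deliver: knowing that the boundary is $\Omega E_5$ with a golden point marked does not determine how the degenerate surface opens up into $\mathcal{H}(6)$, and there are a priori many cylinder configurations compatible with a single zero of order six. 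The paper gets uniqueness only by producing a carefully chosen horizontally \emph{and} vertically periodic prototype surface (Lemma~\ref{L:proto}), proving and using the Similar Cylinder Lemma (Lemma~\ref{L:SCL}, which needs that $\M$ has no rel), exploiting that each equivalence class has exactly two cylinders (\cite{Wcyl}, Theorem 1.9), the cylinder proportion theorem \cite{NW}, and then running a genuine case analysis of saddle-connection configurations (Lemmas~\ref{L:sc2}--\ref{L:two-disjoint}, Figures~\ref{F:cyl2}--\ref{F:final-form}) to write the surface down explicitly as the family $\M_0$. Relatedly, your claim that one can always find a three-dimensional boundary component lying over a connected genus two eigenform (rather than a disconnected, lower-genus, or arithmetic degeneration) is exactly what the paper's choice of which equivalence class to collapse, together with Lemma~\ref{L:collapse}, is engineered to guarantee; ``iterating the degeneration if necessary'' does not substitute for this.

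Second, the no-periodic-points argument does not work as stated. There is no ``room'' constraint: a marked point never affects cone angles or Euler characteristics, so following $p$ into the boundary only tells you (via Theorem~\ref{T2}) that its limit is a Weierstrass point or golden point of $\Omega E_5$, which is not a contradiction --- the golden point does extend to the boundary of $\M_0$, as the paper's own Lemma~\ref{L:golden-collapse} shows. What is actually needed is an argument on the interior: the paper uses \cite[Lemma 7.1]{Apisa-Wright} to pin down the finitely many candidate periodic points on the explicit surface of Figure~\ref{F:MWE}, and then applies standard shears realizing full Dehn twists in the cylinder equivalence classes to move these candidates to points that cannot be periodic. Without an explicit description of $\M_0$ (i.e.\ without having completed the first step) and without such a twisting argument, the contradiction you assert does not materialize.
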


\noindent The definition of the orbit closure and the proof of Theorem~\ref{T:MWE} will be given in Section~\ref{S:MWE}. This orbit closure will be shown to be a rank two nonarithmetic affine invariant submanifold in forthcoming work of Eskin-McMullen-Mukamel-Wright.

As another application of Theorem~\ref{T2} we compute holomorphic sections of the universal curve restricted to genus two Hilbert modular varieties. 

The locus of principally polarized abelian varieties with real multiplication by an order in a totally real number field - or Hilbert modular variety - has an explicit description as the quotient of $g$ copies of the hyperbolic plane by a lattice (for details see van der Geer~\cite{vdG}). Let $X_D$ be the locus of abelian surfaces with real multiplication by the order $\mathcal{O}_D = \Z [x] / (x^2 + bx + c)$ where $b^2 - 4c = D$. Let $E_D$ be the collection of Riemann surfaces whose Jacobian belong to $X_D$ and let $\Omega E_D$ be the collection of abelian differentials that are $\mathcal{O}_D$-eigenforms on a surface in $E_D$. The golden eigenform locus is defined to be the intersection of $\Omega E_5$ and $\mathcal{H}(1,1)$. 

For any Riemann surface in $E_D$ there are two holomorphic one-forms that are eigenforms of real-multiplication. The collection of (at most four) points where some eigenform vanishes forms a holomorphically varying family of points over $E_D$. Another such collection is the set of Weierstrass points. The question we take up here is whether there are any other such families of points.

Given a finite index subgroup of the mapping class group, let $E_D(\Gamma)$ be the preimage of $E_D$ under the finite orbifold cover of moduli space prescribed by $\Gamma$. If $\Gamma$ is torsion free, then the universal curve on the cover restricts to a holomorphic surface bundle $\pi: C_D(\Gamma) \ra E_D(\Gamma)$.  We will assume in the sequel that $D$ is not a perfect square. 

\begin{theorem}\label{T1}
Let $\Gamma$ be a torsionfree finite index subgroup of the mapping class group. There is a holomorphic section of $\pi$ over some component of $E_D(\Gamma)$ that does not exclusively mark zeros of the eigenforms or Weierstrass points only if $D = 5$.
\end{theorem}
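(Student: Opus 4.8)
The plan is to turn the holomorphic section into a $\GL(2, \bR)$-equivariant point marking of an affine invariant submanifold and then quote Theorem~\ref{T2}. Since $D$ is not a perfect square, every Riemann surface in $E_D$ is primitive, so Theorem~\ref{T2} applies. First I would pass to the eigenform locus: write $\mathcal{E}_D = \Omega E_D \cap \mathcal{H}(1,1)$ for the locus where the eigenforms have simple zeros; it dominates $E_D$, whereas $\Omega W_D = \Omega E_D \cap \mathcal{H}(2)$, where an eigenform acquires a double zero, is a union of primitive genus two Teichm\"uller curves lying over a divisor $W_D \subset E_D$. Given a holomorphic section $s$ of $\pi$ over a component of $E_D(\Gamma)$, pulling back along $\mathcal{E}_D \to E_D$ and passing to suitable finite covers produces a holomorphically varying point $p(X) \in X$ over a component $\mathcal{M}$ of $\mathcal{E}_D$, and likewise over each Teichm\"uller curve in $\Omega W_D$. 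Because $p(X)$ depends only on the underlying Riemann surface, it is unchanged by rescaling or rotating the eigenform; so, inside the stratum of surfaces in $\mathcal{H}(1,1)$ carrying one extra marked point, the graph $\mathcal{N}_0 = \{(X, \omega; p(X)) : (X, \omega) \in \mathcal{M}\}$ is a complex submanifold invariant under the rotation subgroup of $\GL(2, \bR)$.

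Next I would suppose $p(X)$ is not a zero of the eigenform, and let $\mathcal{N}$ be the $\GL(2, \bR)$-orbit closure of $\mathcal{N}_0$. By Eskin--Mirzakhani--Mohammadi~\cite{EMM} this is an affine invariant submanifold, and forgetting the marked point sends it onto $\mathcal{M}$, which is a primitive rank two orbit closure by McMullen's classification~\cite{Mc}. Hence $\dim_{\bC} \mathcal{N}$ is either $\dim_{\bC} \mathcal{M}$, in which case $p(X)$ is a periodic point and $\mathcal{N}_0$ is $\GL(2, \bR)$-invariant, or $\dim_{\bC} \mathcal{M} + 1$, in which case the marked point is ``free''. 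Ruling out the second possibility is the crux, and it is not formal: because the $\GL(2, \bR)$-action is not holomorphic, invariance under rotation together with complex-analyticity of $\mathcal{N}_0$ does not by itself give $\GL(2, \bR)$-invariance. I would handle it by combining the structure theory for holomorphically varying points over affine invariant submanifolds of Apisa--Wright~\cite{Apisa-Wright} with degeneration via the Mirzakhani--Wright partial compactification~\cite{MirWri} to loci of genus one surfaces with marked points, where a holomorphic section is forced to mark a torsion point for classical reasons, together with M\"oller's theorem~\cite{M2} over the Teichm\"uller curves in $\Omega W_D$. The upshot is that $\dim_{\bC} \mathcal{N} = \dim_{\bC} \mathcal{M}$, so $p(X)$ is a periodic point and $s$ induces a $\GL(2, \bR)$-equivariant point marking of $\mathcal{M}$.

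Theorem~\ref{T2} then says that a periodic point on a primitive genus two translation surface is a Weierstrass point unless the orbit closure is the golden eigenform locus $\Omega E_5 \cap \mathcal{H}(1,1)$, that is, unless $D = 5$. So for $D \neq 5$ the image of $s$ is contained in the union of the zeros of the two eigenforms and the Weierstrass points, which is the conclusion of Theorem~\ref{T1}. The only genuinely non-formal point is the second paragraph --- showing that the holomorphic section marks a periodic point (or a zero) rather than a free one; once this is established, Theorem~\ref{T2} finishes the argument immediately, and the remaining steps are bookkeeping about covers, components, and continuity.
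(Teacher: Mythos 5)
Your overall skeleton is the same as the paper's: convert the section into a point marking of the eigenform locus, argue that the marked point must be a periodic point or a zero, and then invoke Theorem~\ref{T2} to force $D=5$. The problem is that the middle step --- the one you yourself flag as ``the crux'' and ``not formal'' --- is never actually carried out. You state the dichotomy ($\dim_{\bC}\mathcal{N}=\dim_{\bC}\mathcal{M}$ versus $\dim_{\bC}\mathcal{M}+1$), correctly observe that rotation-invariance plus holomorphicity of the graph $\mathcal{N}_0$ does not yield $\GL(2,\bR)$-invariance, and then say only that you ``would handle it by combining'' the Apisa--Wright structure theory, Mirzakhani--Wright degenerations to genus one loci ``where a holomorphic section is forced to mark a torsion point for classical reasons,'' and M\"oller's theorem over the Teichm\"uller curves. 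That is a list of tools, not an argument: nothing in the proposal explains how a degeneration of a holomorphically varying point over $E_D(\Gamma)$ produces a holomorphic section over a genus one locus to which a classical torsion argument applies, nor how information over the boundary and over $\Omega W_D$ propagates back to rule out a free marked point on the open dense part of $\mathcal{E}_D$. Since this is exactly the non-formal content of the theorem, the proposal has a genuine gap at its central step.

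For comparison, the paper does not reprove this step at all: it quotes the author's earlier result (Apisa~\cite{Apisa-mp1}) that holomorphic sections of $C_D(\Gamma)\ra E_D(\Gamma)$ must mark periodic points or zeros of an eigenform $(X,\omega)$, and then Theorem~\ref{T2} finishes in one line. So your reduction and your use of Theorem~\ref{T2} match the intended argument, but to make the proposal complete you must either cite that prior result or supply an actual proof that a holomorphically varying non-zero marked point over a component of the eigenform locus is periodic --- the sketched degeneration strategy, as written, does not do this.
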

\begin{proof}[Proof of Theorem~\ref{T1} given Theorem~\ref{T2}]:
Suppose that $\pi$ admits a section over a component of $E_D(\Gamma)$ and let $X$ be a Riemann surface in that component. Let $\omega$ be an $\mathcal{O}_D$-eigenform on $X$.  By Apisa~\cite{Apisa-mp1} holomorphic sections of $C_D(\Gamma) \ra E_D(\Gamma)$ must mark periodic points or zeros of $(X, \omega)$. By Theorem~\ref{T2}, periodic points that are not Weierstrass points only exist if $D = 5$ and the points are the golden points. 
\end{proof}


Finally, we will show that Theorem~\ref{T2} resolves the genus two finite blocking problem. Given a translation surface $(X, \omega)$ and two points $p$ and $q$ (that are not necessarily distinct and that may coincide with zeros of $\omega$) the finite blocking problem asks whether there is a finite collection of points $B$ on $(X, \omega) - \{p, q \}$ so that all straight lines between $p$ and $q$ must pass through $B$. The analogous problem on a polygon $T$ is whether given two point $p$ and $q$ there is a finite collection of points $B$ in $T - \{p, q\}$ so that all billiard shots from $p$ to $q$ pass through $B$. By unfolding the table $T$ to an abelian differential $(X, \omega)$ and letting $P$ and $Q$ be the preimages of $p$ and $q$ respectively, the finite blocking problem in polygonal biliards reduces to the question of whether all points in $P$ are finitely blocked from all points in $Q$ on $(X, \omega)$.

Leli\`evre, Monteil, and Weiss~\cite{LMW} showed that classifying invariant measures in $\mathcal{H}(2,0^2)$ and $\mathcal{H}(1,1,0^2)$ is related to solving the finite blocking problem. Such a classification is equivalent to Corollary~\ref{C1} and yields the following:

\begin{theorem}\label{T3}
A singular point on a primitive genus two translation surface is finitely blocked from no other point and a nonsingular point is only finitely blocked from its image under the hyperelliptic involution (and the blocking set is a subset of the collection of Weierstrass points).
\end{theorem}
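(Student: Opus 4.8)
The plan is to derive Theorem~\ref{T3} from Corollary~\ref{C1} by way of the reduction of Leli\`evre--Monteil--Weiss~\cite{LMW}, splitting the statement into an \emph{existence} half --- a nonsingular point $p$ really is finitely blocked from its hyperelliptic image, with blocking set among the Weierstrass points --- and a \emph{rigidity} half --- nothing else on a primitive genus two surface is finitely blocked. Throughout let $\iota$ denote the hyperelliptic involution.

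For the existence half I would argue directly, using only that $\iota^{*}\omega=-\omega$, so that $\iota$ is an isometry of the flat metric acting as $z\mapsto -z$ in flat charts, with fixed locus the six Weierstrass points. If $p$ is nonsingular and $\gamma\colon[0,L]\to X$ is a geodesic segment with $\gamma(0)=p$, $\gamma(L)=\iota(p)$, and no singularity in its interior, then $t\mapsto\iota(\gamma(L-t))$ is again a geodesic segment issuing from $p$ in the same direction and of the same length, hence --- by uniqueness of such a segment at a nonsingular point --- equal to $\gamma$. Therefore $\iota(\gamma(L/2))=\gamma(L/2)$, so the midpoint of every connecting geodesic is a Weierstrass point, distinct from $p$ once one decomposes a self-intersecting loop into simple sub-loops. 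Thus $p$ is finitely blocked from $\iota(p)$ by (a subset of) the Weierstrass points, the self-blocking case being $p$ itself a Weierstrass point.

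For the rigidity half I would invoke~\cite{LMW}: finite blocking of $p$ from $q$ by a finite set $B$ forces $(X,\omega;p,q,B)$, viewed in the appropriate stratum with marked points, into a proper affine invariant submanifold $\mathcal{M}$ on which the blocking persists --- blocking being a closed, $\GL(2,\R)$-invariant condition compatible with passage to the Mirzakhani--Wright boundary~\cite{MirWri}. By Corollary~\ref{C1}, hence by Theorem~\ref{T2}, such an $\mathcal{M}$ is obtained from the orbit closure of $(X,\omega)$ by marking only zeros, periodic points (Weierstrass or golden), and hyperelliptic pairs, which leaves a short list of candidate configurations. For every candidate other than ``$p$ nonsingular and $q=\iota(p)$'' --- for instance $q$ a Weierstrass or golden point not equal to $\iota(p)$, or $p$ a higher-order zero, where the symmetry argument above breaks because geodesics leave a cone point along several distinct prongs --- I would reach a contradiction with the persistence of blocking by exhibiting, on a suitably generic surface of $\mathcal{M}$, a connecting geodesic avoiding $B$: unique ergodicity of the straight-line flow in Lebesgue-almost every direction lets one join $p$ to $q$ past a prescribed finite set once the points, or the surface within its orbit closure, are put in general enough position. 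This leaves exactly the configurations already covered by the existence half, and since there the Weierstrass points block, Theorem~\ref{T3} follows.

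The step I expect to be the main obstacle is the enumeration in the rigidity argument. The difficulty is that a periodic endpoint --- a Weierstrass point, say --- automatically renders $\{p,q\}$ nongeneric, so Corollary~\ref{C1} by itself does not forbid blocking in such cases; one must additionally show, and this is precisely where the analysis of~\cite{LMW} on $\mathcal{H}(2,0^{2})$ and $\mathcal{H}(1,1,0^{2})$ (and the strata obtained by adjoining the blocking set as further marked points) is needed, that on the surviving invariant submanifolds blocking genuinely forces the hyperelliptic relation $q=\iota(p)$. Once that is in hand, the equidistribution argument and the failure of geodesic uniqueness at cone points dispose of the remaining cases and pin the blocking set to the Weierstrass points.
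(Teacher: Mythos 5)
Your reduction step is essentially the paper's: after the dense-orbit case (already done in~\cite{Apisa-mp1}) one uses the Apisa--Wright/Leli\`evre--Monteil--Weiss machinery to conclude that on an eigenform locus the only candidate blocked pairs are hyperelliptic pairs, or pairs of periodic points and zeros with blocking set contained in the periodic points; your symmetry argument for the existence half is also the standard one the paper cites (Remark~\ref{R:two-piers}). The genuine gap is in how you propose to kill the remaining candidates. Once both endpoints are periodic points or zeros, the blocking set $B$ is a \emph{specific} finite set (the periodic points), and you must exhibit a connecting geodesic missing it. Your mechanism --- ``unique ergodicity in Lebesgue-almost every direction lets one join $p$ to $q$ past a prescribed finite set on a suitably generic surface'' --- does not work: the directions in which a straight segment from $p$ actually reaches $q$ form a countable (measure-zero) set, so almost-every-direction statements say nothing about the existence of a connecting segment avoiding $B$. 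Worse, no soft genericity principle can decide these cases, because some of them really \emph{are} blocked: a nonsingular Weierstrass point is finitely blocked from itself by the other Weierstrass points, and hyperelliptic pairs are blocked by Weierstrass points, so the dichotomy among pairs of periodic points is a delicate flat-geometric fact, not a transversality statement.

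The paper settles exactly this step by concrete geometry: for $\mathcal{H}(1,1)$ and for most discriminants in $\mathcal{H}(2)$ it uses the Leli\`evre--Weiss convex $2n$-gon presentations, where the nonsingular Weierstrass points sit at edge midpoints and the center and non-blocking is visible (Lemma~\ref{L:convex}), together with a separate explicit analysis of the golden points on the decagon picture; for the exceptional discriminants $D=5,12,17,21,32,41_0,45,77$ it uses McMullen's prototypes and the action of the horizontal and vertical Dehn multi-twists in the Veech group on the five nonsingular Weierstrass points, viewed as permutations in $\mathrm{Sym}(5)$, choosing triples $(b,c,e)$ with the right parities so that the generated group moves any Weierstrass point to one that visibly illuminates the others ($D=41_0$ needing its own prototype and explicit sight lines). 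You correctly flagged the enumeration as the main obstacle, but the proposal supplies no valid mechanism for it, and the mechanism you do suggest would, if it worked, also ``disprove'' the true blocked configurations; so as written the rigidity half does not go through.
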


Note that Leli\`evre, Monteil, and Weiss~\cite[Theorem 2]{LMW} implies that for any point $p$ on a primitive genus two translation surface, $p$ is finitely blocked from only finitely many points. Theorem~\ref{T3} determines exactly what that finite set is, i.e. empty if $p$ is a zero and a singleton containing the image of $p$ under the hyperelliptic involution otherwise. The strategy of this proof, which appears in Section~\ref{S:FBP2}, is to use the ideas of Leli\`evre, Monteil, and Weiss~\cite{LMW} alluded to above along with constraints on the geometry of eigenforms coming from McMullen's prototype surfaces in~\cite{McM:spin} and work of Leli\`evre and Weiss~\cite{LW} on convex representations of eigenforms.

To apply Theorem~\ref{T3} to polygons, we devise the following criterion:

\begin{theorem}\label{T:criterion}
A holomorphic $k$-differential on the sphere canonically unfolds to a hyperelliptic abelian differential only if one of the following two conditions holds:
\begin{enumerate}
\item All but at most two of the singularities have cone angle an integer multiple of $\frac{\pi}{2}$
\item The cone angles of the singularities, listed with multiplicity, are either $\left( a,a,b,b \right) 2\pi$ or $\left( a,a,b \right) 2\pi$ where $a$ and $b$ are rational.
\end{enumerate}
\end{theorem}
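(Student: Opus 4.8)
The plan is to analyze the canonical unfolding (double cover) construction for $k$-differentials on $\CP$ and track what the cone angles of the quadratic/abelian differential upstairs must be. Recall that a meromorphic $k$-differential $q$ on a Riemann surface $Y$ canonically unfolds to an abelian differential by first passing to the minimal cover on which $q$ becomes a $k$-th power of a one-form — when $Y = \CP$ this is a cyclic cover branched at the singularities of $q$ — and that a singularity of cone angle $2\pi\alpha$ pulls back to singularities of cone angle dictated by the local monodromy. I would first reduce to the case $k=2$: if the canonically unfolded surface $(X,\omega)$ is to be hyperelliptic of genus two it must lie in $\mathcal{H}(2)$ or $\mathcal{H}(1,1)$ (or their strata with marked points), hence $(X,\omega)/\langle \iota \rangle$ is $(\CP, q)$ for a quadratic differential $q$ with a controlled pole/zero profile, and the original $k$-differential must be pulled back from this $q$. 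So the substantive statement is about which quadratic differentials on the sphere unfold (via the standard orientation double cover) to a surface in $\mathcal{H}(2)$ or $\mathcal{H}(1,1)$, possibly with marked points.

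Next I would enumerate, for the orientation double cover $\pi\colon X \to \CP$ branched over the odd-order singularities of $q$, the constraint imposed by genus two: by Riemann–Hurwitz, $2\cdot 2 - 2 = 2(2\cdot 0 - 2) + (\text{number of branch points})$, so $q$ has exactly $6$ odd-order singularities on $\CP$. The zeros of $\omega$ and their orders are then determined by the local orders of $q$: an odd-order zero/pole of order $m$ of $q$ lifts to a single zero of $\omega$ of order $(m+1) - 1 = m+1$ wait — more carefully, of order $(m+2)/ \ldots$; I would compute these local contributions and impose that the total zero profile of $\omega$ is $(2)$, $(1,1)$, or one of these with extra marked (order-zero) points. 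Running through the finitely many solutions in nonnegative (and $\geq -1$) integers for the singularity orders of $q$ subject to the Gauss–Bonnet relation $\sum m_i = -4$ on $\CP$, I expect exactly the two families in the statement to survive: the case where $q$ has at most two singularities of odd order among those contributing nontrivially — equivalently all but at most two singularities have cone angle in $\pi\Z$, i.e. even order, which is condition~(1) — and the sporadic configurations with four or three distinguished singularities whose lifted cone angles force the $(a,a,b,b)$ or $(a,a,b)$ pattern, which is condition~(2). For the $k$-differential with $k>2$ one then checks that the extra cyclic layer of the cover forces the same two patterns, since the sphere has no further room: the intermediate cyclic cover must already be $\CP$ for the genus to stay at two.

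The main obstacle I anticipate is the bookkeeping of cone angles through the (possibly two-step) canonical unfolding when $k > 2$ and keeping careful track of which singularities are genuine zeros of $\omega$ versus regular marked points, since the theorem is stated for translation surfaces with marked points and "canonically unfolds to a hyperelliptic abelian differential" must be interpreted to allow marked points. Concretely, a cone point of angle $2\pi a$ with $a = p/q'$ rational contributes, after unfolding, cone points whose angles are controlled by how $q'$ interacts with $k$ and with the ramification of the cover; I would organize this via the local monodromy representation and the Hurwitz formula applied to each branch of the cover, and the delicate point is ruling out the many a priori possible "mixed" configurations — showing that anything outside (1) and (2) produces genus $\geq 3$ or a non-hyperelliptic surface. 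Once the cone-angle arithmetic is pinned down, the enumeration is finite and the two listed cases are exactly the survivors; this will be carried out in detail in Section~\ref{S:FBP2}.
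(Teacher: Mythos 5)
There is a genuine gap, and it starts with a misreading of the statement. Theorem~\ref{T:criterion} concerns $k$-differentials on the sphere whose canonical unfolding is a hyperelliptic abelian differential of \emph{any} genus; the specialization to genus two (and hence to $\mathcal{H}(2)$ and $\mathcal{H}(1,1)$) only happens afterwards, in Corollaries~\ref{C:d-hyp} and~\ref{C:d-hyp2}, where the billiard list is extracted. Your Riemann--Hurwitz count forcing exactly six odd-order singularities, and the claim that the substantive question is which quadratic differentials unfold into $\mathcal{H}(2)$ or $\mathcal{H}(1,1)$, are therefore answering a different (and strictly weaker) statement than the one at hand; a finite enumeration of genus-two zero profiles cannot yield the general criterion.

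The deeper problem is the ``reduce to $k=2$'' step itself. You assume that the canonical cyclic cover $f\colon (X,\omega)\to P$ factors through the hyperelliptic quotient $J\colon (X,\omega)\to(\C\mathrm{P}^1,q)$, i.e.\ that the $k$-differential and $q$ live on the same tower. That is true exactly when the hyperelliptic involution $\iota$ lies in the deck group of $f$, and this is precisely the dichotomy that produces the two conditions of the theorem: condition~(1) is the case $\iota\in\mathrm{Deck}(f)$ (Lemma~\ref{L:d-hyp}, where $f$ factors as $z\mapsto z^{d/2}$ after $J$ and all but the two singularities over $0,\infty$ acquire half-integer angles), while condition~(2) is the case $\iota\notin\mathrm{Deck}(f)$ (Lemma~\ref{L:d-hyp2}), where $f$ and $J$ are incomparable quotients and one must instead pass to the common quotient $2d$-differential $P'$ and use that the induced holomorphic involution of the sphere $P$ has exactly two fixed points, which bounds the branch locus of $f$, hence the singularities of $P$, by four and forces the paired pattern $(a,a,b,b)$ or $(a,a,b)$. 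Your proposal never engages with this case: the $(a,a,b,b)$/$(a,a,b)$ configurations are asserted to ``survive an enumeration'' rather than derived, and no mechanism is given that would rule out other configurations for arbitrary genus or detect when the hyperelliptic involution fails to descend through the $k$-differential quotient. Without that structural analysis the proof does not go through, even though the local cone-angle bookkeeping you outline is the right kind of computation within each case.
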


\noindent Theorem~\ref{T:criterion} is proved in Section~\ref{S:HyperBilliards} and necessary and sufficient conditions for the converse to hold are also established.



\begin{figure}[h!]
\begin{tabular}{m{3cm}m{1.5cm}m{6cm}m{2.5cm}}
\toprule
Polygon, $\theta =$ & $n=$ & Blocking & $\M(\theta)$ \\
\midrule
$\left( \frac{3}{2}, \left( \frac{1}{2} \right)^5 \right)\pi$ & $-$ & 	\begin{tikzpicture}
		\draw (0,0) -- (0,2) -- (1,2) -- (1,1) -- (2,1) -- (2,0) -- (0,0);
		\draw[dotted] (0,1) -- (1,1) -- (1,0);
		\node at (-.25, .5) {$y_1$}; \node at (-.25, 1.5) {$y_2$};
		\node at (.5, -.25) {$x_1$}; \node at (1.5, -.25) {$x_2$};
		\draw[black] (0,0) circle[radius=3pt];
		\draw[black] (0,2) circle[radius=3pt];
		\draw[black] (1,2) circle[radius=3pt];
		\draw[black] (2,1) circle[radius=3pt];
		\draw[black] (2,0) circle[radius=3pt];
	\end{tikzpicture} & $\strata(2)$ \\ 
$\left( \frac{1}{n}, \frac{n-1}{n}, \frac{1}{2}, \frac{1}{2} \right)\pi$ & $3,4$ & \begin{tikzpicture}
		\draw (0,0) -- (0, 1.7) -- (-1,1.7) -- (-2.5, 0) -- (0,0);
		\draw[dotted] (-1,1.7) -- (-1,0);
		\node at (-.5, -.25) {$x_2$}; \node at (-1.6, -.25) {$x_1$};
		\draw[black] (-2.5,0) circle[radius=3pt];
	\end{tikzpicture} \hspace{1cm} \begin{tikzpicture}
		\draw (0,0) -- (0, 1.7) -- (.5, 2.2) -- (2,0) -- (0,0);
		\draw[dotted] (0, 1.7) -- (.85, 1.7) -- (.85,0);
		\node at (.4, -.25) {$x_1$}; \node at (1.4, .-.25) {$x_2$};
		\draw[black] (2,0) circle[radius=3pt];
	\end{tikzpicture} & $\strata(1,1)$, $\strata(2)$ \\
$\left( \frac{1}{n}, \frac{1}{n}, \frac{n-1}{n}, \frac{n-1}{n} \right)\pi$ & $3$ & \begin{tikzpicture}
		\draw (0,0) -- ( .5, .85) -- (1.5, .85) -- (2,0) -- (0,0);
		\draw[dotted] (.5, .85) -- (.5, 0);
		\draw[dotted] (1.5, .85) -- (1.5, 0);
		\draw[black, fill] (1,.85) circle[radius=1.5pt];
		\draw[black, fill] (1,0) circle[radius=1.5pt];
		\node at (1, -.25) {$x_2$}; \node at (.25, -.25) {$x_1$};
		\draw[black] (0,0) circle[radius=3pt];
		\draw[black] (2,0) circle[radius=3pt];
	\end{tikzpicture} \hspace{1cm}  \begin{tikzpicture}
		\draw (0,0) -- ( .5, .85) -- (1.5, .85) -- (1,0) -- (0,0);
		\draw[dotted] (.5, .85) -- (.5, 0);
		\draw[dotted] (1, .85) -- (1, 0);
		\draw[black, fill] (.75,.425) circle[radius=1.5pt];
		\node at (.85, -.25) {$x_2$}; \node at (.25, -.25) {$x_1$};
		\draw[black] (0,0) circle[radius=3pt];
		\draw[black] (1.5,.85) circle[radius=3pt];
	\end{tikzpicture} & $\strata(1,1)$ \\
$\left( \frac{1}{2n}, \frac{n-1}{2n}, \frac{1}{2} \right)\pi$ & $4,5$ & \begin{tikzpicture}
		\draw (0,0) -- (2.5, 1) -- (2.5,0) -- (0,0);
		\draw[black] (0,0) circle[radius=3pt];
		\end{tikzpicture}  & Regular $2n$-gon locus \\
$\left( \frac{2}{2n}, \frac{n-2}{2n}, \frac{1}{2} \right)\pi$ & $5$ & \begin{tikzpicture}
		\draw (0,0) -- (2.5, 1) -- (2.5,0) -- (0,0);
		\end{tikzpicture} & Double pentagon locus  \\
$\left( \frac{1}{n}, \frac{1}{n}, \frac{n-2}{n} \right)\pi$ & $5,6$ & \begin{tikzpicture}
		\draw (0,0) -- (2.5, 1) -- (5,0) -- (0,0);
		\draw[black] (0,0) circle[radius=3pt];
		\draw[black] (5,0) circle[radius=3pt];
		\draw[black, fill] (2.5,0) circle[radius=3pt];
		\end{tikzpicture} & Double regular $n$-gon locus \\
$\left( \frac{2}{n}, \frac{2}{n}, \frac{n-4}{n} \right)\pi$ & $5$ & \begin{tikzpicture}
		\draw (0,0) -- (2.5, 1) -- (5,0) -- (0,0);
		\draw[black] (2.5,1) circle[radius=3pt];
		\draw[black, fill] (2.5,0) circle[radius=3pt];
		\end{tikzpicture} & Decagon locus \\
\bottomrule
\end{tabular}
\caption{Polygons that unfold to genus two translation surfaces}
\label{F:table}
\end{figure}

Given $\theta$, $\M(\theta)$ is the $\GL(2, \R)$ orbit closure of the unfolding of the generic polygon with rational angles $\theta$. Theorem~\ref{T:criterion} is used to construct a complete list of the billiard tables that unfold to genus two translation surfaces, and, along with Theorem~\ref{T3}, produces a solution to the finite blocking problem for these polygons (see Theorem~\ref{T:final}, which is proved in Section~\ref{S:Genus2}).



\begin{theorem}\label{T:final}
The complete list of rational $k$-gons that unfold to genus two translation surfaces is given in Figure~\ref{F:table}.
\begin{enumerate}
\item For $k = 6$, the surface is a torus cover if and only if $\frac{x_1}{x_2}$ and $\frac{y_1}{y_2}$ are rational.
\item For $k = 4$, the surface is a torus cover if and only if $\frac{x_1}{x_2}$ is rational.
\item For $k = 3$, only the $\left( \frac16, \frac16, \frac23 \right) \pi$ triangle unfolds to a torus cover.
\end{enumerate}
The solution to the finite blocking problem on the $k$-gons that do not unfold to torus covers is the following
\begin{enumerate}
\item For $k = 6$, the circled vertices are blocked from themselves.
\item For $k \ne 6$, if the figure has two circled vertices they are finitely blocked from each other by the solid points (which are midpoints of the faces or edges they are drawn on).
\item For $k \ne 6$, if the figure has one circled vertex it is blocked from itself except for $\theta = \left( \frac13, \frac23, \frac12, \frac12, \frac12 \right) \pi$, which has no pairs of finitely blocked points. The solid points form a blocking set.
\item In all other cases, there are no pairs of finitely blocked points. 
\end{enumerate}
\end{theorem}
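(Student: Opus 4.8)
The plan is to combine Theorem~\ref{T:criterion} (to list the polygons), McMullen's genus two classification (to identify the orbit closures and the torus covers), and Theorem~\ref{T3} (to solve blocking). Recall that the billiard unfolding of a rational $k$-gon $T$ with vertex angles $\theta_1,\dots,\theta_k$ is the canonical unfolding of the double of $T$ across its edges, which underlies a holomorphic differential of some order on $\CP$ with cone angles $2\theta_1,\dots,2\theta_k$. If this unfolding has genus two it lies in $\strata(2)$ or $\strata(1,1)$, so the abelian differential is hyperelliptic and Theorem~\ref{T:criterion} applies to the double. Case (2) there allows exactly three or four cone points, i.e. $T$ is an isosceles triangle with angles $\left(\pi a,\pi a,\pi(1-2a)\right)$ or a quadrilateral with angles $\left(\pi a,\pi a,\pi(1-a),\pi(1-a)\right)$; case (1) forces all but at most two of the $\theta_i$ to be integer multiples of $\tfrac{\pi}{4}$, which with the angle-sum relation $\sum\theta_i=(k-2)\pi$ and $0<\theta_i<2\pi$ reduces each value of $k$ to a one-parameter family. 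For every such family I would then compute the genus of the unfolding by Riemann--Hurwitz applied to the canonical cover $X\ra\CP$, reading the ramification off the orders of the differential at the $2\theta_i$-points and the degree of that cover; this simultaneously bounds $k$ (forcing $k\le 6$ in case (1)), pins the parameter to the small values in Figure~\ref{F:table}, and records the stratum. The converse --- that each listed polygon really does unfold in genus two --- follows by exhibiting the surface directly (the polygons drawn) together with the same genus count, or from the sharp form of Theorem~\ref{T:criterion}; McMullen's classification then identifies $\M(\theta)$ as the full stratum, one of the Veech curves listed in the table, an eigenform locus, or a torus-cover locus.

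For the torus-cover statements (1)--(3), the point is that a genus two translation surface is a torus cover exactly when it is imprimitive, so by McMullen's trichotomy one only has to decide, family by family, when the generic unfolding is imprimitive. For the staircase hexagon $\left(\tfrac32,(\tfrac12)^5\right)\pi$ the unfolding is the $L$-shaped surface carrying the edge parameters $x_1,x_2,y_1,y_2$ of the figure; reading off its horizontal and vertical cylinder decompositions, it is a torus cover precisely when the cylinder moduli are commensurable in each direction, i.e. when $x_1/x_2\in\Q$ and $y_1/y_2\in\Q$, which is (1). For the quadrilateral unfoldings, which lie in $\strata(1,1)$, the same computation leaves only the single ratio $x_1/x_2$, which is (2). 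Among the triangle unfoldings one checks directly which is square-tiled: only $\left(\tfrac16,\tfrac16,\tfrac23\right)\pi$ unfolds to a cover of the hexagonal torus, the others being primitive Veech surfaces, which is (3).

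For finite blocking, fix a base copy of $p$ inside the unfolding $X$; then billiard trajectories from $p$ to $q$ in $T$ correspond to geodesic segments from that copy to the full preimage $Q$ of $q$, and finite blocking sets in $T$ correspond to finite $G$-invariant blocking sets upstairs, so $p$ is finitely blocked from $q$ in $T$ iff the base copy of $p$ is finitely blocked from the set $Q$ in $X$. On the primitive unfoldings Theorem~\ref{T3} then forces the base copy of $p$ to be nonsingular, the set $Q$ to be a single point, and that point to be the hyperelliptic image of the base copy of $p$. Since interior points of $T$ have $|G|$ preimages, both $p$ and $q$ must be vertices with a single preimage each, and $p$ is finitely blocked from $q$ iff those two preimages are exchanged by the hyperelliptic involution $\sigma$; in particular a vertex is blocked from itself exactly when its unique preimage is a nonsingular Weierstrass point. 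Running this through Figure~\ref{F:table} --- determining for each vertex of each listed primitive polygon the number of preimages in $X$, whether a unique preimage is a zero of $\omega$ or a Weierstrass point, and how $\sigma$ pairs the preimages --- produces cases (1)--(4); the blocking set is always the image in $T$ of the complementary Weierstrass points (the midpoints drawn), and the lone exception $\theta=\left(\tfrac13,\tfrac23,\tfrac12,\tfrac12,\tfrac12\right)\pi$ is precisely the case where the candidate vertex has a singular preimage --- a zero of the abelian differential --- which by Theorem~\ref{T3} is finitely blocked from nothing.

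The step I expect to be the main obstacle is the explicit geometry underlying the last two parts: locating the hyperelliptic involution and the six Weierstrass points inside each unfolded surface and tracking how the unfolding group sends them to points of $T$, so that the circled vertices and the midpoint blocking sets come out exactly as in Figure~\ref{F:table}. The enumeration itself, although it requires careful Riemann--Hurwitz bookkeeping and attention to the overlap between cases (1) and (2) of Theorem~\ref{T:criterion}, is essentially mechanical once that theorem and McMullen's classification are available; the genuinely new input is concentrated in Theorems~\ref{T:criterion} and \ref{T3}.
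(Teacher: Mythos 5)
Your enumeration and finite-blocking steps follow the paper's route: completeness of Figure~\ref{F:table} comes from Theorem~\ref{T:criterion} (in the paper via Corollaries~\ref{C:d-hyp} and~\ref{C:d-hyp2}), and your reduction of blocking on the table to blocking on the unfolding --- unique preimages occur only at vertices of angle $\pi/d$, and the special/non-special dichotomy decides whether such a preimage is a nonsingular Weierstrass point or one of a pair exchanged by the hyperelliptic involution --- is exactly the paper's Proposition~\ref{P:FBP2} combined with Theorem~\ref{T3}.

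The torus-cover step is where your argument breaks. The claimed criterion ``torus cover precisely when the cylinder moduli are commensurable in each direction, i.e.\ when $x_1/x_2\in\Q$ and $y_1/y_2\in\Q$'' is false on both counts. For the $L$-shaped unfolding the horizontal moduli are $y_2/x_1$ and $y_1/(x_1+x_2)$, so their commensurability is the condition $y_2(x_1+x_2)/(y_1x_1)\in\Q$, which is not equivalent to $x_1/x_2,\,y_1/y_2\in\Q$; and commensurable moduli in two (or all) periodic directions does not detect torus covers --- every nonarithmetic Veech surface, e.g.\ McMullen's golden $L$ (which lies in this very family), has commensurable moduli in each periodic direction and is primitive. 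The relevant condition is commensurability of periods, and even then the ``only if'' direction is delicate: in the $\strata(1,1)$ families the data in Figure~\ref{F:table} involves a relative period, and being a branched cover of a torus places no constraint on relative periods by itself. The paper closes exactly this gap by citing Apisa--Wright~\cite{Apisa-Wright}: a rational polygon whose unfolding is a torus cover in fact unfolds to a \emph{square-tiled} surface, so the $\Q$-span of all relative periods is two-dimensional and the ratios are forced to be rational; the converse is the observation that rational ratios allow a rescaling putting the periods in the Gaussian or Eisenstein integers. Your sketch contains no substitute for this input, so items (1)--(3) are not established as written. A smaller omission: the $\M(\theta)$ column of Figure~\ref{F:table} is not ``identified by McMullen's classification'' on its own; the paper pins it down using the trivial rank bound of Mirzakhani--Wright~\cite{MirWriRank} (an angle that is not a multiple of $\pi/2$ forces rank at least $|\theta|-2$, hence the full stratum in genus two).
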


Notice that for rational polygons that unfold to torus covers any two points are finitely blocked from each other. Therefore, Theorem~\ref{T:final} solves the finite blocking problem for rational polygons that unfold to genus two translation surfaces.

\bold{Acknowledgements.} The author thanks Alex Eskin, Alex Wright, and David Aulicino for extensive, detailed, and helpful conversations throughout the process of writing this paper. This material is based upon work supported by the National Science Foundation Graduate Research Fellowship Program under Grant No. DGE-1144082.

%
%

\section{Proof of Theorem~\ref{T2} - Periodic Points in Genus Two}\label{S:PT2}

Theorem~\ref{T2} has been established by M\"oller~\cite{M2} in the case of Teichm\"uller curves in genus two and by the author~\cite{Apisa-mp1} for strata of genus two abelian differentials. It remains to classify periodic points on nonarithmetic eigenform loci in $\mathcal{H}(1,1)$.

\begin{ass}\label{A:M-nonarithmetic}
$\M$ is a nonarithmetic eigenform locus in $\mathcal{H}(1,1)$.
\end{ass}

In every nonarithmetic eigenform locus $\mathcal{H}(1,1)$ there is a translation surface with three horizontal cylinders (this follows from Wright~\cite[Theorem 1.10]{Wcyl} by letting $(X, \omega)$ be a translation surface in $\M$ where the twist space coincides with the cylinder preserving space in the horizontal direction; it is also implicit in the work of McMullen~\cite[Section 7]{Mc5}). The horizontal cylinders will be glued together as in Figure~\ref{F:starter}. The specific horizontally and vertically periodic translation surface in Figure~\ref{F:starter} with the lengths of the horizontal and vertical saddle connections labelled as in the figure will be called the golden tetromino. We will let $\phi$ denote the golden ratio and $\zeta$ a vertical saddle connection contained in the topmost cylinder.

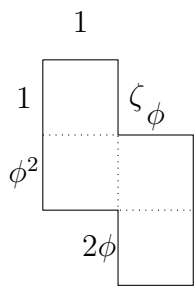
\begin{figure}[h]
\centering
	\begin{tikzpicture}
		\draw (0,1) -- (1,1) -- (1,0) -- (2,0) -- (2,-2) -- (1,-2) -- (1,-1) -- (0,-1) -- (0,1);
		\draw[dotted] (0,0) -- (1,0) -- (1,-1) -- (2,-1);
		\node at (1.25, .5) {$\zeta$}; 
		\node at (-.25, .5) {$1$};
		\node at (-.25, -.5) {$\phi^2$};
		\node at (.75, -1.5) {$2\phi$};
		\node at (.5, 1.5) {$1$};
		\node at (1.5, .25) {$\phi$};
	\end{tikzpicture}
\caption{The golden tetromino. Opposite sides identified.}
\label{F:starter}
\end{figure}

\begin{ass}\label{A:M-proto}
Fix a horizontally periodic translation surface $(X, \omega)$ with dense $\GL(2, \R)$ orbit in $\M$. Suppose that the horizontal cylinders are connected as in Figure~\ref{F:starter}.  Suppose too that $p$ is a periodic point that is not a Weierstrass point on $(X, \omega)$. Let $\M'$ be the orbit closure of $(X, \omega; p)$ in $\mathcal{H}(1,1,0)$. 
\end{ass}

Our goal is to show that $\M$ is the golden eigenform locus and that $p$ is one of the two golden points (which will be defined shortly). 

Label the horizontal cylinders of $(X, \omega)$ from top to bottom as $C_1$, $C_2$, and $C_3$. The relative deformation that only alters the imaginary parts of periods is $\rho  = -i \left( \gamma_1^* - \gamma_2^* + \gamma_3^* \right)$ where $\gamma_i$ is the core curve of $C_i$ traveling left to right. Let $t \rho \cdot (X, \omega)$ be the time $t$ flow along $\rho$ applied to $(X, \omega)$ (see Figure~\ref{fig:deformed-cyl}). This will be called the $\rho$-flow or the $\rho$-orbit of $(X, \omega)$. The portion of the $\rho$-flow when $t$ is positive will be called the forward $\rho$ flow and, when $t$ is negative, the backward (or reverse) $\rho$ flow. 

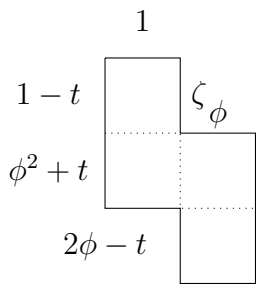
\begin{figure}[h]
\centering
	\begin{tikzpicture}
		\draw (0,1) -- (1,1) -- (1,0) -- (2,0) -- (2,-2) -- (1,-2) -- (1,-1) -- (0,-1) -- (0,1);
		\draw[dotted] (0,0) -- (1,0) -- (1,-1) -- (2,-1);
		\node at (1.25, .5) {$\zeta$}; 
		\node at (-.75, .5) {$1-t$};
		\node at (-.75, -.5) {$\phi^2+t$};
		\node at (0, -1.5) {$2\phi-t$};
		\node at (.5, 1.5) {$1$};
		\node at (1.5, .25) {$\phi$};

	\end{tikzpicture}
\caption{$t\rho \cdot (X, \omega)$ in the golden eigenform locus}
\label{fig:deformed-cyl}
\end{figure}

\begin{ass}\label{A:M-proto2}
Suppose without loss of generality, perhaps after rotating by $\pi$, that the topmost cylinder in $(X, \omega)$ is shorter than the bottomost. Suppose too that $(X, \omega)$ has been sheared so that the topmost cylinder contains a vertical saddle connection $\zeta$. In the case that $\M$ is the golden eigenform locus we will make the additional assumption that $(X, \omega)$ is some surface on the $\rho$ orbit of the golden tetromino. 
\end{ass}

Since the tangent spaces of $\M'$ and $\M$ are isomorphic under the forgetful map we will let, with some abuse of notation, $\rho$ also denote the lift of $\rho$ to $\M'$. 

\begin{prop}\label{P:boundary}
Let $(Y, \eta; q)$ be any genus two translation surface in the Mirzakhani-Wright partial compactification of $\M'$. The point $q$ coincides with a Weierstrass point or zero of $\eta$ if and only if $(Y, \eta)$ lies on the boundary of $\M$ or is contained in a Teichm\"uller curve in $\M$.
\end{prop}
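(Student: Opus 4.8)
The plan is to reduce both directions of the equivalence to M\"oller's theorem~\cite{M2} --- periodic points on primitive genus two Veech surfaces are Weierstrass points --- with the Mirzakhani--Wright structure theory~\cite{MirWri} and McMullen's genus two classification~\cite{Mc,McM:spin,Mc4,Mc5} doing the bookkeeping. I would set up two structural facts first. (i) The forgetful map $\pi\colon \mathcal H(1,1,0)\to\mathcal H(1,1)$ carries the partial compactification of $\M'$ into that of $\M$; it is $\GL(2,\R)$-equivariant; $\pi(\M')=\M$ because $p$ is periodic, so $\dim_{\bC}\M'=\dim_{\bC}\M=3$; and every proper boundary component of $\M'$ is an affine invariant submanifold of complex dimension at most $2$. (ii) The affine invariant submanifolds contained in $\overline{\M}$ and consisting of genus two translation surfaces are $\M$ itself, the Teichm\"uller curves contained in $\M\subset\mathcal H(1,1)$, and $\overline{\M}\cap\mathcal H(2)$, the last being a union of primitive Teichm\"uller curves (the Weierstrass curves, in McMullen's terminology). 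Fact (ii) rests on the triviality that an affine invariant submanifold of dimension at most $2$ is a Teichm\"uller curve, on the closedness of $X_D$ inside the moduli space of principally polarized abelian surfaces --- so that $\mathcal O_D$-real multiplication survives as the two zeros collide --- and on McMullen's description of these Weierstrass curves; crucially, since $\M$ is nonarithmetic, $D$ is not a square, so every Teichm\"uller curve appearing in (ii) is primitive.

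For the forward direction I would assume $q$ is a Weierstrass point or a zero of $\eta$ and put $A=\overline{\GL(2,\R)\cdot(Y,\eta)}\subseteq\overline{\M}$. If $(Y,\eta)\in\mathcal H(2)$ then $A\subseteq\overline{\M}\cap\mathcal H(2)$, so $(Y,\eta)$ lies on $\partial\M$ and we are done. Otherwise $(Y,\eta)\in\mathcal H(1,1)$, so $A$ is an affine invariant submanifold of $\M$, hence either a Teichm\"uller curve in $\M$ (done again) or all of $\M$. In the remaining case I would let $B$ be the orbit closure of $(Y,\eta;q)$: then $\pi(B)\subseteq A=\M$ while $\pi(B)\supseteq\GL(2,\R)\cdot(Y,\eta)$, so $\pi(B)$ is dense in $\M$ and $\dim_{\bC}B\ge3$; since boundary components of $\M'$ have dimension at most $2$, the point $(Y,\eta;q)$ cannot lie in one of them (its orbit closure would be too small), so $(Y,\eta;q)\in\M'$ and $B=\M'$. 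Thus $(Y,\eta;q)$ has dense orbit in $\M'$ --- but the locus of $(Z,\zeta;r)\in\M'$ with $r$ a Weierstrass point or a zero of $\zeta$ is closed, $\GL(2,\R)$-invariant, and proper (it misses $(X,\omega;p)$, as $p$ is neither a Weierstrass point nor a zero), so it cannot contain a point with dense orbit. This contradiction finishes the forward direction.

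For the converse I would assume $(Y,\eta)$ lies on $\partial\M$ or in a Teichm\"uller curve contained in $\M$; by (ii) the orbit closure $A:=\overline{\GL(2,\R)\cdot(Y,\eta)}$ is then a primitive genus two Teichm\"uller curve, so $\dim_{\bC}A=2$ and $(Y,\eta)$ is a primitive genus two Veech surface. Letting $B$ be the orbit closure of $(Y,\eta;q)$, I would note that $\pi(B)\supseteq\GL(2,\R)\cdot(Y,\eta)$ gives $\dim_{\bC}B\ge2$, while $\dim_{\bC}B<3$ because $\dim_{\bC}B\ge3$ would force $(Y,\eta;q)\in\M'$ and hence $B=\M'$, making $(Y,\eta)=\pi(Y,\eta;q)$ have dense orbit in $\M$, contrary to assumption. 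Hence $\dim_{\bC}B=2=\dim_{\bC}A$ and $\pi|_B\colon B\to A$ is generically finite. If $q$ is a zero of $\eta$ we are done; otherwise $q$ is a periodic point of $(Y,\eta)$ --- its orbit closure and that of $(Y,\eta)$ have the same dimension --- and since $(Y,\eta)$ is a primitive genus two Veech surface, M\"oller's theorem forces $q$ to be a Weierstrass point.

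The main obstacle is not any single estimate but assembling the structural facts correctly: the functoriality of the Mirzakhani--Wright partial compactification along $\pi$ together with the dimension drop on its boundary; the surjectivity (and finiteness) of $\pi\colon\M'\to\M$, which is precisely where periodicity of $p$ is used; and the identification of $\overline{\M}\cap\mathcal H(2)$ as a union of primitive Teichm\"uller curves, which needs closedness of $X_D$ and non-squareness of $D$. Granting these, everything reduces to the dimension counting above and a single application of M\"oller's theorem; the only place needing care in the count is distinguishing whether $(Y,\eta;q)$ itself lies in $\M'$ or in a boundary component, which in every case is decided by comparing $\dim_{\bC}B$ with $3$.
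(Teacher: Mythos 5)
Your proposal is correct, and it reaches the same two pillars as the paper's proof (the classification of which genus two Teichm\"uller curves can appear in the partial compactification of $\M$, and M\"oller's theorem on periodic points of primitive genus two Veech surfaces), but the route through them is genuinely different. The paper settles both directions by quoting Mirzakhani--Wright's Theorem 1.1 directly: collisions of the marked point with a periodic point or zero occur over a proper affine invariant submanifold of the three--dimensional $\M$, hence over a Teichm\"uller curve or the boundary; and conversely the restriction of $\M'$ over any Teichm\"uller curve it meets marks a periodic point or zero, to which M\"oller applies. You instead re-derive both implications from the general structure theory: the strict dimension drop at the Mirzakhani--Wright boundary, dimension counts on the orbit closure $B$ of $(Y,\eta;q)$, and, for the forward direction, the observation that the locus in $\M'$ where the mark sits at a Weierstrass point is closed, $\GL(2,\R)$--invariant and proper, hence cannot contain a point with dense orbit. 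This is more self-contained and has the virtue of making explicit why every curve that arises is primitive (so that M\"oller's theorem genuinely applies), a point the paper leaves implicit. One step of yours needs an explicit justification: from ``$\pi(B)$ contains the dense orbit of $(Y,\eta)$ in $\M$'' you conclude $\dim_{\bC}B\ge 3$, but density of the image of a two--dimensional manifold in a three--dimensional one is not by itself a contradiction. You need the standard fact that the forgetful projection of an orbit closure is again an affine invariant submanifold (equivalently, is the orbit closure of the projected surface; this follows, for instance, by pushing forward the affine measure and invoking Eskin--Mirzakhani), which forces $\pi(B)=\M$ and hence the dimension bound. With that fact cited---and it is exactly what the Apisa--Wright marked-point machinery supplies---your dimension bookkeeping in both directions is sound.
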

\begin{proof}
By Mirzakhani-Wright~\cite[Theorem 1.1]{MirWri}, if two periodic points (or a periodic point and a zero) coincide they do so along a proper affine invariant submanifold in $\M$. Since $\M$ is three-dimensional, this must be either a Teichm\"uller curve in $\M$ or in its boundary.

For the reverse direction, if $(Y, \eta)$ is a genus two translation surface lying in a Teichm\"uller curve in the Mirzakhani-Wright partial compactification of $\M$, then either it is an eigenform locus in $\mathcal{H}(2)$ or it is the decagon locus in $\mathcal{H}(1,1)$. M\"oller~\cite{M2} showed that the only periodic points in these loci are Weierstrass points. By Mirzakhani-Wright~\cite[Theorem 1.1]{MirWri}, the restriction of $\M'$ to any Teichm\"uller curves it contains will be a periodic point or zero on the translation surfaces in those Teichm\"uller curves.
\end{proof}

\begin{cor}
If $\M'$ contains a translation surface $(Y, \eta; q)$ where $q$ is a Weierstrass point, then $\M$ is the golden eigenform locus.
\end{cor}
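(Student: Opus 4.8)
The plan is to apply Proposition~\ref{P:boundary} directly to $(Y,\eta;q)$ itself and then to pin down exactly which Teichm\"uller curve it is forced to lie on. The one thing to recall at the outset is that an affine invariant submanifold is contained in its own Mirzakhani-Wright partial compactification, so the hypothesis $(Y,\eta;q)\in\M'$ already puts $(Y,\eta;q)$ in the partial compactification of $\M'$ and Proposition~\ref{P:boundary} applies to it.

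First I would observe that, since $\M'$ is the $\GL(2,\R)$-orbit closure of $(X,\omega;p)$ in $\mathcal{H}(1,1,0)$, the map forgetting the marked point carries $\M'$ into the $\GL(2,\R)$-orbit closure of $(X,\omega)$, which is $\M$. Hence the underlying surface $(Y,\eta)$ of a point $(Y,\eta;q)\in\M'$ actually lies in $\M\subseteq\mathcal{H}(1,1)$ — in particular, it does not lie on the boundary of $\M$. Now apply Proposition~\ref{P:boundary}: because $q$ is a Weierstrass point, $(Y,\eta)$ must lie either on the boundary of $\M$ or on a Teichm\"uller curve contained in $\M$. The first alternative has just been excluded, so $(Y,\eta)$ lies on a Teichm\"uller curve $T$, and since $\M$ is closed and $\GL(2,\R)$-invariant we get $T\subseteq\M$.

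It then remains to recognize $T$. Since $T\subseteq\M\subseteq\mathcal{H}(1,1)$, the curve $T$ is not an eigenform locus in a boundary stratum $\mathcal{H}(2)$, so by the description of the Teichm\"uller curves occurring in the Mirzakhani-Wright partial compactification of $\M$ used in the proof of Proposition~\ref{P:boundary}, $T$ must be the decagon locus. As the decagon locus lies in $\Omega E_5\cap\mathcal{H}(1,1)$, the locus $\M$ contains a decagon surface. But $\M$ is a nonarithmetic — hence primitive — eigenform locus in $\mathcal{H}(1,1)$, so by McMullen's classification it coincides with $\Omega E_D\cap\mathcal{H}(1,1)$ for a unique non-square discriminant $D$, and these loci are pairwise disjoint for distinct $D$; containing a decagon surface forces $D=5$. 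Thus $\M=\Omega E_5\cap\mathcal{H}(1,1)$ is the golden eigenform locus.

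The only delicate point is the reduction in the second paragraph: it is precisely the fact that $(Y,\eta;q)$ lies in $\M'$ itself, and not merely in the partial compactification of $\M'$, that keeps $(Y,\eta)$ inside $\M$ and therefore off its boundary, which is what collapses the dichotomy of Proposition~\ref{P:boundary} to the Teichm\"uller-curve case. Once that is in hand, the rest is bookkeeping: identifying which Teichm\"uller curves live in $\mathcal{H}(1,1)$ rather than in the boundary, and using that eigenform loci are separated by their discriminant.
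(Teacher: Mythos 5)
Your proof is correct and follows essentially the same route as the paper: apply Proposition~\ref{P:boundary}, note that $(Y,\eta)\in\M$ rules out the boundary alternative, and conclude that $\M$ contains a Teichm\"uller curve, which (being in $\mathcal{H}(1,1)$) must be the decagon locus, forcing $\M$ to be the golden eigenform locus. You merely spell out two steps the paper leaves implicit — why the boundary case is excluded and why the curve must be the decagon curve inside $\Omega E_5$ — which is a faithful expansion rather than a different argument.
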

\begin{proof}
By Proposition~\ref{P:boundary}, $(Y, \eta)$ must lie on a Teichm\"uller curve contained in $\M$. The only eigenform locus containing a Teichm\"uller curve is the golden eigenform locus, which contains the decagon locus. 
\end{proof}

The strategy of the proof will be to show that it is always possible to find a translation surface in $\M'$ where the periodic point coincides with a Weierstrass point. This will imply that $\M$ is the golden eigenform locus.


\begin{lemma}\label{L:WP}
The core curve of every cylinder in $(X, \omega)$ contains exactly two Weierstrass points.
\end{lemma}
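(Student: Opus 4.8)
The plan is to use the characterization of Weierstrass points on a genus two surface as the six fixed points of the hyperelliptic involution $\tau$, together with the fact that $\tau$ acts on any translation surface in $\mathcal{H}(1,1)$ by the map $\omega \mapsto -\omega$ (i.e.\ $z \mapsto -z$ in any flat coordinate), which therefore preserves the horizontal foliation and hence permutes the horizontal cylinders. First I would observe that since $(X,\omega) \in \mathcal{H}(1,1)$ has two simple zeros exchanged by $\tau$, the involution cannot fix any cylinder setwise while exchanging its two boundary saddle connections in an orientation-reversing way unless it acts on the core curve; more precisely, $\tau$ either fixes a horizontal cylinder $C$ setwise or swaps it with another cylinder of equal modulus, circumference, and height. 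In the three-cylinder configuration of Figure~\ref{F:starter}, I would check directly from the combinatorics (the widths $1, \phi^2, 2\phi$ and the gluing pattern, together with the constraint that $\tau$ swaps the two zeros which sit on the horizontal saddle connections) that $\tau$ must fix each of $C_1$, $C_2$, $C_3$ individually — they have distinct circumferences, so none can be swapped with another.

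Next, for a single horizontal cylinder $C$ fixed setwise by $\tau$, I would analyze the restriction $\tau|_C$. A cylinder is a quotient $(\R/c\Z) \times [0,h]$, and an orientation-preserving isometric involution of it that reverses the direction of the core curve is $z \mapsto \bar z + \text{const}$ composed with the core-reversing map; concretely $\tau|_C$ acts as $(x,y)\mapsto (a-x, h-y)$ for some constant $a$ (it must swap the two boundary components of $C$ because it swaps the two zeros, which lie on the top and bottom saddle connections — or, when a zero lies on both boundaries of $C$ via the gluing, one argues analogously). Such a map has exactly two fixed points on the open cylinder, namely the two points with $x \equiv a/2$ or $a/2 + c/2 \pmod c$ and $y = h/2$. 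These are precisely the Weierstrass points lying in the interior of $C$; since the six Weierstrass points are the six fixed points of $\tau$ and none of them is a zero of $\omega$ (as $\omega \in \mathcal{H}(1,1)$ has its zeros swapped, not fixed, by $\tau$), every Weierstrass point lies in the interior of some horizontal cylinder, and I would conclude by noting $3 \times 2 = 6$ so exactly two land in each $C_i$.

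The main obstacle I anticipate is the case analysis verifying that $\tau$ fixes each cylinder setwise rather than permuting them, and correctly identifying how $\tau$ acts on the boundary saddle connections in the specific gluing of Figure~\ref{F:starter} — one must be careful that a zero may be incident to a given cylinder along more than one saddle connection, so the naive ``$\tau$ swaps top and bottom of $C$'' needs to be replaced by a direct check that $\tau|_C$ is the unique core-reversing isometric involution and that it has its two fixed points in the interior (not at a zero). Once $\tau$ is pinned down on each cylinder this is essentially a finite check from the figure, but it is the step where an error would be easy to make, so I would carry it out explicitly using the labelled saddle-connection lengths and the opposite-sides identification.
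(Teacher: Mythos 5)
Your central computation is right and matches the paper's: since $\tau^*\omega=-\omega$, on any cylinder that $\tau$ preserves it acts in flat coordinates as $(x,y)\mapsto(a-x,h-y)$, hence has exactly two fixed points, both on the core curve. But there are two genuine gaps. First, the lemma asserts the conclusion for \emph{every} cylinder of $(X,\omega)$, in every direction (and it is later applied in exactly that generality, e.g.\ to cylinders on degenerated surfaces such as $(X',\omega')$ in the proof of Lemma~\ref{L:boundary}); your argument only treats the three horizontal cylinders of the decomposition in Figure~\ref{F:starter}, and the counting step $3\times 2=6$ has no analogue in other directions, where the number of cylinders is not fixed. Second, your justification that $\tau$ preserves each horizontal cylinder individually leans on the specific side lengths $1,\phi^2,2\phi$ of the golden tetromino (which, incidentally, are heights rather than widths). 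The surface $(X,\omega)$ of Assumption~\ref{A:M-proto} is a \emph{general} horizontally periodic surface in an arbitrary nonarithmetic eigenform locus with the combinatorics of Figure~\ref{F:starter}; its lengths are not those of the golden tetromino, so the ``distinct circumferences'' check does not apply. One can see that the middle cylinder cannot be swapped with either outer one (its circumference is the sum of theirs), but ruling out a swap of the top and bottom cylinders for arbitrary parameters needs an additional argument (e.g.\ via $\tau_*=-\mathrm{Id}$ on homology together with $[\gamma_2]=[\gamma_1]+[\gamma_3]\neq 0$), which you do not supply — and this is exactly the step you yourself flag as the delicate one.

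The paper sidesteps both issues with a single standard fact: on a surface in a hyperelliptic component (in particular any genus two translation surface), the hyperelliptic involution fixes the core curve of every cylinder, and its restriction to that core curve is an orientation-reversing isometry of a circle, which has exactly two fixed points. That argument is direction-independent, independent of the particular cylinder diagram and lengths, and transfers verbatim to the boundary surfaces where the lemma is later invoked. If you want to keep your approach, you should either quote (or prove) this invariance fact, after which your case analysis of Figure~\ref{F:starter} becomes unnecessary, or else restate the lemma so that it only claims what your argument establishes.
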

\begin{proof}
Since $(X, \omega)$ belongs to a hyperelliptic component, the core curve of every cylinder is fixed by the hyperelliptic involution, which is an orientation reversing isometry of the core curve. The result follows since every orientation reversing isometry of a circle has exactly two fixed points.
\end{proof}


\begin{prop}\label{P:t=0}
Let $(X, \omega)$ be the golden tetromino. Let $(X_t, \omega_t)$ be the translation surface obtained by flowing  along $t \rho$ for $t$ some value in $(-\phi^2, 1)$. Then $(X_t, \omega_t)$ belongs to the decagon locus if and only if $t = 0$.
\end{prop}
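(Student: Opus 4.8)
\emph{Plan of proof.} The statement has two halves. The ``if'' half asks for the golden tetromino to lie in the decagon locus. I would verify this directly: the golden tetromino is both horizontally and vertically periodic, and in each of these two transverse directions its cylinders have pairwise commensurable moduli, so the corresponding products of Dehn twists are affine automorphisms with parabolic derivative; the group they generate is non-elementary, hence a lattice, so the golden tetromino is a Veech surface, lying on a Teichm\"uller curve in $\mathcal{H}(1,1)$. All of its periods lie in $\bQ(\sqrt 5)$, so this surface is an $\mathcal{O}_5$-eigenform, and the decagon locus is the unique Teichm\"uller curve contained in the golden eigenform locus (as recorded in the proof of Proposition~\ref{P:boundary}); hence the golden tetromino lies in the decagon locus. (Alternatively one recognizes Figure~\ref{F:starter} as a three-cylinder decomposition of the regular decagon.)

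For the ``only if'' half, set $D := (X_0,\omega_0)$, the golden tetromino, and suppose $(X_t,\omega_t)$ lies in the decagon locus for some $t \in (-\phi^2, 1)$. Since the decagon locus is a Teichm\"uller curve, it is a single $\GL^+(2,\bR)$-orbit, so $(X_t,\omega_t) = g\cdot D$ for some $g \in \GL^+(2,\bR)$. The key structural observation is that the relative period deformation $t\rho$ alters only the imaginary parts of periods: it fixes the horizontal saddle connections of $D$, hence the horizontal combinatorics and the cylinder circumferences, and merely replaces the three horizontal cylinder heights $(1,\phi^2,2\phi)$ of $D$ by $(1-t,\ \phi^2+t,\ 2\phi-t)$ (Figure~\ref{fig:deformed-cyl}). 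So $(X_t,\omega_t)$ is again horizontally periodic, with exactly three cylinders of these heights.

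Next I would normalize $g$. Since $(X_t,\omega_t)$ is horizontally periodic, $g^{-1}\cdot(1,0)$ spans a periodic direction of $D$. The Veech group of $D$ is $\GL(2,\bR)$-conjugate to that of the regular decagon, the Hecke group $H_5$ generated by $\left(\begin{smallmatrix} 1 & \phi \\ 0 & 1\end{smallmatrix}\right)$ and $\left(\begin{smallmatrix} 0 & -1 \\ 1 & 0\end{smallmatrix}\right)$; since $H_5$ has a single cusp, all periodic directions of $D$ lie in one orbit of its Veech group, and in particular in the orbit of the horizontal direction. Choosing $h$ in the Veech group of $D$ with $h\cdot(1,0)$ parallel to $g^{-1}\cdot(1,0)$ and replacing $g$ by $gh$ --- which does not change $g\cdot D = (X_t,\omega_t)$ --- we may assume $g = \left(\begin{smallmatrix} a & b \\ 0 & d\end{smallmatrix}\right)$ with $ad > 0$. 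Such a $g$ carries the three horizontal cylinders of $D$ bijectively onto those of $(X_t,\omega_t)$ and multiplies all three of their heights by the single factor $|d|$. Hence, as multisets, $\{\,1-t,\ \phi^2+t,\ 2\phi-t\,\} = \lambda\cdot\{\,1,\ \phi^2,\ 2\phi\,\}$ with $\lambda := |d| > 0$. Because $1 < \phi^2 < 2\phi$ there are at most six ways to match the two sides; comparing sums gives $\lambda = 1 - t/(3\phi+2)$, and a short case check --- in each case a single linear equation over $\bQ(\phi)$ together with $t \in (-\phi^2,1)$ --- leaves only $\lambda = 1$, $t = 0$. (Matching in the natural order already gives $\phi^2+t = (1-t)\phi^2$, i.e. $t(1+\phi^2) = 0$; the remaining matchings force $\lambda \le 0$, place $t$ outside the interval, or fail one of the three height equations once $t$ is pinned down by the other two.)

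I expect the normalization step to be the real obstacle: one must exclude the possibility that the horizontal direction of $(X_t,\omega_t)$ corresponds, through the identification with a $\GL^+(2,\bR)$-translate of $D$, to some periodic direction of the decagon whose three-cylinder decomposition has height profile other than $1:\phi^2:2\phi$. This is exactly what the one-cusp property of the decagon's Veech group --- equivalently, the $\GL(2,\bR)$-equivalence of all of its periodic directions --- rules out; everything else is bookkeeping with the golden ratio.
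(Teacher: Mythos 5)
Your route is genuinely different from the paper's, and the height-matching bookkeeping at the end is fine, but two of the justifications you lean on are wrong as stated. First, in the ``if'' half, ``non-elementary, hence a lattice'' is not a valid inference: the group generated by two non-commuting parabolics (equivalently, a surface with two transverse cylinder directions of commensurable moduli, i.e.\ a Thurston--Veech surface) can perfectly well be discrete of infinite covolume, so two parabolic directions do not by themselves make the golden tetromino a Veech surface. The correct justification is the alternative you mention in passing: Figure~\ref{F:starter} is a presentation of the regular decagon surface (this is how the paper treats it; see the shearing description in Section~\ref{S:FBP2}). Second, and more seriously for your ``only if'' half: the Veech group of the decagon surface is \emph{not} the one-cusped Hecke group $H_5$ --- that $(2,5,\infty)$ triangle group is the Veech group of the double pentagon in $\mathcal{H}(2)$. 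The decagon locus has \emph{two} cusps (a three-cylinder one and a two-cylinder one), as the paper records in its own proof of this proposition, so your claim that all periodic directions of $D$ lie in a single Veech-group orbit is false, and the normalization step as written does not go through. It is repairable: the direction $g^{-1}\cdot(1,0)$ on $D$ carries three cylinders (its decomposition is carried by $g$ to the horizontal decomposition of $(X_t,\omega_t)$), and all three-cylinder directions lie in the orbit of the single three-cylinder cusp, which is exactly the transitivity you need --- but that, not single-cuspedness, is the statement to invoke.

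For comparison, the paper sidesteps the normalization entirely by working with ratios of moduli of parallel cylinders, which are unchanged under any $\GL^+(2,\bR)$-identification (a matrix rescales the moduli of all cylinders in a fixed direction by a common factor). Since the three-cylinder cusp of the decagon locus has moduli ratio $[1:1:2]$, and at time $t$ the top and bottom moduli are $1-t$ and $2-t/\phi$ with $1-t<2-t/\phi$ throughout $(-\phi^2,1)$, membership forces $2(1-t)=2-t/\phi$, i.e.\ $t=0$. Your heights-multiset argument reproves this invariance by hand, which is why the normalization becomes ``the real obstacle''; note also that your caveat about needing all three height equations is real (matching $\phi^2+t$ with $\lambda$ and $1-t$ with $\lambda\phi^2$ produces the spurious solution $t=-\phi$, killed only by the third equation). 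Once the two points above are fixed, your argument is a correct, if longer, alternative to the paper's.
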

\begin{proof}
The decagon locus has only two cusps, one which corresponds to a three-cylinder direction with ratio of moduli $[1:1:2]$ and a second corresponding to a two-cylinder direction with identical moduli. Since the three horizontal cylinders persists for all $t$ in $(-\phi^2, 1)$, if $(X_t, \omega_t)$ lies in the decagon locus then their ratio of moduli must be $[1:1:2]$. At time $t$ the modulus of the topmost cylinder is $1-t$ and the modulus of the bottommost is $2 - \frac{t}{\phi}$. Since $1 - t < 2 - \frac{t}{\phi}$ for all $t > -\phi^2$, it follows that $(X_t, \omega_t)$ belongs to the decagon locus only if $2(1-t) = 2 - \frac{t}{\phi}$. The only solution to this equation is $t = 0$. 
\end{proof}


\begin{lemma}\label{L:boundary}
The marked point does not lie on the boundary of a horizontal cylinder for its entire $\rho$ orbit. 
\end{lemma}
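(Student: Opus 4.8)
The plan is to argue by contradiction: suppose $p_t$ lies on the boundary of a horizontal cylinder of $(X_t,\omega_t)$ for every $t$ in the maximal interval $(a,b)$ on which the $\rho$-orbit is defined, and first pin $p_t$ to a single horizontal saddle connection. The key observation will be that $\rho$ --- and, by $\C$-linearity of the lift, its lift to $\M'$ --- is purely imaginary: it carries the real class $\gamma_1^{*}-\gamma_2^{*}+\gamma_3^{*}$ to a real \emph{relative} class, because the forgetful map identifies the real tangent spaces of $\M'$ and $\M$. Hence the $\rho$-flow alters only imaginary parts of periods, relative ones included, so the entire horizontal structure of $(X_t,\omega_t)$ --- its cylinders, their circumferences, and the combinatorics and lengths of the horizontal saddle connections --- is independent of $t$, as is the horizontal coordinate of $p_t$. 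Since the orbit stays in $\mathcal{H}(1,1,0)$ the point $p_t$ is never a zero, so it cannot migrate from one horizontal saddle connection to another (that would force it across a zero) nor pass into a cylinder interior (barred by the standing assumption). Therefore a single horizontal saddle connection $s$ contains $p_t$ for all $t$, and $p_t$ is the same point of $s$ throughout, at a fixed distance $d$ with $0<d<|s|$ from a zero.

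Next I would record that $s$ is not fixed by the hyperelliptic involution $\iota$. By Lemma~\ref{L:WP} the three core curves carry all six Weierstrass points between them, so no Weierstrass point lies in the interior of a horizontal saddle connection; but $\iota$ reverses the vertical coordinate, so if $\iota(s)=s$ the fixed point of the induced orientation-reversing involution of $s$ --- the midpoint of $s$ --- would be a Weierstrass point. Hence neither $p_t$ nor its limit as $t$ tends to an endpoint of $(a,b)$ is a Weierstrass point. I would then send $t$ to an endpoint of $(a,b)$, where one of the three horizontal cylinders collapses; by the Mirzakhani--Wright partial compactification the surfaces $(X_t,\omega_t;p_t)$ converge to some $(Y,\eta;q)$ with $(Y,\eta)$ in the partial compactification of $\M$. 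Because the horizontal data never changed, $s$ degenerates to a horizontal saddle connection of $(Y,\eta)$ of the same positive length, and $q$ sits at distance $d>0$ from a zero along it, so $q$ is neither a zero of $\eta$ nor a Weierstrass point. If $(Y,\eta)$ is a genus two translation surface then, since collapsing a cylinder strictly lowers the dimension of the ambient stratum, it cannot lie in $\mathcal{H}(1,1)$, so it lies in the boundary of $\M$; Proposition~\ref{P:boundary} then forces $q$ to be a Weierstrass point or a zero of $\eta$, a contradiction.

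The step I expect to be the main obstacle is the hypothesis that the limit $(Y,\eta)$ be a genus two translation surface --- that is, understanding the boundary surface obtained by collapsing the relevant cylinder at the end of the $\rho$-interval. When its core curve is nonseparating --- which should hold here, since the three core curves of a three-cylinder surface in an eigenform locus satisfy a single homology relation and none of them is null-homologous --- the collapse produces a connected genus two surface (in $\mathcal{H}(2)$, once the two zeros collide) and the argument above goes through; the cases in which the collapse instead pinches an essential curve, yielding a lower-genus or multicomponent stable surface, must be excluded or treated on their own, and this is where one can use the freedom to send $t$ to either endpoint of $(a,b)$. Finally, in the golden-eigenform case, where $\M'$ genuinely can degenerate onto a Weierstrass point, I would argue instead directly on the golden tetromino of Assumption~\ref{A:M-proto2}, using Proposition~\ref{P:t=0} to locate the decagon surface and track $p$ along the $\rho$-orbit.
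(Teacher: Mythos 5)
There is a genuine gap at the decisive step: your claim that the limit point $q$ cannot be a Weierstrass point of $(Y,\eta)$. Your counting argument via Lemma~\ref{L:WP} (all six Weierstrass points lie on the core curves, hence none in the interior of a horizontal saddle connection) is valid only while all three horizontal cylinders persist. At the endpoint of the $\rho$-orbit the degeneration is the vanishing of the vertical saddle connection $\zeta$ in $C_1$, and as $C_1$ collapses the two Weierstrass points in its interior converge precisely to the zero of $\omega'$ and to the \emph{midpoint of the horizontal saddle connection joining $C_1$ and $C_2$}. So a marked point sitting in the interior of a horizontal saddle connection at positive distance $d$ from a zero can perfectly well collide with a Weierstrass point in the limit; Proposition~\ref{P:boundary} does not give a contradiction here, it instead \emph{forces} $p$ to be located at that midpoint. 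In other words, your argument stops exactly where the real work begins.

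The paper's proof continues from this forced configuration: it extracts from Proposition~\ref{P:boundary} the local linear equation $\gamma=\tfrac{\zeta}{2}$ on $\M'$ (where $\gamma$ is the vertical segment from $p$ to the nearby Weierstrass point), then shears so that $p$ lies on a vertical segment to a zero inside $C_1$ and collapses the height of $C_1$ \emph{without} any saddle connection vanishing, so the resulting surface stays in $\M$ while $p$ hits a zero; Proposition~\ref{P:boundary} then forces that surface onto a Teichm\"uller curve, hence $\M$ is the golden eigenform locus, and finally on the golden tetromino the point (which is fixed by the $\rho$-flow at the saddle-connection midpoint, by the purely-imaginary observation you correctly made) is not a Weierstrass point, contradicting M\"oller's classification. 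Your closing sentence gestures at ``the golden-eigenform case,'' but the mechanism that lands you in that case (the second degeneration inside $\M$ producing a Teichm\"uller curve) and the argument that disposes of it are both missing, and your main-case ``contradiction'' is not one. A secondary, smaller issue: the endpoint of the maximal $\rho$-interval need not be a boundary point of $\M$ at all unless a saddle connection actually vanishes (a cylinder height reaching zero with no vertical saddle connection inside it does not degenerate the surface), so the degeneration must be set up via $\zeta$ as in Assumption~\ref{A:M-proto2} rather than via ``a cylinder collapses at the end of the interval.''
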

\begin{proof}
Suppose not to a contradiction. By Assumption~\ref{A:M-proto2}, $(X, \omega)$ has a vertical saddle connection $\zeta$ whose period tends to zero under forward $\rho$-flow. Let $(X', \omega'; p')$ be the marked translation surface in the boundary of $\M'$ that forms when the period of $\zeta$ vanishes under the forward $\rho$ orbit of $(X, \omega; p)$. By Proposition~\ref{P:boundary}, $p'$ is a Weierstrass point. Since the Weierstrass points move continuously, there is a well-defined Weierstrass point $q$ on $(X, \omega)$ so that $p$ and $q$ coincide on $(X', \omega')$. 

Cylinders $C_2$ and $C_3$ persist on $(X', \omega')$. By Lemma~\ref{L:WP}, there are two Weierstrass points in their interiors on $(X', \omega')$.  The other two Weierstrass points on $(X', \omega')$ occur at the unique zero of $\omega'$ and at the midpoint of the saddle connection that connected $C_1$ and $C_2$ on $(X, \omega)$. 

We have assumed that the marked point lies on the boundary of a horizontal cylinder under forward $\rho$ flow. Therefore, it cannot coincide with a zero on $(X', \omega')$ since it would have to be a zero on $(X, \omega)$. It follows that $p$ lies at the midpoint of one of the two horizontal saddle connections on $(X, \omega)$ that connect $C_1$ and $C_2$ (see Figure~\ref{fig:noboundary1}).



\begin{figure}[h]
\centering
	\begin{tikzpicture}
		\draw (0,1) -- (1,1) -- (1,0) -- (2,0) -- (2,-2) -- (1,-2) -- (1,-1) -- (0,-1) -- (0,1);
		\draw[dotted] (0,0) -- (1,0) -- (1,-1) -- (2,-1);
		\draw[black, fill] (.5,0) circle[radius=1.5pt]; \node at (.25,0) {$p$};
		\draw[black, fill] (.5,.5) circle[radius=1.5pt]; \node at (.25,.5) {$q$};
	\end{tikzpicture}
\caption{Starting configuration}
\label{fig:noboundary1}
\end{figure}
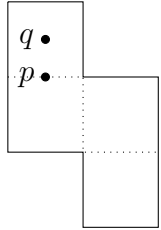

Let $q$ be the Weierstrass point on $(X, \omega)$ that merges with $p$ on $(X', \omega')$. Let $\gamma$ be the vertical line that travels up from $p$ to $q$. For any sufficiently small perturbation of $(X, \omega)$ that keeps $\zeta$ vertical, Proposition~\ref{P:boundary} implies that using forward $\rho$ flow to shrink $\zeta$ causes $p$ and $q$ to coincide exactly when $\zeta$ vanishes. Since the period of $\gamma$ vanishes exactly when the period of $\zeta$ vanishes, $\M'$ locally satisfies the equation $\gamma = \frac{\zeta}{2}$. Now, horizontally shear the surface (as in Figure~\ref{fig:noboundary2}) so that $p$ and a zero are on a vertical line contained in $C_1$. 

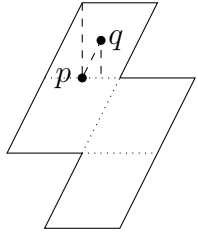
\begin{figure}[h]
\centering
	\begin{tikzpicture}
		\draw (.5,1) -- (1.5,1) -- (1,0) -- (2,0) -- (1,-2) -- (0,-2) -- (.5,-1) -- (-.5,-1) -- (.5,1);
		\draw[dotted] (0,0) -- (1,0) -- (.5,-1) -- (1.5,-1);
		\draw[black, fill] (.5,0) circle[radius=1.5pt]; \node at (.25,0) {$p$};
		\draw[black, fill] (.75,.5) circle[radius=1.5pt]; \node at (.95,.5) {$q$};
		\draw[dashed] (.5, 1) -- (.5, 0) -- (.75, .5) -- (.75, 0);
	\end{tikzpicture}
\caption{Sheared configuration}
\label{fig:noboundary2}
\end{figure}

Flow forward under $\rho$ until the height of $C_1$ becomes zero. Call the new translation surface $(Y, \eta)$. Since no saddle connection has vanished, $(Y, \eta)$ does not belong to the boundary of $\M$. Moreover, the equation $\gamma = \frac{\zeta}{2}$ implies that $p$ and $z$ coincide on $(Y, \eta)$. By Proposition~\ref{P:boundary}, $(Y, \eta)$ must belong to a Teichm\"uller curve and hence it belongs to the decagon locus and so $\M$ is the golden eigenform locus.

When $\M$ is the golden eigenform locus, Assumption~\ref{A:M-proto2} implies that $(X, \omega)$ is a surface on the $\rho$ orbit of the golden tetromino. Since $p$ is fixed under $\rho$ flow at the midpoint of one of the two saddle connections that connect $C_1$ and $C_2$ it remains that point on the golden tetromino. However, this point is not a Weierstrass point on the golden tetromino and hence not a periodic point by M\"oller~\cite{M2}. This is a contradiction. 
\end{proof}

In light of Lemma~\ref{L:boundary}, we make the following assumption.

\begin{ass}\label{A:M-interior}
Suppose without loss of generality, possibly after flowing along $\rho$ for an arbitrarily small amount of time, that $p$ belongs to the interior of a horizontal cylinder on $(X, \omega)$.
\end{ass}


\begin{lemma}\label{L:singlecylinder}
The marked point remains in the interior of a single cylinder for its entire $\rho$ orbit.
\end{lemma}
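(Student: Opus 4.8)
The plan is to argue by contradiction, following the same template as Lemma~\ref{L:boundary}: we assume the marked point $p$ moves out of the interior of one horizontal cylinder during its $\rho$ orbit, use that to produce a moment where $p$ coincides with a Weierstrass point in the Mirzakhani--Wright partial compactification of $\M'$, and then either force $\M$ to be the golden eigenform locus and rule out the configuration on the golden tetromino, or derive an outright contradiction. By Assumption~\ref{A:M-interior}, $p$ starts in the interior of some horizontal cylinder $C_i$. By Lemma~\ref{L:boundary}, $p$ cannot sit on a horizontal cylinder boundary for its whole $\rho$ orbit, but a priori it could cross such a boundary at isolated times as the surface degenerates; so suppose that at some finite time $t_0$ (forward or backward along $\rho$) the point $p$ first reaches the boundary of its current cylinder. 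I would first analyze which saddle connections and cylinder heights are changing under $\rho$: from the description $\rho = -i(\gamma_1^* - \gamma_2^* + \gamma_3^*)$, the heights of $C_1, C_2, C_3$ change at rates $-1, +1, -1$ respectively (up to normalization), so $C_1$ and $C_3$ shrink and $C_2$ grows (or vice versa under time reversal). Hence for $t$ in the relevant open interval the only ways $p$ can leave the interior of a cylinder are: (a) the cylinder containing $p$ collapses to zero height, which by Assumption~\ref{A:M-proto2} and Proposition~\ref{P:boundary} happens only at a genuine boundary degeneration of $\M'$; or (b) $p$ is dragged onto a horizontal saddle connection by the relative period deformation.

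The key step is to rule out case (b) and to show that case (a) cannot happen before $p$ has already been shown to coincide with a Weierstrass point. For this I would exploit the fact that $\rho$ only moves imaginary parts of periods: the horizontal position of $p$ relative to the horizontal saddle connections on the boundary of its cylinder is locally governed by the relative cohomology class recording $p$, and the forgetful-map isomorphism of tangent spaces means $\M'$ is cut out near $(X,\omega;p)$ by the equations defining $\M$ plus possibly one relation involving the relative period to $p$. If $p$ were genuinely free to wander to the boundary, the relative period to $p$ would be unconstrained in the horizontal direction, and then I can instead run the forward (or backward) $\rho$ flow all the way to the boundary degeneration of $\M$ itself — where $\zeta$ (or the vertical saddle connection analogue in the non-golden case) vanishes — apply Proposition~\ref{P:boundary} to conclude $p'$ is a Weierstrass point there, and then as in Lemma~\ref{L:boundary} track the continuously-moving Weierstrass point $q$ that merges with $p$ backwards. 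The structure of the boundary surface (cylinders $C_2, C_3$ persisting, one zero, one saddle-connection midpoint) pins down exactly which Weierstrass point $q$ is, hence pins down a local linear equation relating the period of the vertical segment from $p$ to $q$ to $\zeta$; shearing and then collapsing a cylinder without vanishing any saddle connection (exactly the move in Figures~\ref{fig:noboundary2}) produces a surface not on the boundary of $\M$ on which $p$ meets a zero, forcing $(Y,\eta)$ into a Teichm\"uller curve, hence $\M$ is the golden eigenform locus and $(X,\omega)$ is on the $\rho$ orbit of the golden tetromino by Assumption~\ref{A:M-proto2}.

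Finally, on the golden tetromino I would use Proposition~\ref{P:t=0} to control exactly when the surface meets the decagon locus (only at $t=0$) and check directly, using the explicit saddle-connection lengths $1, \phi, \phi^2, 2\phi$ in Figure~\ref{F:starter}, that a point which is forced by the derived linear relation to pass through the boundary of a cylinder cannot simultaneously be a Weierstrass point on the golden tetromino for any $t \in (-\phi^2, 1)$; since by M\"oller~\cite{M2} the only periodic points on the decagon locus are Weierstrass points, and periodic points are preserved along the $\rho$ orbit, this is the contradiction. The main obstacle I anticipate is case (b): making rigorous the claim that the relative period to $p$ is "horizontally unconstrained" unless $p$ is pinned to a Weierstrass point — i.e., showing that if $p$ ever touches a horizontal saddle connection then the cylinder-collapsing argument of Lemma~\ref{L:boundary} applies verbatim — because this requires knowing that $\M'$ does not impose a hidden linear relation forcing $p$ onto the boundary for a positive-measure set of times, which is where one must invoke the Mirzakhani--Wright description of $\M'$ together with Proposition~\ref{P:boundary} rather than any explicit coordinate computation.
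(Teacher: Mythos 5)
Your outline (force $\M$ to be the golden eigenform locus when $p$ meets a cylinder boundary, then rule out the configuration on the golden tetromino) matches the paper's strategy, but two essential steps are missing or misdirected. For the first step, the mechanism you propose is not the right one, and you flag it yourself as unresolved: no claim that the relative period to $p$ is ``horizontally unconstrained'' is needed, and rerunning the proof of Lemma~\ref{L:boundary} (tracking the merging Weierstrass point $q$ and the relation $\gamma = \zeta/2$) addresses a different configuration, namely $p$ sitting at a saddle-connection midpoint on the cylinder boundary for the whole orbit, not a point crossing from the interior. The crossing case is handled directly: if at time $t$ all three horizontal cylinders persist and $p$ reaches the boundary of its cylinder $C$, shear so that $C$ contains a vertical segment from $p$ to a zero $z$ on that boundary (shearing changes no heights or vertical distances, and $\rho$ moves only imaginary parts), so the $\rho$ flow drives $p$ straight into $z$ while the surface is still nondegenerate; Proposition~\ref{P:boundary} then forces the surface onto a Teichm\"uller curve, hence $\M$ is golden. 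Incidentally, your case (a) is also off: a cylinder reaching zero height is not automatically a boundary degeneration of $\M$ unless a saddle connection vanishes, a distinction the paper itself exploits.

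The more serious gap is the golden case, which is the bulk of the paper's argument and which you compress into an unspecified ``check directly'' with the lengths $1,\phi,\phi^2,2\phi$. That is not the statement one needs. By Proposition~\ref{P:t=0} the crossing can only occur at the golden tetromino itself, and there $p$ must collide with a \emph{zero} (by Lemma~\ref{L:WP} all six Weierstrass points lie on core curves, hence in cylinder interiors), so $p$ lies on a vertical separatrix; one must then rule out each cylinder by genuinely different arguments. If $p$ lies in $C_1$ or $C_3$, one shears the saddle connection $\xi$ vertical and collapses it to land in an eigenform locus in $\mathcal{H}(2)$; Proposition~\ref{P:boundary} pins $p$ to a Weierstrass point $w$ in the interior of its cylinder, giving a local linear relation $\gamma = c\,\xi$, and the contradiction is that on the tetromino $\gamma$ has rational slope while $\xi$ has irrational slope. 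If $p$ lies in the middle cylinder, one instead places a Weierstrass point $w$ on the core curve of $C_1$ directly above $p$ and uses the modulus computation of Proposition~\ref{P:t=0} to see that two horizontal cylinders have equal moduli only at the tetromino time, so the forced collision of $p$ with $w$ before $C_1$ collapses---which by Proposition~\ref{P:boundary} could only happen on the decagon locus---cannot occur. Neither argument is a routine length check, and your proposal offers no substitute for them, so as written it does not establish Lemma~\ref{L:singlecylinder}.
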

\begin{proof}
Suppose it does not, i.e. there is a real number $t$ so that for all $s$ in $[0, t]$, the three horizontal cylinders persist on $s\rho \cdot (X, \omega)$ and so that at time $t$ the marked point reaches the boundary of one cylinder, say $C$. The proof will proceed in three steps. First we will show that $\M$ must be the golden eigenform locus. Second we will show that the marked point only reaches the boundary of a cylinder on the golden tetromino $(X_0, \omega_0)$. We show too that flowing backwards along $\rho$ from $(X_0, \omega_0)$ the marked point must be contained in the middle horizontal cylinder. Finally, we show that even this cannot occur.

\noindent \textbf{Step 1: $\M$ is the golden eigenform locus}

Let $z$ be a zero on the boundary of the cylinder that $p$ reaches at time $t$ (see Figure~\ref{fig:samecylinder1}).

\begin{figure}[h]
\centering
	\begin{tikzpicture}
		\draw (0,1) -- (1,1) -- (1,0) -- (2,0) -- (2,-2) -- (1,-2) -- (1,-1) -- (0,-1) -- (0,1);
		\draw[dotted] (0,0) -- (1,0) -- (1,-1) -- (2,-1);
		\draw[black, fill] (0,0) circle[radius=1.5pt]; \node at (.25,.0) {$z$};
		\draw[black, fill] (.25,-.5) circle[radius=1.5pt]; \node at (.5,-.5) {$p$};
	\end{tikzpicture}
\caption{Starting Configuration}
\label{fig:samecylinder1}
\end{figure}
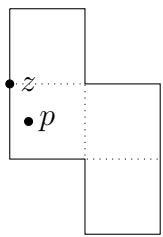

\noindent Shear so that $C$ contains a vertical line joining $p$ and $z$, as in Figure~\ref{fig:samecylinder2}.

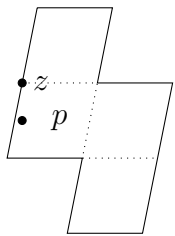
\begin{figure}[h]
\centering
	\begin{tikzpicture}
		\draw (.2,1) -- (1.2,1) -- (1,0) -- (2,0) -- (1.6,-2) -- (.6,-2) -- (.8,-1) -- (-.2,-1) -- (.2,1);
		\draw[dotted] (0,0) -- (1,0) -- (.8,-1) -- (1.8,-1);
		\draw[black, fill] (0,0) circle[radius=1.5pt]; \node at (.25,0) {$z$};
		\draw[black, fill] (0,-.5) circle[radius=1.5pt]; \node at (.5,-.5) {$p$};
	\end{tikzpicture}
\caption{Sheared configuration}
\label{fig:samecylinder2}
\end{figure}

Flow along $\rho$ so that the marked point $p$ moves straight into zero at time $s$. Since all three horizontal cylinders persist and the periodic point has been moved into a zero, $\M$ must contain a Teichm\"uller curve by Proposition~\ref{P:boundary}. Therefore, $\M$ is the golden eigenform locus. Recall that we have assumed in this case that $(X, \omega)$ is a surface given by applying $\rho$ for some amount of time to the golden tetromino. Assume specifically that $(X, \omega) = -\rho \cdot (X_0, \omega_0)$ where $(X_0, \omega_0)$ is the golden tetromino.

\noindent \textbf{Step 2: $p$ belongs to the middle horizontal cylinder on $(X, \omega)$ }

 By Proposition~\ref{P:t=0}, the $\rho$ flow from the golden tetromino lies in the decagon locus only at time zero. Therefore, the previous argument implies that the only surface on the $\rho$ flow of the golden tetromino on which $p$ meets the boundary of a horizontal cylinder is on the golden tetromino itself. Suppose to a contradiction that this occurs.

Since the only periodic points on the golden tetromino are Weierstrass points, it follows that $p$ must collide with a zero on the golden tetromino. In particular, this implies that it lies on a vertical separatrix. Suppose to a contradiction that $p$ belongs to cylinder $C$ that is not the middle cylinder $C_2$. Let $\xi$ be the dashed saddle connection depicted in Figure~\ref{F:zeta}


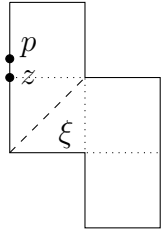
\begin{figure}[h]
\centering
	\begin{tikzpicture}
		\draw (0,1) -- (1,1) -- (1,0) -- (2,0) -- (2,-2) -- (1,-2) -- (1,-1) -- (0,-1) -- (0,1);
		\draw[dotted] (0,0) -- (1,0) -- (1,-1) -- (2,-1);
		\draw[black, fill] (0,0) circle[radius=1.5pt]; \node at (.25,.0) {$z$};
		\draw[black, fill] (0,.25) circle[radius=1.5pt]; \node at (.25,.4) {$p$};
		\draw[dashed] (0,-1) -- (1,0); \node at (.75, -.75) {$\xi$};
	\end{tikzpicture}
\caption{A saddle connection to collapse}
\label{F:zeta}
\end{figure}

Shear so that the saddle connection $\xi$ is vertical and then flow backwards along $\rho$ to collapse $\xi$ - forming a boundary translation surface $(Y, \eta)$. The boundary translation surface $(Y, \eta)$ belongs to a eigenform locus in $\mathcal{H}(2)$. By Proposition~\ref{P:boundary} the marked point $p$ must coincide with a Weierstrass point $w$ on $(Y, \eta)$. By abuse of notation, we will let $w$ be the Weierstrass point on $(X, \omega)$ that $p$ collides with under this degeneration. 

We will argue that $w$ is contained in the interior of the cylinder $C$ that contains $p$. This follows immediately from the observation that on $(Y, \eta)$ the periodic point $p$ must lie exactly on the central horizontal curve in $C$. If this did not happen, then under reverse $\rho$ flow starting at the golden tetromino, $p$ would travel from one boundary curve of $C$ to another. However, that would imply that $p$ at some point passed through a Weierstrass point. By Proposition~\ref{P:boundary} that can happen if and only if the underlying translation surface becomes a Veech surface, but by Proposition~\ref{P:t=0} that cannot happen. 

Let $\gamma$ be the straight line path from $p$ to $w$ that is contained in the cylinder $C$ and whose period goes to zero as the period of $\xi$ tends to zero. There is a constant $c$ so that $\gamma = c \xi$ holds on all surfaces in a neighborhood of $(X, \omega; p)$ in $\M'$. In particular, the slope of $\gamma$ coincides with the slope of $\xi$. However, passing to the golden tetromino under forward $\rho$ flow from $(X, \omega)$ we have that $\gamma$ has rational slope (since $p$ is contained in $C_1$ or $C_3$) and $\xi$ has irrational slope. This contradicts the fact that $\gamma$ are $\xi$ are parallel. Suppose therefore that $p$ lies in the middle horizontal cylinder on $(X, \omega)$. 

\noindent \textbf{Step 3: $p$ cannot belong to the middle horizontal cylinder on $(X, \omega)$ }

Without loss of generality suppose that $p$ lies in the upper half of the middle horizontal cylinder and that $p$ intersects the top boundary of that cylinder under forward $\rho$ flow to the golden tetromino. Shear the surface so that there is a Weierstrass point $w$ in $C_1$ that lies directly above $p$. Call the resulting surface $(Y, \eta)$ and let $\gamma$ be the vertical line from $p$ to $w$, see Figure~\ref{F:gamma-sc}.


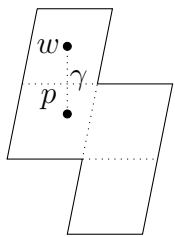
\begin{figure}[h]
\centering
	\begin{tikzpicture}
		\draw (.2,1) -- (1.2,1) -- (1,0) -- (2,0) -- (1.6,-2) -- (.6,-2) -- (.8,-1) -- (-.2,-1) -- (.2,1);
		\draw[dotted] (0,0) -- (1,0) -- (.8,-1) -- (1.8,-1);
		\draw[black, fill] (.6,-.4) circle[radius=1.5pt]; \node at (.35,-.2) {$p$};
		\draw[black, fill] (.6,.5) circle[radius=1.5pt]; \node at (.35,.5) {$w$};
		\draw[dotted] (.6, -.4) -- (.6, .5); \node at (.75, .05) {$\gamma$};
	\end{tikzpicture}
\caption{The saddle connection on $(Y, \eta)$}
\label{F:gamma-sc}
\end{figure}



Let $(Y_t, \eta_t)$ be the surface that is the time $t$ flow along $\rho$ starting at the $(Y, \eta)$. Recall that we chose $(X, \omega)$ so that at time $1$ under the forward $\rho$ flow it would become the golden tetromino. The computation in Proposition~\ref{P:t=0} shows that the only time $t$ in $[0, 2]$ for which two horizontal cylinders in $(Y_t, \eta_t)$ have identical moduli is time $1$. As mentioned in that proof, any three-cylinder direction in the decagon locus contains two cylinders of identical moduli. Therefore, $(Y_t, \eta_t)$ belongs to the decagon locus only at time $1$. 

Notice that $p$ belongs to the interior of $C_1$ and is moving upward for all time $t > 1$ (by moving upward we mean that the height of the point relative to the height of $C_1$ is increasing). Since $w$ lies directly above $p$ and lies on the core curve of $C_1$ we see that $p$ and $w$ must collide before $C_1$ fully collapses. By Proposition~\ref{P:boundary} when $p$ and $w$ collide the underlying translation surface must belong to the decagon locus. However, we have already argued that $(Y_t, \eta_t)$ does not belong to the decagon locus for $t > 1$, which is a contradiction.  


\end{proof}

Recall our standing assumption (Assumption~\ref{A:M-proto2}) that $(X, \omega)$ is a surface with a vertical saddle connection $\zeta$ whose length tends to zero as we flow along $\rho$. 

\begin{lemma}\label{L:cylinder1} 
The marked point is not contained in the topmost cylinder.
\end{lemma}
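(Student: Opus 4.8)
The plan is to argue by contradiction. In light of Lemma~\ref{L:singlecylinder} and Assumption~\ref{A:M-interior} we may assume that $p$ lies in the interior of the topmost cylinder $C_1$ for its entire $\rho$-orbit, and we will deduce first that $\M$ is the golden eigenform locus and then that $p$ is a Weierstrass point, contradicting Assumption~\ref{A:M-proto}. The first move is to collapse $C_1$. Since $\zeta\subset C_1$ is vertical, and since $C_1$ is strictly shorter than $C_3$ by Assumption~\ref{A:M-proto2} while the heights of $C_1$ and $C_3$ decrease at the same rate along $\rho$ (each governed by a single intersection number with a core curve), the forward $\rho$-flow degenerates $C_1$ before any other cylinder, and --- $\zeta$ being vertical --- this degeneration is a genuine one onto $\partial\M$. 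Collapsing a single cylinder preserves the genus, so the limit is a marked genus two translation surface $(X',\omega';p')$ in the Mirzakhani--Wright partial compactification of $\M'$; since $(X',\omega')\in\partial\M$, Proposition~\ref{P:boundary} gives that $p'$ is a Weierstrass point or a zero of $\omega'$. Because $p$ remains in the interior of $C_1$ up to the collapse, $p'$ lies on the curve to which $C_1$ degenerates; hence the point $q$ on $(X,\omega)$ that limits to $p'$ is one of the two zeros on $\partial C_1$ or, by Lemma~\ref{L:WP}, one of the two Weierstrass points on the core curve of $C_1$, and in either case $q\in\overline{C_1}$. Letting $\gamma$ be the vertical segment in $C_1$ from $p$ to $q$, the points $p$ and $q$ coincide in the limit exactly when $|\zeta|\to 0$, so --- $\gamma$ and $\zeta$ being linear functionals on $\M'$ that vanish along the same boundary divisor --- the affine invariant submanifold $\M'$ satisfies an identity $\gamma=c\,\zeta$ with $c\in\R$ near $(X,\omega;p)$.

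Next I would force this coincidence to take place in the interior of $\M$. After replacing $(X,\omega;p)$ by a generic nearby surface in $\M'$, I may assume $p$ is not vertically aligned with $q$, so a nonzero horizontal shear places $p$ and $q$ on a common vertical line inside $C_1$; applying it and then flowing forward along $\rho$ until the height of $C_1$ vanishes produces a surface $(Y,\eta)$. On $(Y,\eta)$ the segment $\gamma$ has zero real part, so the identity $\gamma=c\zeta$ forces $p$ and $q$ to coincide; but $\zeta$ --- and, for a generic choice of shear and perturbation, every other saddle connection --- now has nonzero horizontal holonomy, so $(Y,\eta)$ is an honest point of $\mathcal{H}(1,1)$ and not a point of $\partial\M$. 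Proposition~\ref{P:boundary} then forces $(Y,\eta)$ onto a Teichm\"uller curve contained in $\M$, and since the only eigenform locus containing a Teichm\"uller curve is the golden eigenform locus, $\M$ is the golden eigenform locus.

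Finally, with $\M$ the golden eigenform locus, Assumption~\ref{A:M-proto2} places $(X,\omega)$ on the $\rho$-orbit of the golden tetromino, along which --- by Lemma~\ref{L:singlecylinder} --- $p$ stays in the interior of $C_1$; thus $p$ is a point of the interior of the topmost cylinder of the golden tetromino. By Proposition~\ref{P:t=0} the golden tetromino lies on the decagon locus, so the restriction of $\M'$ to it marks a periodic point or a zero of the eigenform by Mirzakhani--Wright~\cite{MirWri}, and as $p$ is not a zero it is a periodic point there; M\"oller~\cite{M2} then makes $p$ a Weierstrass point of the golden tetromino, hence of $(X,\omega)$, contradicting Assumption~\ref{A:M-proto}.

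The main obstacle is the collapse in the first step: one must be sure that the forward $\rho$-degeneration of $(X,\omega;p)$ produces a genus two limit to which Proposition~\ref{P:boundary} applies and not a more degenerate one, that $C_1$ rather than $C_3$ is the cylinder that collapses first --- which is exactly why Assumption~\ref{A:M-proto2} normalizes $C_1$ to be the shorter cylinder --- and, from the combinatorics of the collapse, precisely which Weierstrass point or zero $p$ limits to (so that $q$ can be taken inside $\overline{C_1}$). A secondary technical point is the genericity in the shearing step, which must be arranged so that no spurious saddle connection degenerates and $(Y,\eta)$ truly lies in the interior of $\M$.
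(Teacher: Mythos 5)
Your step 2 is where the argument breaks. You derive (in the spirit of the paper's Lemmas~\ref{L:boundary} and~\ref{L:singlecylinder}) a local identity $\gamma = c\,\zeta$ on $\M'$; since at the base surface both $\gamma$ and $\zeta$ are vertical, $c$ is a nonzero \emph{real} constant, so on every nearby surface of $\M'$ the holonomies of $\gamma$ and $\zeta$ are parallel. Consequently the shear that ``places $p$ and $q$ on a common vertical line'' necessarily makes $\zeta$ vertical as well, and flowing along $\rho$ until the height of $C_1$ vanishes kills $\zeta$: the resulting surface lies on the boundary of $\M$, not in its interior, and the coincidence of $p$ with $q$ there is exactly what Proposition~\ref{P:boundary} permits. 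Your claim that ``$\zeta$ now has nonzero horizontal holonomy'' contradicts your own equation; no Teichm\"uller curve inside $\M$ is produced, and the conclusion that $\M$ is the golden eigenform locus does not follow. The analogous maneuver in the paper's Lemma~\ref{L:boundary} works only because there $p$ is made to collide with a \emph{different} point (a zero swept down from the opposite boundary of $C_1$), not with the point $q$ to which it is linearly tied; in your configuration $p$ can meet $q$ only when $\zeta$ dies. A secondary gap of the same kind sits in your step 3: even granting goldenness, ``$p$ is a Weierstrass point of the golden tetromino, hence of $(X,\omega)$'' is a non sequitur. The tetromino lies on the decagon locus (Proposition~\ref{P:t=0}), where Proposition~\ref{P:boundary} expressly allows the marked point to coincide with a Weierstrass point without this persisting to other surfaces along the $\rho$-orbit --- indeed the genuine golden point does coincide with a Weierstrass point exactly there --- so Assumption~\ref{A:M-proto} is not contradicted. (A smaller issue: your list of candidates for $q$ omits the midpoint of the saddle connection joining $C_1$ and $C_2$, which is the non-singular Weierstrass point on the collapsed seam identified in the paper's proof of Lemma~\ref{L:boundary}.)

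For comparison, the paper's proof avoids both problems and is much shorter: the first collapse of $C_1$ shows, via Proposition~\ref{P:boundary}, that $p$ must lie on a vertical separatrix in $C_1$ ending at a Weierstrass point; one then shears so as to perform one complete Dehn twist in $C_1$ (this stays in $\M$ and recreates a vertical saddle connection in $C_1$ at the same horizontal positions) and collapses again, obtaining a second such vertical alignment. Being vertically aligned with a Weierstrass point both before and after the full twist is possible only if $p$ is itself a Weierstrass point or a zero, contradicting Assumption~\ref{A:M-proto}. If you want to salvage your route, you would have to arrange --- as in Lemma~\ref{L:boundary} --- a collision of $p$ with a point \emph{other} than $q$ at the moment the height of $C_1$ reaches zero with $\zeta$ non-vertical, and then still supply a separate argument on the golden tetromino; as written, both the interior-collision step and the final contradiction are unjustified.
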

\begin{proof}
Let $(Y, \eta)$ be the translation surface on the boundary of $\M$ that is formed when the period of $\zeta$ vanishes under forward $\rho$ flow. By Proposition~\ref{P:boundary} the periodic point $p$ must coincide with a Weierstrass point on $(Y, \eta)$. In particular, this means that it must lie on a vertical separatrix contained in $C_1$ joining $p$ to a Weierstrass point. Shear the surface to perform one complete Dehn twist in $C_1$. If there is still a vertical separatrix contained in $C_1$ joining $p$ to a Weierstrass point then $p$ must itself be a Weierstrass point or a zero of $\omega$. Since $p$ is neither, it does not lie directly above or below a Weierstrass point after the shear is performed. Therefore, flowing under forward $\rho$ flow from the sheared surface collapses a vertical saddle connection contained in $C_1$ and passes to a Teichm\"uller curve in the boundary of $\M$, but it does not cause the periodic point to coincide with a Weierstrass point on the boundary translation surface. This contradicts Proposition~\ref{P:boundary}. 
\end{proof}

The previous two lemmas are sufficient to classify the periodic points in the golden eigenform locus. 

\begin{prop}
The only periodic points in the golden eigenform locus are the golden points.
\end{prop}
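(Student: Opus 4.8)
Since Lemmas~\ref{L:boundary}, \ref{L:singlecylinder}, and \ref{L:cylinder1} together show that the periodic point $p$ stays in the interior of a single one of the two lower horizontal cylinders $C\in\{C_2,C_3\}$ for its entire $\rho$-orbit, the plan is to degenerate $(X,\omega;p)$ in two different directions along this $\rho$-orbit, extract from Proposition~\ref{P:boundary} the constraint that $p$ tracks a Weierstrass point in each degeneration, and then combine these constraints with the real-multiplication structure to identify $p$ with one of the golden points.

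First I would flow $(X,\omega;p)$ along $\rho$ to the golden tetromino $(X_0,\omega_0;p_0)$, which lies in the decagon locus by Proposition~\ref{P:t=0}. Since $p_0$ lies in the interior of $C$ it is not a zero, so by Proposition~\ref{P:boundary} and M\"oller's classification on the decagon locus~\cite{M2} the point $p_0$ is a Weierstrass point, and hence by Lemma~\ref{L:WP} it is one of the two Weierstrass points on the core curve of $C$ on the golden tetromino. Equivalently, by Mirzakhani--Wright~\cite[Theorem~1.1]{MirWri}, $\M'$ restricted to the decagon locus marks that Weierstrass point, so the linear equation cutting $\M'$ out over $\M$ agrees with the corresponding Weierstrass section along the decagon locus. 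Second I would flow the other way along $\rho$, collapsing the topmost cylinder $C_1$ (whose spanning vertical saddle connection $\zeta$ shrinks, using Assumption~\ref{A:M-proto2}) to reach a surface of the Teichm\"uller curve $\Omega E_5\cap\mathcal{H}(2)$ on the boundary $\partial\M$. Again Proposition~\ref{P:boundary} and M\"oller~\cite{M2} force $p$ to limit to a Weierstrass point there, lying on the core curve of $C$ since $p$ stays in the interior of $C$; as the surfaces of $\Omega E_5\cap\mathcal{H}(2)$ are McMullen's explicit prototypes, this reduces the possibilities for $p$ to finitely many explicit candidates on the core curves of $C_2$ and $C_3$.

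Finally I would turn these candidates into the actual periodic points. Because $\M'$ has the same dimension as $\M$, the forgetful map $\M'\to\M$ is a degree-one cover and $p$ is a holomorphic $\GL(2,\R)$-equivariant section of the universal curve over $\M$; by the Mirzakhani--Wright local description~\cite{MirWri} this section is given by a linear equation in period coordinates whose coefficients, since $\M$ is an $\mathcal{O}_5$-eigenform locus, lie in $\mathbb{Q}(\sqrt5)$. The two degenerations above pin this equation down: it must agree with a specified Weierstrass section both over the decagon locus and over the $\Omega E_5\cap\mathcal{H}(2)$ boundary. Carrying out the resulting bookkeeping and discarding the solutions for which $p$ is forced to be a Weierstrass point, a zero, or the hyperelliptic image of a zero leaves exactly the two equations that define the golden points; in particular $C$ is the cylinder in which the golden points lie, and $p$ is one of them. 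Comparing with the periodic points exhibited by Wright and Kumar--Mukamel~\cite{MK16} completes the identification.

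The main obstacle is the last step --- squeezing the finitely many geometric candidates produced by the two degenerations down to exactly the two golden configurations. This requires the explicit flat geometry of both the golden tetromino and the $\Omega E_5\cap\mathcal{H}(2)$ prototypes together with the $\mathbb{Q}(\sqrt5)$-rationality and $\GL(2,\R)$-equivariance of the section, and it is precisely here that the special arithmetic of $D=5$ must be used, since for other discriminants the analogous candidates do not yield genuine periodic points. A secondary technical point is checking that the interior marked point actually survives both degenerations and lands on the relevant core curves, which one verifies by tracking its position along the $\rho$-flow and using that it avoids the singularities that appear in the limits.
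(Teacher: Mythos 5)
Your overall strategy matches the paper's: confine $p$ to the interior of one of the two lower horizontal cylinders via Lemmas~\ref{L:boundary}, \ref{L:singlecylinder}, and \ref{L:cylinder1}, then constrain $p$ by two degenerations along the $\rho$-orbit (the golden tetromino, which lies in the decagon locus, and the collapse of $\zeta$ landing in an eigenform locus in $\mathcal{H}(2)$), using Proposition~\ref{P:boundary} and M\"oller's classification, and finally invoke Kumar--Mukamel to recognize the golden-point equation. However, there is a genuine gap exactly where you flag ``the main obstacle'': the two boundary constraints do \emph{not} by themselves cut the candidates down to the golden configuration. After imposing that $p$ is a Weierstrass point on the tetromino (interior of $C_2$ or $C_3$) and coincides with a Weierstrass point or zero when $\zeta$ vanishes, there remain two surviving candidates, $w_1$ in the middle cylinder and $w_2$ in the bottom cylinder, and neither is ``forced to be a Weierstrass point, a zero, or the hyperelliptic image of a zero'' by your bookkeeping. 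The paper needs an additional idea to kill the $w_1$ family: perturb slightly along $\rho$ and perform one complete Dehn twist in each vertical cylinder of the golden tetromino (legitimate because those cylinders have equal moduli, so the multitwist preserves $\M$); this moves the marked point into the top cylinder $C_1$ on the $\rho$-orbit, contradicting Lemma~\ref{L:cylinder1}. Without this (or some equivalent global argument) your plan does not close, and the appeal to $\mathbb{Q}(\sqrt{5})$-rationality of the defining linear equation is neither justified in your sketch nor sufficient to exclude $w_1$, since the $w_1$ candidate is just as $\GL(2,\R)$-equivariant and linear as the golden one.

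Two smaller inaccuracies: the second degeneration is reached by continuing the \emph{forward} $\rho$-flow past the tetromino (the top cylinder has modulus $1-t$, collapsing at $t=1$); flowing ``the other way'' collapses the middle cylinder instead. And your assertion that $p$ limits to a point ``lying on the core curve of $C$'' at the $\mathcal{H}(2)$ boundary does not follow from $p$ staying in the open interior for $t<1$; in the actual golden configuration the marked point limits to the zero on the boundary of $C_3$, and it is precisely this coincidence, expressed as the relation $\gamma = \phi\,\zeta$ in $\M'$, that Kumar--Mukamel's description identifies with the golden points.
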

\begin{proof}
Let $(X_0, \omega_0)$ be the golden tetromino and let $(X_t, \omega_t) := t \rho \cdot (X_0, \omega_0)$ for $t$ in $(-\phi^2, 1)$. By Lemma~\ref{L:singlecylinder}, $p$ remains in the interior of a single horizontal cylinder for all $t$. By Lemma~\ref{L:cylinder1} this cylinder is not the topmost cylinder. By M\"oller~\cite{M2}, on $(X_0, \omega_0)$ the marked point coincides with a Weierstrass point in the interior of $C_2$ or $C_3$. On $(X_1, \omega_1)$ the saddle connection $\zeta$ has vanished and so $p$ must coincide with a Weierstrass point or zero by Proposition~\ref{P:boundary}. Therefore, on the golden tetromino, the periodic point $p$ is one of two Weierstrass points shown as black dots in Figure~\ref{fig:cyl}


\begin{figure}[h]
\centering
	\begin{tikzpicture}
		\draw (0,1) -- (1,1) -- (1,0) -- (2,0) -- (2,-2) -- (1,-2) -- (1,-1) -- (0,-1) -- (0,1);
		\draw[dotted] (0,0) -- (1,0) -- (1,-1) -- (2,-1);
		\node at (1.25, .5) {$\zeta$}; 
		\node at (-.25, .5) {$1$};
		\node at (-.25, -.5) {$\phi^2$};
		\node at (.5, -1.5) {$2\phi$};
		\node at (.5, 1.5) {$1$};
		\node at (1.5, .25) {$\phi$};
		
		\draw[black, fill] (.5,-.5) circle[radius=1.5pt]; \node at (.5, -.2) {$w_1$};
		\draw[black, fill] (1,-1.5) circle[radius=1.5pt];\node at (1.4, -1.5) {$w_2$}; 

	\end{tikzpicture}
\caption{Two possible marked points}
\label{fig:cyl}
\end{figure}
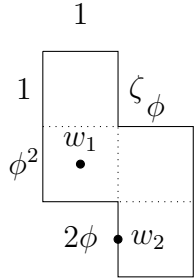

Suppose first that the marked point $p$ coincides with $w_1$ on the golden tetromino. Perturb slightly using $\rho$ so that the marked point now lies slightly above or below $w_1$. Shear the vertical cylinders so that one complete Dehn twist is performed about each vertical cylinder - this is possible since the vertical cylinders have identical moduli. The marked point now is contained in $C_1$ on the $\rho$-orbit of the golden tetromino, which contradicts Lemma~\ref{L:cylinder1}. 

Therefore, the marked point must coincide with $w_2$ on the golden tetromino. Since the periodic point coincides with a zero $z$ on the boundary of $C_3$ when $t = 1$, we see that if $\gamma$ is the straight line between $p$ and $z$, that $\M'$ satisfies the equation $\gamma = \phi \zeta$. By Kumar-Mukamel~\cite{MK16}, this equation defines the golden points.
\end{proof}

Since we established Theorem~\ref{T2} for the golden eigenform locus, we will make the following standing assumption for the remainder of the section:

\begin{ass}\label{A:golden}
$\M$ is not the golden eigenform locus 
\end{ass}

Our goal will be to derive a contradiction from this assumption and Assumption~\ref{A:M-proto}.

\begin{lemma}\label{L:half}
The marked point remains in either the top or bottom half of a single horizontal cylinder. 
\end{lemma}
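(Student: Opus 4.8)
The plan is to show that $p$ can meet the core curve of its cylinder at most once along the $\rho$-orbit, and then to exclude even one meeting by examining the forward degeneration.

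By Lemmas~\ref{L:singlecylinder} and~\ref{L:cylinder1}, $p$ stays in the interior of a fixed cylinder $C\in\{C_2,C_3\}$ for the whole $\rho$-orbit. Let $\iota$ denote the hyperelliptic involution. Since $(X,\omega)$ lies in a hyperelliptic component, $\iota$ preserves $C$ and, being an orientation-preserving involutive isometry of $C$, acts on it as the rotation by $\pi$ about its centre; its two fixed points are the Weierstrass points on the core curve $\gamma_C$ (Lemma~\ref{L:WP}). Hence $\iota(p)$ also lies in the interior of $C$, at the image of $p$ under this central rotation, and $p$ lies on $\gamma_C$ exactly when $p$ and $\iota(p)$ have the same height in $C$.

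Fix the geodesic segment $\sigma$ in $C$ from $p$ to $\iota(p)$ through the centre of $C$, and transport its relative homology class along the $\rho$-orbit. Because the $\rho$-flow translates periods and changes only their imaginary parts, the period of $\sigma$ has constant real part $r$ and imaginary part an affine-linear function of the flow time $t$; and this imaginary part equals the difference of heights $h_C(t)-2y_p(t)$, where $h_C$ is the height of $C$ and $y_p$ the height of $p$ above the lower boundary of $C$. Since $h_C-2y_p$ is affine-linear in $t$, the point $p$ meets $\gamma_C$ at most once; if it never does, then by continuity $p$ remains in a single open half of $C$ and the lemma follows. So assume $p$ meets $\gamma_C$ at an interior time, and reparametrize so this time is $0$. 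Then $r$ is real, it is nonzero since $p\ne\iota(p)$ ($p$ not being a Weierstrass point, by Assumption~\ref{A:M-proto}), and $0<|r|<c$, where $c$ is the (constant) circumference of $C$; moreover $h_C-2y_p$, being affine-linear and not identically zero, changes sign at $t=0$, so after time $0$ the point $p$ lies, say, in the bottom half of $C$ and $\iota(p)$ in the top half.

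Now flow forward along $\rho$ until $\zeta$ vanishes; as in the proof of Lemma~\ref{L:boundary} this yields a genus-two surface $(Y,\eta;q)$ on the boundary of $\M'$, on which $C$ persists because $C\ne C_1$. By Proposition~\ref{P:boundary}, $q$ is a Weierstrass point or a zero of $\eta$. If $q$ were a Weierstrass point then $\iota(q)=q$; but for $t$ close to the end of the forward $\rho$-flow $p(t)$ lies in the closed bottom half of $C$ while $\iota(p(t))$ lies in the closed top half, so $q=\lim p(t)$ lies in the closed bottom half and $\iota(q)=\lim\iota(p(t))$ lies in the closed top half, forcing $q\in\gamma_C$; then $h_C-2y_p$ vanishes at the forward endpoint as well as at $t=0$, hence identically, contradicting that $p$ meets $\gamma_C$ only at $t=0$. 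Therefore $q$ is a zero of $\eta$, necessarily on $\partial C$. Ruling this out is the heart of the matter, and the step I expect to be the main obstacle: the vanishing ``$p$ coincides with that zero'' forces, since its period has constant real part tending to $0$, that $p$ lies on the vertical separatrix issuing upward from that zero for every $t$ in the $\rho$-orbit, so that $p$ is vertically tethered to a zero on $\partial C$ while crossing $\gamma_C$. I would eliminate this by running the same argument at the backward endpoint of the $\rho$-orbit, where the Weierstrass-point alternative is excluded as above, and by showing that the remaining configuration --- in which $p$ lies on a vertical saddle connection joining the two zeros of $\omega$ throughout the $\rho$-orbit --- can be collapsed, as in Lemmas~\ref{L:boundary} and~\ref{L:singlecylinder}, to a Teichm\"uller curve inside $\M$, contradicting Assumption~\ref{A:golden}.
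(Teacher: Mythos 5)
Your reduction to ``$p$ meets the core curve at most once'' is reasonable, but the proof is not complete, and the incomplete part is exactly where the content of the lemma lies. First, your elimination of the Weierstrass-point alternative at the forward endpoint has a hole: you intersect the closed top and bottom halves of $C$ and conclude $q\in\gamma_C$, but as subsets of the surface the two closed halves also meet along $\partial C$ wherever a horizontal saddle connection (or the zero) borders $C$ on both top and bottom, and a nonsingular Weierstrass point of $(Y,\eta)$ can sit at the midpoint of such a saddle connection; so ``$q$ is a zero of $\eta$'' is not established. Second, and more seriously, the case you yourself call ``the heart of the matter'' --- $p$ vertically tethered to a zero while crossing $\gamma_C$ --- is only a plan (``I would eliminate this by\ldots''), not an argument: a collision with a zero \emph{at} the degeneration endpoint is perfectly consistent with Proposition~\ref{P:boundary}, since that endpoint lies on the boundary of $\M$, and you do not explain how the tethered configuration is to be ``collapsed to a Teichm\"uller curve inside $\M$'' without degenerating the surface. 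As written, the proposal proves nothing in the crossing case.

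The paper disposes of the crossing case in one step, with no analysis of degeneration endpoints at all. The core curve of the cylinder carries exactly two Weierstrass points (Lemma~\ref{L:WP}); before flowing, shear so that $p$ lies on the same vertical line as one of them. Shearing does not change heights, and the lifted $\rho$-flow changes only imaginary parts of periods (the same fact you invoke for the constancy of $\operatorname{Re}$ of your period of $\sigma$), so $p$ and that Weierstrass point remain vertically aligned, and the hypothesized passage of $p$ through half-height makes them collide while all three horizontal cylinders persist. By Proposition~\ref{P:boundary} such an interior collision forces $\M$ to contain a Teichm\"uller curve, i.e.\ to be the golden eigenform locus, contradicting Assumption~\ref{A:golden}. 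This shear-before-flowing step is the missing idea; with it, your long detour through the boundary surfaces and the involution is unnecessary.
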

\begin{proof}
Suppose not to a contradiction and suppose without loss of generality that $p$ begins in the top half of a cylinder a crosses to the bottom half under forward $\rho$ flow. Shear the surface so that $p$ lies above a Weierstrass point. Flowing by $\rho$ now causes the marked point $p$ to collide with a Weierstrass point without degenerating the surface. By Proposition~\ref{P:boundary}, $\M$ contains a Teichm\"uller curve. However, the golden eigenform locus is the only such eigenform locus and $\M$ is not that locus by Assumption~\ref{A:golden}.
\end{proof}



\begin{lemma}\label{L:middle-cylinder}
The marked point is not contained in the middle horizontal cylinder.
\end{lemma}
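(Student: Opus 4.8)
The plan is to show that if $p$ were contained in $C_2$, then along its $\rho$-orbit the marked point would be forced across the core curve of $C_2$, contradicting Lemma~\ref{L:half}.

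So suppose for a contradiction that $p$ lies in the interior of $C_2$. By Assumption~\ref{A:M-interior} and Lemma~\ref{L:half}, $p$ stays strictly inside one of the two halves of $C_2$ for its whole $\rho$-orbit; without loss of generality assume it is the top half (the bottom-half case is identical, using instead the other endpoint of $\zeta$ and the bottom boundary curve of $C_2$, with the $\rho$-flow run in the same direction). By Assumption~\ref{A:M-proto2} the vertical saddle connection $\zeta$ spans the topmost cylinder $C_1$, so its two endpoints are the two zeros of $\omega$; they are distinct (collapsing $\zeta$ lands in $\mathcal{H}(2)$), and by the gluing pattern of Figure~\ref{F:starter} (which is the configuration fixed by Assumption~\ref{A:M-proto}) one of them, which I will call $z$, lies on the top boundary curve of $C_2$.

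\textbf{Key observation.} I would first observe that the relative period $\int_z^p \omega$ is constant along the $\rho$-orbit. A path from $z$ to $p$ can be chosen to stay inside the top half of $C_2$ and its top boundary; such a path is disjoint from the core curves $\gamma_1$ and $\gamma_3$ and, lying in the top half, does not meet the core curve $\gamma_2$, so it has vanishing algebraic intersection number with each $\gamma_i$. Hence $\langle \rho,\,[z\to p]\rangle = -i\langle \gamma_1^*-\gamma_2^*+\gamma_3^*,\,[z\to p]\rangle = 0$, so $\int_z^p(\omega+t\rho)=\int_z^p\omega$ for all $t$. Writing $d := -\Im\int_z^p\omega$, this says that throughout the $\rho$-orbit the point $p$ sits at a fixed vertical distance $d$ below the top boundary curve of $C_2$, with $0<d<\tfrac12 H_2$, where $H_2$ is the height of $C_2$ on the base surface (the lower bound because $p$ is interior, the upper bound because $p$ is strictly in the top half).

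\textbf{Conclusion.} Now run the $\rho$-flow backward. As in the computation in Proposition~\ref{P:t=0}, the height of $C_2$ at time $t$ is $H_2+t$, and the $\rho$-orbit contains the whole interval $(-H_2,H_1)$, where $H_1$ is the height of $C_1$ on the base surface: horizontal saddle connections keep constant length under $\rho$, while the imaginary length of any vertical saddle connection passing through $C_1,C_2,C_3$ with multiplicities $a,b,c\ge 0$ is the affine function $a(H_1-t)+b(H_2+t)+c(H_3-t)$, which remains positive on $(-H_2,H_1)$ because $H_1<H_3$ by Assumption~\ref{A:M-proto2}. At the time $t_0 := 2d-H_2 \in (-H_2,0)\subset(-H_2,H_1)$ the height of $C_2$ equals $2d$, so its core curve lies precisely at distance $d$ below the top boundary curve; that is, $p$ lies on the core curve of $C_2$ at time $t_0$, and in the bottom half of $C_2$ for all $t$ slightly below $t_0$. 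Thus $p$ crosses from the top half to the bottom half of $C_2$ during its $\rho$-orbit, contradicting Lemma~\ref{L:half}. Therefore $p$ is not contained in $C_2$.

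\textbf{Main obstacle.} The step I expect to require the most care is the verification that the $\rho$-orbit really extends backward past $t_0$ — equivalently, that no saddle connection other than the one realizing the degeneration of $C_2$ shrinks to zero on $(-H_2,0)$ — since one must account for vertical saddle connections that wind around several of the cylinders; the linearity in $t$ of their imaginary lengths together with the inequality $H_1<H_3$ is exactly what closes this gap. One must also be sure to choose $z$ on the boundary curve of $C_2$ that bounds $p$'s half, so that a path from $z$ to $p$ can avoid $\gamma_2$ and all the intersection numbers vanish.
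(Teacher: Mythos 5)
The central step of your argument --- the ``Key observation'' that $\int_z^p\omega$ is constant along the $\rho$-orbit --- has a genuine gap, and it is exactly where the difficulty of the lemma lives. The class $\rho=-i(\gamma_1^*-\gamma_2^*+\gamma_3^*)$ is a class in $H^1(X,\Sigma;\C)$, where $\Sigma$ is the zero set; what actually moves the marked surface $(X,\omega;p)$ is the unique lift $\tilde\rho$ of $\rho$ to the tangent space of $\M'$ inside $H^1(X,\Sigma\cup\{p\};\C)$. Two lifts of $\rho$ differ by a purely relative class supported at $p$ (the ``move the marked point'' direction), so the value of $\tilde\rho$ on a relative cycle $[z\to p]$ ending at the marked point is \emph{not} computed by intersection numbers of the connecting path with the core curves $\gamma_i$: it is that naive value plus an a priori unknown constant, and that constant encodes precisely how $p$ moves under the flow, which is what this sequence of lemmas is trying to determine. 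Declaring it to be zero is the same as assuming $p$ stays at a fixed vertical distance below the top boundary of $C_2$ for the whole flow, i.e.\ it begs the question. The golden point shows that the principle you invoke is false in general: on the golden eigenform locus $\M'$ is cut out by $\gamma=\phi\zeta$, where $\gamma$ is a segment from $p$ to a zero lying inside a single horizontal cylinder, so $\Im\int_\gamma$ changes at rate $\phi$ under this same rel flow even though $\gamma$ crosses no core curve; your computation would predict it is constant. (Your ``main obstacle'' paragraph --- extending the backward flow over $(-H_2,0)$ --- is fine; it is the constancy of the relative period to the marked point that fails.)

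Once the motion of $p$ is treated as unknown, an argument of the paper's type becomes necessary: the paper splits into the two cases ``$p$ moves toward the top boundary under forward $\rho$-flow'' and ``$p$ moves toward the core curve.'' In the first case it shears so that $p$ lies over a Weierstrass point of $C_2$ and flows backward, producing a collision with a Weierstrass point on a nondegenerate surface, contradicting Assumption~\ref{A:golden} via Proposition~\ref{P:boundary}; in the second it collapses $C_2$ to a boundary surface in $\mathcal{H}(2)$ to force $p$ to lie directly above a Weierstrass point, then applies a full Dehn twist in $C_2$ and repeats the argument to conclude $p$ would itself be a Weierstrass point, a contradiction. To salvage your approach you would need an independent proof that $\langle\tilde\rho,[z\to p]\rangle=0$ in this situation, and no such statement is available at this stage of the paper; as written, the proof does not go through.
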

\begin{proof}
Suppose not to a contradiction. Assume without loss of generality (possibly using the hyperelliptic involution) that $p$ belongs to the top half of the middle cylinder. Let $B$ be the top boundary of the cylinder. 

\noindent \textbf{Case 1: $p$ moves towards $B$ under forward $\rho$ flow}


By assumption, the distance between $p$ and $B$ decreases under forward $\rho$ flow (and so it increases under backwards $\rho$ flow). Under backwards $\rho$ flow the height of the middle cylinder tends to zero. Therefore, shear the surface so that $p$ lies directly above a Weierstrass point contained in the middle cylinder and flow backwards along $\rho$. Since the distance from the Weierstrass point and $B$ tends to zero and the distance between $p$ and $B$ increases, $p$ must collide with a Weierstrass point before the surface degenerates. By Proposition~\ref{P:boundary}, $\M$ contains a Teichm\"uller curve and this contradicts Assumption~\ref{A:golden}.


\noindent \textbf{Case 2: $p$ moves towards the core curve under forward $\rho$ flow}


Shear the middle horizontal cylinder so that it contains a vertical saddle connection $\xi$ with the property that sending the period of $\xi$ to zero under reverse $\rho$ flow passes to a boundary translation surface in $\strata(2)$, see Figure~\ref{fig:final-stage}. By Proposition~\ref{P:boundary}, $p$ collides with a Weierstrass point on the boundary and therefore $p$ lies directly above a Weierstrass point contained in the middle horizontal cylinder. 

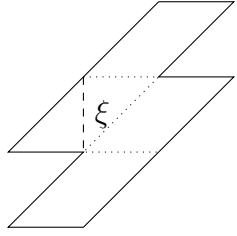
\begin{figure}[h]
\centering
	\begin{tikzpicture}
		\draw (1,1) -- (2,1) -- (1,0) -- (2,0) -- (0,-2) -- (-1,-2) -- (0,-1) -- (-1,-1) -- (1,1);
		\draw[dashed] (0,0) -- (0,-1); \node at (.25, -.5) {$\xi$};
		\draw[dotted] (0,0) -- (1,0) -- (0, -1) -- (1, -1);
	\end{tikzpicture}
\caption{Sheared configuration}
\label{fig:final-stage}
\end{figure}

Shear the surface so as to perform one complete Dehn twist in the middle horizontal cylinder. As before it is still necessarily the case that $p$ lies directly above a Weierstrass point contained in the middle horizontal cylinder. Since $p$ belongs to the interior of the middle horizontal cylinder, $p$ lies directly above a Weierstrass point both before and after the shear if and only if $p$ is a Weierstrass point, which is a contradiction. 


\end{proof}

\begin{proof}[Proof of Theorem~\ref{T2}:]
By Lemmas~\ref{L:cylinder1} and \ref{L:middle-cylinder} we may suppose that $p$ is contained in the interior of the bottom horizontal cylinder. By Lemma~\ref{L:half} we suppose without loss of generality that $p$ is contained in the top half of the bottom horizontal cylinder for the entire $\rho$ flow. Let $B$ be the top boundary of the bottom horizontal cylinder. Let $h_i$ denote the height of $C_i$ and let $\ell_i$ be the length of its core curve. Suppose without loss of generality, after shearing, that the middle horizontal cylinder begins in the configuration shown in Figure~\ref{fig:cyl2study1}.

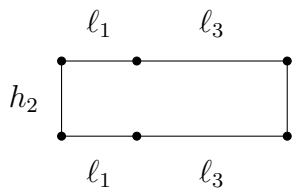
\begin{figure}[h]
\centering
	\begin{tikzpicture}
		\draw (0,0) -- (0,1) -- (3,1) -- (3,0) -- (0,0);
		\draw[black, fill] (0,0) circle[radius=1.5pt]; 
		\draw[black, fill] (0,1) circle[radius=1.5pt]; 
		\draw[black, fill] (1,1) circle[radius=1.5pt]; 
		\draw[black, fill] (1,0) circle[radius=1.5pt]; 
		\draw[black, fill] (3,1) circle[radius=1.5pt]; 
		\draw[black, fill] (3,0) circle[radius=1.5pt]; 
		\node at (.5, 1.5) {$\ell_1$};
		\node at (.5, -.5) {$\ell_1$};
		\node at (2, 1.5) {$\ell_3$};
		\node at (2, -.5) {$\ell_3$};
		\node at (-.5, .5) {$h_2$};
	\end{tikzpicture}
\caption{Starting configuration}
\label{fig:cyl2study1}
\end{figure}

\noindent \textbf{Case 1: $p$ moves towards $B$ under forward $\rho$ flow}

Suppose first that $\ell_1 < \ell_3$. After shearing by $\begin{pmatrix} 1 & \frac{\ell_1}{h_2} \\ 0 & 1 \end{pmatrix}$ we have the configuration shown in Figure~\ref{fig:cyl2study2}.

\begin{figure}[h]
\centering
	\begin{tikzpicture}
		\draw (0,0) -- (1,1) -- (4,1) -- (3,0) -- (0,0);
		\draw[black, fill] (0,0) circle[radius=1.5pt]; 
		\draw[black, fill] (4,1) circle[radius=1.5pt]; 
		\draw[black, fill] (1,1) circle[radius=1.5pt]; 
		\draw[black, fill] (1,0) circle[radius=1.5pt]; 
		\draw[black, fill] (2,1) circle[radius=1.5pt]; 
		\draw[black, fill] (3,0) circle[radius=1.5pt]; 
		\node at (1.5, 1.5) {$\ell_1$};
		\node at (.5, -.5) {$\ell_1$};
		\node at (3, 1.5) {$\ell_3$};
		\node at (2, -.5) {$\ell_3$};
		\node at (-.5, .5) {$h_2$};
	\end{tikzpicture}
\caption{First sheared configuration}
\label{fig:cyl2study2}
\end{figure}
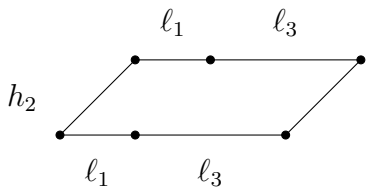

Similarly shearing by $\begin{pmatrix} 1 & \frac{\ell_3}{h_2} \\ 0 & 1 \end{pmatrix}$ we have the configuration shown in Figure~\ref{fig:cyl2study3}.

\begin{figure}[h]
\centering
	\begin{tikzpicture}
		\draw (0,0) -- (2,1) -- (5,1) -- (3,0) -- (0,0);
		\draw[black, fill] (0,0) circle[radius=1.5pt]; 
		\draw[black, fill] (2,1) circle[radius=1.5pt]; 
		\draw[black, fill] (3,1) circle[radius=1.5pt]; 
		\draw[black, fill] (5,1) circle[radius=1.5pt]; 
		\draw[black, fill] (1,0) circle[radius=1.5pt]; 
		\draw[black, fill] (3,0) circle[radius=1.5pt]; 
		\node at (2.5, 1.5) {$\ell_1$};
		\node at (.5, -.5) {$\ell_1$};
		\node at (3.5, 1.5) {$\ell_3$};
		\node at (2, -.5) {$\ell_3$};
		\node at (-.5, .5) {$h_2$};
	\end{tikzpicture}
\caption{Second sheared configuration}
\label{fig:cyl2study3}
\end{figure}
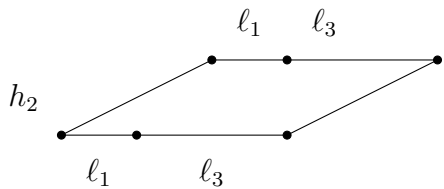


In these two sheared configurations there is exactly one vertical saddle connection contained in the middle horizontal cylinder and so collapsing it under backwards $\rho$ flow passes to a Veech surface in $\strata(2)$ on the boundary of $\M$. By Proposition~\ref{P:boundary}, on this boundary translation surface the periodic point coincides with a Weierstrass point. Therefore, in these two sheared configurations, $p$ lies directly above a Weierstrass point contained in $C_3$.

Let's begin in the first sheared configuration (Figure~\ref{fig:cyl2study2}). The marked point $p$ lies directly above a Weierstrass point $w$. Let $w'$ be the other Weierstrass point contained in the bottom horizontal cylinder. As we continue to shear the surface if at some point $p$ lies above $w'$, then $C_2$ must contain a vertical saddle connection (if not we can apply reverse $\rho$ flow to cause the periodic point to coincide with a Weierstrass point while remaining in $\M$; this contradicts Assumption~\ref{A:golden}). Conversely, if $C_2$ contains exactly one vertical saddle connection, then $p$ must lie above a Weierstrass point since collapsing $C_2$ using reverse $\rho$ flow passes to a Veech surface in $\mathcal{H}(2)$. These two observations imply that as we shear from the first sheared configuration to the second the periodic point goes from lying directly above $w$ to lying directly above $w'$ without lying over a Weierstrass point at any other point along the way. In other words, the matrix $\begin{pmatrix} 1 & \frac{\ell_3 - \ell_1}{h_2} \\ 0 & 1 \end{pmatrix}$ moves the marked point a horizontal distance of $\frac{\ell_3}{2}$ to the right of $w$. This implies that the marked point lies at height $\frac{h_2 \ell_3}{2 \left( \ell_3 - \ell_1 \right)}$ above $w$. 

However, beginning in the first sheared configuration and doing a complete Dehn twist also moves the periodic point by an integer multiple of $\frac{\ell_3}{2}$ horizontally with respect to $w$, which implies that $p$ lies at height $\frac{n h_2 \ell_3}{2 \left( \ell_3 + \ell_1 \right)}$ above $w$, where $n$ is an integer. These two equations for the height of $p$ above $w$ imply that $\frac{\ell_3 - \ell_1}{\ell_3 + \ell_1}$ is a rational number, which occurs if and only if $\frac{\ell_1}{\ell_3}$ is rational. However, this cannot be since this would imply that the original surface is arithmetic, contradicting Assumption~\ref{A:M-nonarithmetic}. The case of $\ell_3 < \ell_1$ is identical except the expression $\ell_3 - \ell_1$ is replaced with $\ell_1 - \ell_3$. The case of $\ell_1 = \ell_3$ cannot occur since this again would force the translation surface to be a torus cover.

\noindent \textbf{Case 2: $p$ moves towards the core curve under forward $\rho$ flow}

When $\zeta$ vanishes under forward $\rho$ flow, the translation surface becomes a nonarithmetic eigenform in $\mathcal{H}(2)$. By Proposition~\ref{P:boundary}, when this happens, the marked point must coincide with a Weierstrass point in the interior of $C_3$. 

Now shear $(X, \omega)$ so that $C_1$ contains no vertical saddle connection. Flow forward using $\rho$ until just after the height of $C_1$ has become zero. Call the resulting translation surface $(Y, \eta)$. It is shown in Figure~\ref{F:overcollapse-c1}. Opposite sides are identified unless otherwise specified. Solid dots in the bottom cylinder are Weierstrass points. 

\begin{figure}[h]
\centering
	\begin{tikzpicture}
		\draw (0,5) -- (1, 4) -- (3, 5) -- (4,5) -- (4, 0) -- (3,0) -- (3,2) -- (2,3) -- (0,2) -- (0,5);
		\draw[dotted] (0,4) -- (4,4);
		\draw[dotted] (0,3) -- (4,3);
		\draw[dotted] (3,2) -- (4,2);
		\draw[black, fill] (3,1) circle[radius=1.5pt];
		\draw[black, fill] (3.5,1) circle[radius=1.5pt];
		\draw[black] (3.2, .5) circle[radius=1.5pt];
		\node at (.5, 4.75) {$a$}; \node at (2, 4.75) {$b$};
		\node at (1, 2.25) {$b$}; \node at (2.5, 2.25) {$a$};
		\node at (3.5, .5) {$p$};
		\node at ( -.5, 4.5) {$C_2'$}; \node at ( -.5, 3.5) {$C_1'$}; \node at ( -.5, 1) {$C_3'$};
	\end{tikzpicture}
	\caption{Overcollapsing $C_1$}
	\label{F:overcollapse-c1}
\end{figure}
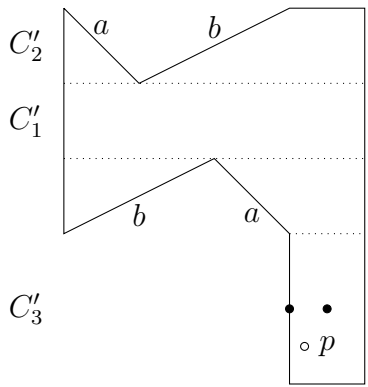

Relabel the horizontal cylinders in $(Y, \eta)$ as in Figure~\ref{F:overcollapse-c1} and, for $i \in \{1, 2, 3\}$, let $\gamma_i'$ denote the core curve of $C_i'$ oriented from left to right. Notice that continuing to flow along $\rho$ is the same as flowing along $\rho' := -i \left( \gamma_1' - \gamma_2' + \gamma_3' \right)$. After shearing so that the shorter cylinder of $\{ C_1', C_3'\}$ contains a vertical saddle connection we notice that $(Y, \eta)$ satisfies Assumptions~\ref{A:M-proto},~\ref{A:M-proto2}. and~\ref{A:M-interior}. If $C_3'$ is shorter than $C_1'$ then we contradict Lemma~\ref{L:cylinder1}. Otherwise, $p$ is contained in the bottom half of $C_3'$ and moves towards the bottom boundary as the height of $C_1'$ decreases. This is Case 1 of this proof; hence we have a contradiction as desired.

\end{proof}

%
%

\section{Proof of Theorem~\ref{T3} - The Finite Blocking Problem in Genus Two}~\label{S:FBP2}

The finite blocking problem asks whether given two points $p$ and $q$ on a translation surface $(X, \omega)$ there is a finite collection of points $B$ on $(X, \omega)$ so that all straight line paths between $p$ and $q$ contain a point in $B$? In this section we will solve the finite blocking problem for genus two translation surfaces by proving Theorem~\ref{T3}. 

Since this theorem was established in Apisa~\cite{Apisa-mp1} for genus two translation surfaces in $\mathcal{H}(2)$ and $\mathcal{H}(1,1)$ whose $\GL_2(\R)$ orbit is dense, it suffices to consider genus two translation surfaces whose orbit closures are neither the entire stratum nor a locus of torus covers. These translation surfaces were classified by McMullen in~~\cite{Mc4} and~\cite{Mc5}; in particular, the orbit closures of these translation surfaces are exactly the collection of eigenforms for real multiplication by an order in a quadratic field. These loci are classified by the (non-square) discriminant $D$ of the order and have either one or two connected components depending on whether $D$ is not or is congruent to $1$ mod $8$ respectively.

We will now outline our approach to the finite blocking problem. By M\"oller~\cite[Theorem 2.6]{M2}, if $(X, \omega)$ is not a torus cover, then there is a unique  map $\piX:(X,\omega)\to \Xmin$ to a translation surface of minimal genus, and any map from $(X,\omega)$ to a translation surface is a factor of this map. In Apisa-Wright~\cite[Lemma 3.3]{Apisa-Wright}, it was shown that similarly, when $(X, \omega)$ is not a torus cover there is a quadratic differential $\Qmin$ with a degree 1 or 2 map $\Xmin\to \Qmin$ such that any map from $(X,\omega)$ to a quadratic differential is a factor of the composite map $\piQ:(X,\omega)\to \Qmin$. By Apisa-Wright~\cite[Theorems 3.5 and 3.15]{Apisa-Wright}, 

\begin{thm}[Apisa-Wright~\cite{Apisa-Wright}]
If $(X, \omega)$ is not a torus cover then 
\begin{enumerate}
\item If $p$ is a periodic point or zero, it can only be finitely blocked from other periodic points or zeros and a blocking set is contained in the collection of periodic points. 
\item If $p$ is not a periodic point, then it is only finitely blocked from other points in $B_1 := \piQ^{-1} \left(  \piQ(p) \right)$ and a blocking set is contained in the collection of periodic points along with the points in $B_1$ 
\end{enumerate}
\end{thm}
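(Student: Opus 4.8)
The plan is to derive both statements from the Apisa--Wright classification of nongeneric point configurations (the result quoted above inside the proof of Corollary~\ref{C1}) together with the observation that a finite blocking relation both imposes and propagates a linear relation among relative periods. Suppose $p$ is finitely blocked from $q$ on $(X,\omega)$ by a finite set $B=\{b_1,\dots,b_k\}$, and let $\M'$ be the $\GL(2,\R)$-orbit closure of $(X,\omega;p,q,B)$ in the appropriate stratum of abelian differentials with marked points; it is an affine invariant submanifold by Eskin--Mirzakhani--Mohammadi~\cite{EMM}. The finite blocking condition is invariant under the $\GL(2,\R)$-action and closed in a suitable sense, so one first argues that $p$ and $q$ stay finitely blocked throughout $\M'$. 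The key consequence is a system of linear congruences --- one for each cylinder direction --- relating $\mathrm{hol}(p\to q)$ to the holonomies of the connections $p\to b_i$ and $b_i\to q$ modulo the core-curve lattice; these congruences cannot all hold on a generic marked configuration, so $(X,\omega;p,q,B)$ must be a \emph{nongeneric} configuration.

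With that in hand I would invoke Apisa--Wright~\cite[Theorem 1.3]{Apisa-Wright}: since $(X,\omega)$ is not a torus cover, a nongeneric configuration containing no singularity and no periodic point must contain two points identified by the canonical map $\piQ\colon(X,\omega)\to\Qmin$ of degree at most two (the fibres of $\piX$ lying inside those of $\piQ$, by M\"oller~\cite{M2} and Apisa--Wright~\cite{Apisa-Wright}). Applied to the marked configuration, this says each of $p,q,b_1,\dots,b_k$ is, on $(X,\omega)$, a zero, a periodic point, or a point sharing a $\piQ$-fibre with another marked point. The ``common fibre'' relation is symmetric and transitive, so after discarding the generic $b_i$ and replacing each fibre-constrained $b_i$ by a periodic point in its fibre (a hyperelliptic fixed point, or the midpoint of a saddle connection) one is left with a blocking set contained in the periodic points together with $B_1:=\piQ^{-1}(\piQ(p))$, and with $q\in B_1$ --- this is conclusion~(2). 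Conclusion~(1) follows because when $p$ is a zero or a periodic point its $\piQ$-fibre contains only zeros and periodic points (the degree at most two quotient carries zeros to zeros, and the second point of a fibre over a branch value is periodic), so $q$ must itself be a zero or a periodic point and the blocking set lies among the periodic points.

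The main obstacle is the first paragraph: one must make precise that a finite blocking set persists in $\M'$ --- connections can be created or destroyed under limits, and the blocking set has to be tracked through degenerations --- and then extract the congruence constraints on $\mathrm{hol}(p\to q)$, which amounts to controlling, in each cylinder direction, the connecting geodesics from $p$ to $q$ and their candidate blockers under the cylinder twists. This is the technical heart of Apisa--Wright~\cite[Theorems 3.5 and 3.15]{Apisa-Wright}. A secondary point requiring care is the blocking-set cleanup, i.e.\ checking that a generic blocking point is never essential and can always be traded for a periodic one. Once these are granted, the rest is the combinatorial bookkeeping above, in which the only geometric input about $(X,\omega)$ is the existence and degree of the minimal maps $\piX$ and $\piQ$.
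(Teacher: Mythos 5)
First, a point of calibration: the paper does not prove this statement at all --- it is imported verbatim from Apisa--Wright (their Theorems 3.5 and 3.15), and the expected treatment here is simply the citation. So there is no internal proof to measure you against; what you have written is an attempted reconstruction of the external argument, and as such it has a genuine gap. The decisive step of your outline --- that the finite blocking property persists over the whole orbit closure $\M'$ of $(X,\omega;p,q,B)$ and forces this marked configuration to be nongeneric, via ``linear congruences in each cylinder direction'' --- is asserted rather than proved, and you then explicitly identify that step with ``the technical heart of Apisa--Wright Theorems 3.5 and 3.15,'' which is precisely the statement to be established. That makes the proposal circular: it reduces the theorem to the theorem, plus bookkeeping. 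The content one must actually supply is (i) why blocking is closed and survives limits and perturbations inside $\M'$ (connections appear and disappear under degeneration, as you note), and (ii) why it is incompatible with the marked points being free; your congruence mechanism is left vague, and it is not clear it is even the right mechanism --- the Apisa--Wright argument runs through their classification of marked-point orbit closures and a case analysis of when blocking can hold on each type of component, not through ad hoc holonomy congruences.

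A secondary gap is the ``cleanup'' of the blocking set. From the classification you only get that each $b_i$ is a zero, a periodic point, or shares a $\piQ$-fibre with \emph{some} other marked point --- possibly another $b_j$, not necessarily $p$ or $q$. Your proposal to discard the free $b_i$ and to ``replace each fibre-constrained $b_i$ by a periodic point in its fibre'' is unjustified: a trajectory through $b_i$ need not pass through the hyperelliptic fixed point (or saddle-connection midpoint) in that fibre, so the substituted set need not block, and nothing you say rules out an essential blocking point lying in a $\piQ$-fibre disjoint from $\piQ^{-1}(\piQ(p))$. Note also that the theorem does not require such a replacement in case (2), since $B_1$ itself is allowed in the blocking set; but in case (1) the conclusion that the blocking set consists of periodic points does require exactly the argument you have skipped. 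Until both the persistence/nongenericity step and this reduction of the blocking set are argued (or the statement is simply cited, as the paper does), the proposal is an outline rather than a proof.
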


\noindent Since we are concerned with primitive translation surfaces $(X, \omega)$, which is equivalent to not being a torus cover in genus two, we have that $\Xmin = (X, \omega)$. Therefore, $\Qmin$ is the quotient of $(X, \omega)$ be the hyperelliptic involution. This shows the following:

\begin{prop}
If $(X, \omega)$ is a genus two translation surface that belongs to an eigenform locus then two points are finitely blocked only if they are exchanged by the hyperelliptic involution or they are both periodic points or zeros (in which case the blocking set is the collection of periodic points). 
\end{prop}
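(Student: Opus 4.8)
The plan is to read the proposition off from the two statements immediately preceding it; the proof is short, and the only substantive step is to make the canonical map $\piQ$ explicit.

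First I would record that a translation surface $(X,\omega)$ lying in an eigenform locus (for a non-square discriminant) is primitive: by McMullen's classification these loci are disjoint from the loci of torus covers, and in genus two being primitive is the same as not being a torus cover. Hence, as recalled in the preceding paragraph, M\"oller's theorem gives $\Xmin=(X,\omega)$, and the construction of Apisa--Wright identifies $\Qmin$ with the quotient of $(X,\omega)$ by the hyperelliptic involution $\iota$. (Concretely: in genus two $\omega$ is $\iota$-anti-invariant, so $\omega$ does not descend but $\omega^2$ does; the quotient map $(X,\omega)\to\Qmin$ has degree two and is ramified exactly at the six Weierstrass points.)

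Now suppose $p$ and $q$ are finitely blocked on $(X,\omega)$, and split into the two cases of the quoted Apisa--Wright theorem. If $p$ is a periodic point or a zero of $\omega$, then by part (1) the point $q$ is also a periodic point or a zero, and a blocking set may be taken inside the collection of periodic points; this is the second alternative of the proposition. Otherwise $p$ is not a periodic point, hence (in the case at hand) not a zero, hence not a Weierstrass point, hence not a branch point of $\piQ$; therefore $B_1=\piQ^{-1}\left(\piQ(p)\right)=\{p,\iota(p)\}$. By part (2), $q\in\{p,\iota(p)\}$, so $p$ and $q$ are exchanged by the hyperelliptic involution. This covers all cases. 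I do not anticipate any real obstacle here: once $\piQ$ has been identified with the hyperelliptic quotient, the argument is pure bookkeeping on top of the cited results.
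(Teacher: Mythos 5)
Your proposal is correct and follows essentially the same route as the paper: the paper likewise notes that an eigenform surface (non-square discriminant) is primitive, hence not a torus cover, so M\"oller's theorem gives $\Xmin=(X,\omega)$ and the Apisa--Wright construction identifies $\Qmin$ with the hyperelliptic quotient, after which the proposition is read off from the two cases of the quoted Apisa--Wright theorem exactly as you do. Your only addition is spelling out the bookkeeping (in particular that a non-periodic, non-zero point is unramified for $\piQ$, so $B_1=\{p,\iota(p)\}$), which the paper leaves implicit.
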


\begin{rem}\label{R:two-piers}
It is well-known that two points exchanged by the hyperelliptic involution are finitely blocked with a blocking set contained in the set of Weierstrass points (see for instance~\cite[Lemma 3.1]{Apisa-Wright}). Similarly, any Weierstrass point is blocked from itself by the other Weierstrass points. This reduces the proof of Theorem~\ref{T3} to showing that no other pairs of periodic points are finitely blocked by the set of periodic points. 
\end{rem}

\begin{lem}\label{L:convex}
Suppose that $\M$ is an orbit closure that is neither a locus of torus covers nor the golden eigenform locus and that contains a translation surface $(X, \omega)$ that may be represented as a convex octagon or decagon with opposite sides identified. Then Theorem~\ref{T3} holds for $\M$.
\end{lem}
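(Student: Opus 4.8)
The plan is to combine the Proposition and Remark~\ref{R:two-piers} just established with an explicit inspection of the convex polygon. Since $\M$ is neither a locus of torus covers nor the golden eigenform locus, Theorem~\ref{T2} shows that on every surface in $\M$ the periodic points are exactly the Weierstrass points. By the Apisa--Wright theorem quoted above, the preceding Proposition, and Remark~\ref{R:two-piers}, proving Theorem~\ref{T3} for $\M$ reduces to three assertions: (i) no two \emph{distinct} Weierstrass points are finitely blocked by the set of Weierstrass points; (ii) a singular point is not finitely blocked, by the set of Weierstrass points, from any Weierstrass point or from another zero; and (iii) a non-singular non-Weierstrass point $p$ is not finitely blocked from itself by the Weierstrass points together with its hyperelliptic image. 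Finite blocking among periodic points and zeros is $\GL(2,\R)$-invariant and, since these marked points are canonical, is a property of the orbit closure $\M$ alone (cf.~\cite{LMW}); so it suffices to verify (i) and (ii) on the single surface $(X,\omega)$ presented as a convex octagon or decagon $P$. The geometric task is then: on $P$, join any two of the distinguished points (Weierstrass points and zeros) by a geodesic meeting the set $W$ of Weierstrass points only at its endpoints.

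The main step is to pin down $W$ inside $P$ and then draw chords. The central symmetry of $P$ is compatible with the opposite-side gluings, so it descends to the hyperelliptic involution $\iota$; a short computation shows its fixed set on $(X,\omega)$ consists of the center $O$, the midpoint of each glued pair of opposite sides, and --- in the octagon case, where all vertices of $P$ form one class and give the double zero --- that zero. (In the decagon case the vertices split into two $\iota$-swapped classes, the two simple zeros, which are \emph{not} Weierstrass points.) Thus $W$ has six points, at most one of which fails to lie on $\partial P$. Given two distinguished points $u\ne v$, I would take the straight chord $[u,v]\subseteq P$, or --- when $u$ is a side-midpoint and $v$ an endpoint of that same side --- the half-side joining them. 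By convexity the relative interior of such a chord lies in the interior of $P$, which embeds in the surface; as every point of $W$ other than $O$ lies on $\partial P$, the chord can meet $W$ only at $u$, $v$, or $O$, and it contains $O$ only if $v$ lies on the ray from $O$ opposite $u$. That ray hits $\partial P$ at $-u$, and among points of $W$ the only antipodal relations are $-O=O$ and $-m_i=m_{i+k}$ (identified on the surface), while a vertex is never antipodal to a side-midpoint; so $O$ never lies on the chord between two distinct points of $W$, and when one endpoint is a zero I would simply choose a non-adjacent, non-antipodal vertex representative. This yields the connecting geodesics required for (i) and (ii). For (iii), $p$ lies on a cylinder in every periodic direction, and the core curve through $p$ is a closed geodesic avoiding the finitely many points of $W\cup\{\iota(p)\}$ for all but finitely many periodic directions.

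The arithmetic here is elementary, so the real content is bookkeeping: getting the description of $W$ inside $P$ exactly right --- in particular the octagon-versus-decagon, $\mathcal{H}(2)$-versus-$\mathcal{H}(1,1)$ dichotomy and which vertex classes are Weierstrass --- and then checking, case by case and using convexity, that each chosen chord or half-side genuinely avoids $W$, the one delicate point being the exclusion of the center $O$ from chords between boundary Weierstrass points. The other place requiring care is the reduction step, where one must justify that finite blocking of periodic points (and zeros) is an invariant of the orbit closure before transferring the verification from all of $\M$ to the single convex surface $(X,\omega)$.
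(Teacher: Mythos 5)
Your proposal is correct and follows essentially the same route as the paper: reduce via Theorem~\ref{T2}, the Apisa--Wright Proposition, and Remark~\ref{R:two-piers} to checking that no unexpected pairs among Weierstrass points and zeros are blocked by the Weierstrass points, identify the nonsingular Weierstrass points of the convex $2n$-gon as the side midpoints and the center, and exhibit chords avoiding them (the step the paper declares ``immediate''). Your extra assertion (iii) is already subsumed by the quoted Proposition, and your transfer from the single convex surface to all of $\M$ is the same implicit GL$(2,\R)$-invariance step the paper uses; just note that your case list should also include a zero being unblocked from itself, which your chord construction (a non-adjacent, non-antipodal pair of vertex representatives) handles anyway.
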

\begin{proof}
Since Theorem~\ref{T3} holds for $\M$ a genus two stratum, suppose that $\M$ is a nonarithmetic eigenform locus different from the golden eigenform locus. On $(X, \omega)$, the Weierstrass points that are not singularities occur at midpoints of the edges of the $n$-gon and at the midpoint of the face of the $n$-gon. By Theorem~\ref{T2} the only periodic points are Weierstrass points and hence by Remark~\ref{R:two-piers} it suffices to show that the only pairs of periodic points and zeros that are finitely blocked by the collection of Weierstrass points are nonsingular Weierstrass points from themselves. The description of $(X, \omega)$ as a convex polygon with opposite sides identified and the explicit description of the nonsingular Weierstrass points makes this immediate. 
\end{proof}

\begin{cor}
Theorem~\ref{T3} holds in $\mathcal{H}(1,1)$. 
\end{cor}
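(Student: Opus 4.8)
The plan is to reduce to the eigenform loci, run the Apisa--Wright structure theorem~\cite{Apisa-Wright} to isolate the only pairs that can possibly be blocked, and then feed the surviving cases into Lemma~\ref{L:convex}. First recall that Theorem~\ref{T3} already holds for surfaces in $\mathcal{H}(1,1)$ with dense $\GL(2,\R)$ orbit by Apisa~\cite{Apisa-mp1}, and that in genus two being \emph{primitive} coincides with not being a torus cover; so by McMullen's classification it remains to treat a surface $(X,\omega)$ whose orbit closure $\M$ is an eigenform locus $\Omega E_D\cap\mathcal{H}(1,1)$ for a non-square discriminant $D$. The Proposition preceding Remark~\ref{R:two-piers} restricts the finitely blocked pairs on such a surface to pairs exchanged by the hyperelliptic involution, handled by Remark~\ref{R:two-piers}, and pairs in which both points are periodic points or zeros. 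By Theorem~\ref{T2} the periodic points are exactly the Weierstrass points when $D\neq 5$, and the Weierstrass points together with the two golden points when $D=5$. Hence it suffices to show that no zero is finitely blocked from a zero or a Weierstrass point, that no two distinct nonsingular Weierstrass points are finitely blocked from each other, and, when $D=5$, that a golden point is finitely blocked from no point other than its image under the hyperelliptic involution.

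For $D\neq 5$ I would apply Lemma~\ref{L:convex}: it is enough to exhibit in each $\Omega E_D\cap\mathcal{H}(1,1)$ a single surface representable as a convex decagon with opposite sides identified (the polygon carrying two cone points of angle $4\pi$, which is the $\mathcal{H}(1,1)$ case of the lemma). On such a surface the nonsingular Weierstrass points are the five edge midpoints together with the center, the two zeros are opposite vertices, and convexity makes the required blocking check elementary, exactly as in the proof of Lemma~\ref{L:convex}. Producing these convex decagon eigenforms is the substance of the argument: here I would invoke McMullen's prototype description of $\mathcal{H}(1,1)$ eigenforms in~\cite{Mc5} together with the convex presentations of Leli\`evre--Weiss~\cite{LW} to show that every $\Omega E_D\cap\mathcal{H}(1,1)$ with $D$ non-square contains a surface with such a presentation; degenerate positions in which a pair of parallel sides collapses yield convex octagons, which also fall under Lemma~\ref{L:convex}.

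It remains to treat the golden eigenform locus $\Omega E_5\cap\mathcal{H}(1,1)$, which Lemma~\ref{L:convex} deliberately excludes because of its extra periodic points. Here I would argue directly on the regular decagon surface, which lies in this locus: using the explicit location of the golden points coming out of the proof of Theorem~\ref{T2} (the relation $\gamma=\phi\,\zeta$, identified with the golden points by Kumar--Mukamel~\cite{MK16}) together with the decagon geometry, one verifies that a golden point is finitely blocked from no point except the other golden point. Since the two golden points are interchanged by the hyperelliptic involution this is consistent with Theorem~\ref{T3}, and the remaining Weierstrass and zero pairs are disposed of as in the previous paragraph.

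The main obstacle is the second step: establishing that every non-golden eigenform locus in $\mathcal{H}(1,1)$ contains a convex decagon (or octagon) representative. Everything else is a direct citation or an elementary convex-geometry computation once the polygon is in hand; by contrast, controlling the normal forms of $\mathcal{H}(1,1)$ eigenforms tightly enough to guarantee convexity for every discriminant is the delicate point, and is exactly where the cited results on eigenform prototypes and convex presentations are needed.
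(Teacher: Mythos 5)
Your proposal follows essentially the same route as the paper: Leli\`evre--Weiss \cite[Theorem 2]{LW} is cited verbatim to provide a convex decagon presentation in every nonarithmetic eigenform locus of $\mathcal{H}(1,1)$ (so the step you single out as the main obstacle is in fact a direct citation --- the exceptional discriminants occur only in the $\mathcal{H}(2)$ statement of \cite{LW}, not in $\mathcal{H}(1,1)$), and the golden locus is then treated by an explicit check on the regular decagon, just as you suggest. The one place your sketch is looser than the paper is the golden locus itself: there the admissible blocking set consists of the Weierstrass points \emph{and} the two golden points, which lie in the interior of the decagon, so the remaining Weierstrass/zero pairs cannot be disposed of ``as in the previous paragraph''; the paper instead argues separately that the two interior Weierstrass points are unblocked from every periodic point and applies the convexity argument only to the subsurface of the decagon lying above the horizontal line through the upper golden point.
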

\begin{proof}
By Leli\`evre and Weiss~\cite[Theorem 2]{LW}, every nonarithmetic eigenform locus (and trivially the decagon locus) contains a translation surface that may be represented as a convex decagon with opposite sides identified. By Lemma~\ref{L:convex}, Theorem~\ref{T3} holds for all surfaces in $\mathcal{H}(1,1)$ except possibly those in the golden eigenform locus. 

The golden point on the golden eigenform locus is not finitely blocked from itself because if it were then the blocking set could be taken to be the collection of periodic points in the golden eigenform locus, but we see in the left subfigure of Figure~\ref{F:golden-W} that these points do not block the horizontal line from the golden point to itself.  

To form the right subfigure, begin with the golden tetromino (the lefthand figure with $t = 0$) and horizontally shear so that the vertical cylinders become the dotted diagonal cylinders. Applying $\rho$ - the rel flow from the previous section - to the regular decagon we arrive at the righthand subfigure, where the golden points are on the vertical line passing through the midpoint of the decagon. 

\begin{figure}[h]
\centering
	\begin{tikzpicture}
		\draw (0,1) -- (1,1) -- (1,0) -- (2,0) -- (2,-2) -- (1,-2) -- (1,-1) -- (0,-1) -- (0,1);
		\draw[dotted] (0,0) -- (1,0) -- (1,-1) -- (2,-1);
		\node at (-1.75, .5) {$1-t$};
		\node at (-1.75, -.5) {$\phi^2+t$};
		\node at (-1, -1.5) {$2\phi-t$};
		\node at (.5, 1.5) {$1$};
		\node at (1.5, 1.5) {$\phi$};
		\draw[black, fill] (0,.5) circle[radius=1.5pt]; \node at (-.25,.5) {$1$};
		\draw[black, fill] (.5,.5) circle[radius=1.5pt]; \node at (.75,.5) {$2$};
		\draw[black, fill] (.5,-.5) circle[radius=1.5pt]; \node at (.25,-.5) {$3$};
		\draw[black, fill] (1.5,-.5) circle[radius=1.5pt]; \node at (1.25,-.5) {$4$};
		\draw[black, fill] (1.5,-1.5) circle[radius=1.5pt]; \node at (1.75,-1.5) {$5$};
		\draw[black, fill] (1,-1.5) circle[radius=1.5pt]; \node at (.75,-1.5) {$6$};
		\draw[black] (1, -1.25) circle[radius=3pt]; \draw[black] (1, -1.75) circle[radius=3pt]; 
	\end{tikzpicture}
	\qquad
	\begin{tikzpicture}
		\draw (0,0) -- (1.16, 0.38) -- (1.9, 1.38) -- (1.9, 2.62) -- (1.16, 3.62) -- (0, 4) -- (-1.16, 3.62) -- (-1.9, 2.62) -- (-1.9, 1.38) -- (-1.16, .38) -- (0,0);
		\draw[dashed] (1.16, 0.38) --  (-1.16, .38);
		\draw[dashed] (1.9, 1.38) -- (-1.9, 1.38);
		\draw[dashed] (1.16, 3.62) -- (-1.16, 3.62);
		\draw[dashed] (1.9, 2.62) -- (-1.9, 2.62);
		\draw[dotted] (-1.9, 1.38) -- (1.16, .38);
		\draw[dotted] (-1.9, 2.62) -- (1.9, 1.38);
		\draw[dotted] (-1.16, 3.62) -- (1.9, 2.62);
		\draw[black, fill] (-.58,3.81) circle[radius=1.5pt]; \node at (-.9, 4) {$2$};
		\draw[black, fill] (.58,3.81)  circle[radius=1.5pt]; \node at (.9, 4) {$1$};
		\draw[black, fill] (-1.53, 3.12) circle[radius=1.5pt]; \node at (-1.9, 3.12) {$4$};
		\draw[black, fill] (1.53, 3.12)  circle[radius=1.5pt]; \node at (1.9, 3.12) {$3$};
		\draw[black, fill] (-1.9, 2) circle[radius=1.5pt]; \node at (-2.2, 2) {$5$};
		\draw[black, fill] (0, 2) circle[radius=1.5pt]; \node at (.25, 2) {$6$};
		\draw[black] (0, 2.25) circle[radius=3pt]; 
		\draw[black] (0, 1.75) circle[radius=3pt];

	\end{tikzpicture}
\caption{Figure~\ref{fig:deformed-cyl} with all periodic points marked. Weierstrass points are marked as solid dots and the golden points as circles.}
\label{F:golden-W}
\end{figure}
Since any blocking set is contained in the set of periodic points we see that point $5$ is not blocked from any periodic point (consider the left half of the decagon) and similarly point $6$ is not blocked from any periodic point. Finally, consider the part of the decagon that lies on or above the horizontal line through the upper golden point. This subsurface is convex and all periodic points excluding points $5$, $6$, and the lower golden point lie on the boundary. Therefore, none of these point are finitely blocked from any other.
\end{proof}

\begin{proof}[Proof of Theorem~\ref{T3}:]
It remains to establish the claim in nonarithmetic eigenform loci in $\mathcal{H}(2)$. By M\"oller~\cite{M2}, the only periodic points in these loci are Weierstrass points. By Leli\`evre and Weiss~\cite[Theorem 1]{LW}, each nonarithmetic eigenform locus contains a generic translation surface that has a representation as a (not necessarily strictly) convex polygon with opposite sides identified except when $D = 5, 12, 17, 21$, $32, 41_0, 45, 77$ where the subscript denotes the spin component (to be explained shortly). By Lemma~\ref{L:convex}, this establishes the theorem outside of these eight cases. 

By McMullen~\cite[Theorem 3.3]{McM:spin} each of these eigenform loci contains the surface in Figure~\ref{fig:McMullen Prototype} where $b$, $c$, $e$ are any integers (with $b$ and $c$ strictly positive and $\mathrm{gcd}(b,c,e) = 1$) such that the following hold:
\begin{enumerate}
\item $\ds{ \lambda = \frac{e + \sqrt{D}}{2} }$  
\item $D = e^2 + 4bc$ 
\item $c + e < b$
\item If $D  = E f^2$, then the spin invariant is $\ds{ \frac{e-f}{2} + (c+1)b \text{ mod }  2 }$.
\end{enumerate}

\begin{figure}[h]
\centering
	\begin{tikzpicture}
		\draw (0,2) -- (2,2) -- (2,1) -- (1,1) -- (1,0) -- (0,0) -- (0,2);
		\draw[dashed] (0,1) -- (1,1) -- (1,2);
		\node at (-.5, .5) {$\lambda$};
		\node at (-.5, 1.5) {$c$};
		\node at (.5, 2.5) {$\lambda$};
		\node at (1.5, 2.5) {$b - \lambda$};
		\draw[black, fill] (.5,1.5) circle[radius=1.5pt]; \node at (.75,1.5) {$1$};
		\draw[black, fill] (1.5,1.5) circle[radius=1.5pt]; \node at (1.75,1.5) {$2$};
		\draw[black, fill] (1.5,1) circle[radius=1.5pt]; \node at (1.75,.75) {$3$};
		\draw[black, fill] (.5,.5) circle[radius=1.5pt]; \node at (.75,.75) {$4$};
		\draw[black, fill] (1,.5) circle[radius=1.5pt]; \node at (1.25,.25) {$5$};
	\end{tikzpicture}
\caption{McMullen's prototype with nonsingular Weierstrass points labelled. Opposite sides are identified. All angles are multiples of $\frac{\pi}{2}$. Edge labels are lengths.}
\label{fig:McMullen Prototype}
\end{figure}
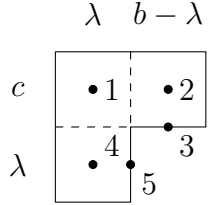

The moduli of the two horizontal cylinders are $1$ and $\frac{c}{b}$. The horizontal direction corresponds to a parabolic element of the Veech group. The parabolic element is just a product of Dehn twists about the core curves of the horizontal cylinders. If $\frac{c}{b} = \frac{c'}{b'}$ where $c'$ and $b'$ are coprime, then the Dehn twist is a complete $c'$-twist about the top cylinder and a complete $b'$ twist about the lower cylinder. The element of the Veech group is $\begin{pmatrix} 1 & b' \\ 0 & 1 \end{pmatrix}$. Letting this Dehn multi twist act on the Weierstrass points, (and hence thinking about it as an element of a $\mathrm{Sym}(5)$) we see that the action is $(12)^{c'}(45)^{b'}$.

Similarly, in the vertical direction the modulus of the leftmost cylinder is $\frac{\lambda}{\lambda + c}$ and the modulus of the rightmost cylinder is $\frac{b-\lambda}{c}$. Denote these two cylinders by $V_\ell$ and $V_r$ where the subscript denotes right and left. 
\[ \frac{\mathrm{Mod}(V_r)}{\mathrm{Mod}(V_\ell)} = \frac{(b-\lambda)(\lambda + c)}{c \lambda} \]
Recall that $\lambda^2  = e\lambda + bc$, so
\[ \frac{\mathrm{Mod}(V_r)}{\mathrm{Mod}(V_\ell)} = \frac{b-c-e}{c} \]
Express this ratio in lowest terms as $\frac{b''}{c''}$. The corresponding Dehn multi twist is a complete $b''$ twist in the rightmost cylinder and a complete $c''$ twist in the leftmost. Letting this Dehn multi twist act on the Weierstrass points, (and hence thinking about it as an element of a $\mathrm{Sym}(5)$) we see that the action is $(23)^{c''}(14)^{b''}$. 

\begin{lemma}
If $c'$ and $b'$ or $c''$ and $b''$ are both odd, then no two distinct Weierstrass points are blocked from each other on $(X, \omega)$.
\end{lemma}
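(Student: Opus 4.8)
The plan is to exhibit, for every pair of distinct Weierstrass points $p\neq q$ on $(X,\omega)$, a straight-line trajectory from $p$ to $q$ avoiding all of the other Weierstrass points. Granting this, the set $W$ of Weierstrass points is not a blocking set for $\{p,q\}$; since by the Apisa--Wright theorem quoted before Remark~\ref{R:two-piers} any finite blocking set for a pair of periodic points can be taken inside the collection of periodic points, and by M\"oller~\cite{M2} the periodic points on $(X,\omega)$ are Weierstrass points, it follows that $p$ and $q$ are not finitely blocked, which is the assertion of the lemma. We may assume $c'$ and $b'$ are both odd; the case in which $c''$ and $b''$ are both odd is handled in exactly the same way after exchanging the roles of the horizontal and vertical cylinders.

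To keep the number of cases small I would use the parabolic affine automorphism $P$ with derivative $\begin{pmatrix} 1 & b' \\ 0 & 1 \end{pmatrix}$. By the computation recorded above, $P$ permutes the five nonsingular Weierstrass points by $\sigma := (12)^{c'}(45)^{b'}=(12)(45)$ and fixes the zero $z$ of $\omega$ (which is itself a Weierstrass point). Being affine, $P$ carries any trajectory from $p$ to $q$ to a trajectory from $\sigma(p)$ to $\sigma(q)$ meeting $W=\sigma(W)$ in the images of the intersection points; hence it suffices to produce the required trajectory for one representative of each $\langle\sigma\rangle$-orbit of unordered pairs, namely $\{1,2\}$, $\{4,5\}$, $\{3,z\}$, $\{1,3\}$, $\{1,4\}$, $\{1,5\}$, $\{1,z\}$, $\{4,z\}$, and $\{3,4\}$.

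For $\{1,2\}$ and $\{4,5\}$ the two points lie on the core curve of a horizontal cylinder, which by Lemma~\ref{L:WP} contains no further Weierstrass point, so an arc of that core curve works. For $\{3,z\}$ one takes half of the horizontal saddle connection whose midpoint is $3$. For $\{1,3\}$, $\{1,4\}$, $\{1,5\}$, $\{1,z\}$ and $\{4,z\}$ one reads off from the coordinates in Figure~\ref{fig:McMullen Prototype} an explicit straight segment joining the two points through the interior of the $L$-shaped polygon and disjoint from the remaining Weierstrass points (for instance $1$ and $4$ lie on a common vertical line). The one pair needing care is $\{3,4\}$: the short segment from $4$ to the lift of $3$ lying on the floor of the polygon would cross the missing rectangle of the $L$, but the \emph{other} lift of $3$ --- the midpoint of the top edge that is identified with $3$ --- is visible from $4$ along a segment running through the interior of the polygon and missing the other four Weierstrass points, and this is the trajectory we want.

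The single genuine obstacle is therefore the pair $\{3,4\}$ (equivalently $\{3,5\}$), the only one without a short representative staying inside the polygon; here one really must use the side identifications --- via the second lift of $3$ as above, or by first applying $P$ --- and then check that the resulting trajectory misses the other four Weierstrass points and does not run into $z$ except possibly as an endpoint. If $c''$ and $b''$ are the pair that is both odd, then $\{3,4\}$ lies in the same $\langle\sigma\rangle$-orbit as $\{1,2\}$ for the vertical parabolic, so even this case is immediate. Everything else comes down to routine verifications that explicitly drawn segments avoid a finite, explicitly located set of points.
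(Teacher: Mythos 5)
There is a genuine gap, and it sits exactly at the pair you flag as ``the single genuine obstacle'', namely $\{3,4\}$ (equivalently $\{3,5\}$). Your proposed trajectory runs from $4=(\lambda/2,\lambda/2)$ to the second lift of $3$, the point $\bigl(\tfrac{\lambda+b}{2},\lambda+c\bigr)$ on the top edge. For that straight segment to stay inside the L-shaped polygon it must cross the height $y=\lambda$ while still over the bottom block, i.e.\ at $x\le\lambda$; a short computation gives the crossing abscissa $\tfrac{\lambda}{2}\cdot\tfrac{\lambda+2c+b}{\lambda+2c}$, so the segment stays in the polygon only when $b\le\lambda+2c$. In fact no straight segment inside the polygon from $4$ to either lift of $3$ exists when $b>\lambda+2c$: crossing from the bottom block into the strip through the inner horizontal saddle connection forces slope at least $1$, while the total slope to a lift of $3$ is $\tfrac{\lambda/2+c}{b/2}<1$. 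The lemma, however, is stated for the general prototype of Figure~\ref{fig:McMullen Prototype}, and there are admissible triples with $c'$ and $b'$ both odd and $b>\lambda+2c$, e.g.\ $(b,c,e)=(5,1,1)$ with $D=21$ (here $c/b=1/5$, $\lambda=\tfrac{1+\sqrt{21}}{2}\approx2.79$, $\lambda+2c\approx4.79<5$), or $(9,1,1)$ with $D=37$. For such surfaces your trajectory from $4$ to $3$ must wrap around the bottom cylinder, and you give no argument that a wrapping trajectory misses the remaining Weierstrass points. Your two fallbacks do not close this: applying $P$ fixes $3$ and merely exchanges $\{3,4\}$ with $\{3,5\}$, and the observation that $\{3,4\}$ is $(23)(14)$-equivalent to $\{1,2\}$ is only available in the other branch of the disjunction, not under the hypothesis ``$c'$ and $b'$ both odd'' that you are treating. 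The same issue is inherited by your ``exactly the same way'' reduction of the vertical case, where the analogous pair needs an analogous inequality.

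The repair is precisely what the paper does: use \emph{both} parabolic multitwists, not just the horizontal one. The vertical multitwist is always an affine automorphism and acts on the nonsingular Weierstrass points by $(23)^{c''}(14)^{b''}$, which is never trivial (it is $(23)$, $(14)$, or $(23)(14)$ depending on parities). Combining it with $(12)^{c'}(45)^{b'}$ one reduces every pair of distinct Weierstrass points either to a pair containing the point labelled $1$ or to the pair $\{4,5\}$; for instance $\{3,4\}\mapsto\{2,4\}\mapsto\{1,5\}$ when the vertical permutation is $(23)$. Visibility from point $1$ (and of $4$ from $5$) is parameter-independent and is exactly the easy part of your case list, so with this one extra symmetry your argument goes through; without it, the $\{3,4\}$ case is not proved.
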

\begin{proof}
First, we see that the Weierstrass point marked $1$ is not finitely blocked from any of the other Weierstrass points. If $c', c'', b',$ and $b''$ are all odd, then the Dehn twists in the Veech group that we identified act on the Weierstrass point by the group generated by $(23)(14)$ and $(12)(45)$. Since this group is transitive, any Weierstrass point can be moved to the Weierstrass point marked $1$ and hence no Weierstrass point is blocked from any other distinct Weierstrass point.

Now suppose that $c'$ and $b'$ are both odd, but that either $c''$ or $b''$ is even. In this case, the Dehn twists correspond to $(12)(45)$ and $(23)$ or $(14)$. In the first case, there are two orbits $\{1,2,3\}$ and $\{4,5\}$. In the second, there are also two orbits $\{1,2,4,5\}$ and $\{3\}$. In the first case, $\{1,2,3\}$ all illuminate all other distinct Weierstrass points. Since $\{4,5\}$ illuminate each other we are done. In the second case, $\{1,2,4,5\}$ illuminate all other distinct Weierstrass points so we are done. 

The case where $c''$ and $b''$ are both odd and either $c'$ or $b'$ is even is completely analogous. 
\end{proof}

Therefore, to complete the proof we must find for the $D$ listed above three integers $(b,c,e)$ with $b$ and $c$ strictly positive and such that:
\begin{enumerate}
\item $\mathrm{gcd}(b,c,e) = 1$
\item $c + e < b$
\item $D = e^2 + 4bc = E f^2$ for $E$ and $f$ integers and $f >0$.
\item Either $\frac{c}{b}$ or $\frac{b-c-e}{c}$ when put in lowest terms is a ratio of two odd integers.
\item If the spin parity is specified, then $\frac{e - f}{2} + (c+1)b \text{ mod } 2$ is the desired parity.
\end{enumerate}

\begin{figure}[h!]
\begin{tabular}{m{1.5cm}m{3cm}m{3cm}m{3cm}}
\toprule
$D=$ & $(b,c,e)=$ & $b-c-e =$ & Ratio of moduli \\
\midrule
5 & (1,1,-1) & 1 & $\frac{c}{b} = \frac{1}{1}$  \\
12 & (3,1,0) & 2 & $\frac{c}{b} = \frac{1}{3}$ \\
$17_1$ & (1,2,-3) & 2 & $\frac{c}{b-c-e} = \frac{1}{1}$ \\
21 & (1,3,-3) & 1 & $\frac{c}{b-c-e} = \frac{3}{1}$ \\
32 & (4,1,-4) & 7 & $\frac{c}{b-c-e} = \frac{1}{7}$ \\
45 & (1,5,-5) & 1 & $\frac{c}{b-c-e} = \frac{5}{1}$ \\
77 & (1,7,-7) & 1 & $\frac{c}{b-c-e} = \frac{7}{1}$  \\
\bottomrule
\end{tabular}
\caption{Triples for $D \ne 41_0$}
\label{F:table-triples}
\end{figure}

\noindent In the case of $17_1$ we must also check the spin parity, i.e.
\[ \frac{e-f}{2} + (c+1)b = -2 + (2+1) \cdot 1 \equiv 1 \text{ mod } 2 \]

We must analyze $D = 41_0$ separately, see Figure~\ref{fig:Prototype41-0}. The prototype in Figure~\ref{fig:Prototype41-0} and the specified lines show that Weierstrass points $1$, $2$, and $5$ are not finitely blocked from any other distinct Weierstrass points (we omit this computation). 
\begin{figure}[h]
\centering
	\begin{tikzpicture}
		\draw (0,2) -- (5,2) -- (5,0) -- (2.7015, 0) -- (2.7015, -2.7015) -- (0,-2.7015) -- (0,2);
		\draw[dotted] (0,0) -- (2.7015,0) -- (2.7015 , 2);
		\draw[black, fill] (0,-1.3507) circle[radius=1.5pt]; 
		\draw[black, fill] (1.3507,-1.3507) circle[radius=1.5pt]; 
		\draw[black, fill] (3.85,1) circle[radius=1.5pt]; 
		\draw[black, fill] (3.85,2) circle[radius=1.5pt]; 
		
		\draw[dashed] (0,-1.3507) -- (3.85,1);
		\draw[dashed] (0,-1.3507) -- (3.85,2);
		\draw[dashed] (1.3507,-1.3507) -- (3.85,2);
	\end{tikzpicture}
\caption{Prototype for $D = 41_0$ with $(b,c,e) = (5,2,-1)$}
\label{fig:Prototype41-0}
\end{figure}
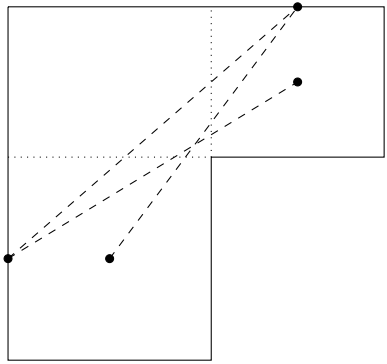
The horizontal shear yields the permutation $(45)$ and the vertical shear yields the permutation $(23)$. This shows that the Weierstrass points $3$ and $4$ can be moved by an element of $\GL(2, \R)$ to Weierstrass points that are not finitely blocked from other distinct Weierstrass points. 


\end{proof}

%
%

\section{Proof of Theorem~\ref{T:criterion} - Hyperelliptic Curves and Holomorphic $d$-differentials}~\label{S:HyperBilliards}

In this section we will determine exactly which holomorphic $d$-differentials unfold to hyperelliptic Riemann surfaces. We want our discussion to apply to billiards and so we use the approach of Mirzakhani-Wright~\cite[Section 6]{MirWriRank}. Given an $n$-gon with connected boundary and with angles, specified in clockwise order, $\theta = \left( \frac{a_1 }{b_1}, \hdots, \frac{a_n }{b_n} \right) \cdot \pi$ where $a_i$ and $b_i$ are coprime pairs of integers for all $i$,  let $\M(\theta)$ be the smallest affine invariant submanifold containing all unfoldings of rational polygons with angles (ordered clockwise) $\theta$. Let $d$ to be the least common multiple of $\{ b_i \}_{i=1}^n$.


\begin{lemma}\label{L:Q-reduction}
If $\theta$ and $\theta'$ are permutations of each other, then $\M(\theta)$ and $\M(\theta')$ coincide.
\end{lemma}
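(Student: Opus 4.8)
The plan is to show that the set of unfoldings of rational polygons with angle vector $\theta$ and the corresponding set for a permuted angle vector $\theta'$ generate the same affine invariant submanifold, by exhibiting for each rational polygon $P$ with angles $\theta$ a rational polygon $P'$ with angles $\theta'$ whose unfolding lies in $\M(\theta)$ (and vice versa), and then invoking minimality of $\M(\theta)$ and $\M(\theta')$. Since any permutation is a product of transpositions of adjacent entries, it suffices to treat the case where $\theta'$ is obtained from $\theta = \left( \tfrac{a_1}{b_1}, \dots, \tfrac{a_n}{b_n} \right)\pi$ by swapping two cyclically adjacent angles $\tfrac{a_i}{b_i}\pi$ and $\tfrac{a_{i+1}}{b_{i+1}}\pi$; I would also note at the outset that cyclic rotations of $\theta$ obviously give the same unfolding locus, since they correspond merely to relabeling the first vertex of the polygon.

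The key geometric step is the following observation about adjacent angles. Given a rational polygon $P$ with angle list $\theta$, consider the edge $e$ joining the vertices carrying angles $\tfrac{a_i}{b_i}\pi$ and $\tfrac{a_{i+1}}{b_{i+1}}\pi$. The unfolding construction reflects $P$ across its edges; reflecting $P$ across $e$ produces a new polygon $P'' = P \cup r_e(P)$ whose boundary, read in clockwise order, realizes the transposed angle data at the two relevant vertices (the two original angles at the endpoints of $e$ get merged with their reflections, and after discarding the now-interior edge $e$ one reads off a polygon whose clockwise angle list is a cyclic rotation of $\theta'$, possibly after a further small perturbation of $P$ within its angle class to remove accidental collinearities). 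Crucially, $P''$ unfolds to the \emph{same} translation surface as $P$: the unfolding of $P$ is already invariant under the group generated by reflections in the edges of $P$, so adding the single reflected copy $r_e(P)$ does not change the resulting abelian differential. Hence the unfolding of $P''$ lies in $\M(\theta)$, which gives one rational polygon with angles (a rotation of) $\theta'$ whose unfolding is in $\M(\theta)$.

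From here the argument is a density/minimality wrap-up. As $P$ ranges over all rational polygons with angles $\theta$, the polygons $P''$ constructed above range over a family of rational polygons with angles $\theta'$ that is large enough — one checks that the free parameters (edge lengths) of $P$ map onto the edge-length parameters of $P''$ up to the linear relation imposed by merging across $e$, so the unfoldings of the $P''$ are not confined to a proper subvariety — and therefore the smallest affine invariant submanifold containing all of them, namely $\M(\theta')$ by definition, is contained in $\M(\theta)$. Running the same construction with the roles of $\theta$ and $\theta'$ reversed (reflect $P'$ across the edge between the swapped vertices to undo the transposition) gives $\M(\theta) \subseteq \M(\theta')$, hence equality. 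The main obstacle I anticipate is the bookkeeping in the previous paragraph: verifying carefully that after the single reflection and the removal of the interior edge one genuinely obtains a polygon with the transposed angle list (watching for degenerate cases where an angle becomes $\pi$ or the polygon becomes non-embedded), and confirming that a generic $P$ in its angle class yields a generic $P''$ so that no collapse of dimension occurs. These are routine but must be done with care; none of it requires anything beyond the unfolding formalism of Mirzakhani--Wright already set up in this section.
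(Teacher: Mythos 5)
There is a genuine gap, and it is the central geometric step. If $P$ has clockwise angles $\theta_1,\dots,\theta_n$ and you reflect across the edge $e$ joining the vertices with angles $\theta_i$ and $\theta_{i+1}$, the union $P\cup r_e(P)$ is not a polygon with the transposed angle list: it is a polygon with $2n-2$ vertices whose angles are $2\theta_i$ and $2\theta_{i+1}$ at the endpoints of $e$ and in which every other $\theta_j$ appears twice. For a triangle $(\alpha,\beta,\gamma)$ reflected across the $\alpha\beta$-edge you get a kite with angles $(2\alpha,\gamma,2\beta,\gamma)$, not the triangle $(\beta,\alpha,\gamma)$. No amount of perturbation or discarding of the interior edge fixes this. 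In fact no construction of this kind can work surface-by-surface: the cyclic order of the cone points of the pillowcase double along the real line is determined by the polygon, so for a generic polygon with angle order $\theta$ there is simply no polygon with angle order $\theta'$ unfolding to the same translation surface (outside the dihedral symmetries given by cyclic rotation and mirror reversal, which do not generate all of $\mathrm{Sym}(n)$ for $n\geq 4$). The statement is true only at the level of orbit closures, which is why it requires a density argument rather than a matching of individual unfoldings.

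The paper's proof runs entirely through the pillowcase doubles: the doubles of polygons with angles $\theta$ are exactly the $d$-differentials on the sphere, in the stratum determined by the multiset of angles, with all singularities on the real line; since the singularities can be moved freely along the real line, this family $\mathcal{U}(\theta)$ is Zariski dense in the stratum, and the stratum is connected, so its Zariski closure contains $\mathcal{U}(\theta')$. The canonical-unfolding map to abelian differentials is holomorphic, and by Filip's theorem $\M(\theta)$ is an algebraic variety, so $\M(\theta)$ contains the image of the whole Zariski closure, hence $\M(\theta')$; symmetry gives equality. Note that your fall-back ``not confined to a proper subvariety, hence $\M(\theta')\subseteq\M(\theta)$'' step quietly presupposes exactly this kind of algebraicity/closedness of $\M(\theta)$ together with a closure argument for the full family of $\theta'$-unfoldings, so even after repairing the angle bookkeeping you would still need the Filip-type input that the paper uses.
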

\begin{proof}
An unfolding is completely specified by its pillowcase double, which is a $d$-differential on the Riemann sphere with all zeros on the real line. Suppose that $P$ belongs to a stratum $\mathcal{Q}$ of $d$-differentials and let $\mathcal{U}(\theta)$ be the collection of all such pillowcase doubles. Given the pillowcase double $P$, all zeros may be moved freely on the real line by altering the lengths of the sides of $B$. This shows that $\mathcal{U}(\theta)$ is Zariski dense in $\mathcal{Q}$. In particular, the Zariski closure of $\mathcal{U}(\theta)$ contains $\mathcal{U}(\theta')$ since all strata of $d$-differentials on the sphere are connected.

Let $\pi: \mathcal{Q} \ra \mathcal{H}$ be the map that associates to a $d$-differential its canonical unfolding as an abelian differential. This map is holomorphic. By Filip~\cite{Fi1}, $\M(\theta)$, which is the orbit closure of $\pi \left( \mathcal{U}(\theta) \right)$, is a variety. In particular, $\M(\theta)$ contains the image of the Zariski closure of $\mathcal{U}(\theta)$ under $\pi$. Therefore, $\M(\theta)$ contains $\M(\theta')$. By symmetry these two orbit closures coincide. 
\end{proof}

In light of the lemma, we will relax our assumption that $\theta$ be specified up to cyclic order and instead insist that it is specified as a set. By Lemma~\ref{L:Q-reduction}, the loci $\M(\theta)$ are precisely the smallest affine invariant submanifolds containing the unfoldings of all $d$-differentials with the same singularity type as the pillowcase double of a polygon with angles $\theta$. 

Given a $d$-differential on the sphere, it canonically unfolds to a flat surface that is invariant under a $\Z/d\Z$ action, see~\cite[Section 2]{Backgammon-k}. The quotient of the flat surface by the cyclic group is exactly the original $d$-differential. If the $d$-differential has singularities $\left( \frac{a_1 \pi}{b_1}, \hdots, \frac{a_n \pi}{b_n} \right) 2\pi$ where $d = \mathrm{lcm}\left(b_1, \hdots, b_n \right)$ then a singularity of cone angle $\frac{a_i}{b_i} \cdot 2\pi$ has $\frac{d}{b_i}$ preimages each $b_i$-ramified.

\begin{lemma}\label{L:d-hyp}
Suppose that $P$ is a $d$-differential on the sphere with cone-angles $\left( \frac{a_1}{b_1}, \hdots, \frac{a_n}{b_n} \right) \cdot 2\pi$. The deck group of the canonical covering contains a hyperelliptic involution if and only if $d := \mathrm{lcm}\left( b_1, \hdots, b_n \right) = 2k$ for some integer $k$ and (up to permutation) the cone angles are:
\[ \left( \frac{a_1}{d}  , \frac{ a_2}{d} , \frac{ a_3}{2} , \hdots, \frac{ a_n}{2}  \right) 2\pi \]
where $a_3, \hdots, a_n$ are odd, $a_1$ and $a_2$ are positive integers coprime to $k$, and $k(n-2) + m = 2g+2$ where $m$ is the number of odd integers in $\{a_1, a_2 \}$ and $g$ the genus of the canonical cover.
\end{lemma}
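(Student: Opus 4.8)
The plan is to read the lemma off from the intermediate cover determined by the candidate involution. Write the deck group of the canonical cover $X\to\bP^1$ as $\Z/d\Z=\langle\sigma\rangle$, where the base $\bP^1=X/(\Z/d\Z)$ carries the $d$-differential $P$. A cyclic group contains an involution if and only if its order is even, and then the involution is unique; so the deck group contains a hyperelliptic involution if and only if $d=2k$ is even and $\tau:=\sigma^{k}$ is hyperelliptic, i.e. if and only if $Y:=X/\langle\tau\rangle$ has genus zero. By Riemann--Hurwitz for the degree-two quotient map $X\to Y$, this holds exactly when $\tau$ has $2g+2$ fixed points, so the lemma reduces to computing $\#\operatorname{Fix}(\tau)$ and matching it against the combinatorial data.

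Since $\Z/d\Z$ acts freely off the fibers over the cone points of $P$, every fixed point of $\tau$ lies over a cone point. Over a cone point with cone angle $\frac{a_i}{b_i}\cdot 2\pi$ in lowest terms there are $d/b_i$ preimages, each with stabilizer the unique order-$b_i$ subgroup $\langle\sigma^{d/b_i}\rangle$ of $\Z/d\Z$; and $\tau=\sigma^{d/2}$ belongs to this subgroup if and only if $(d/b_i)\mid(d/2)$, i.e. if and only if $b_i$ is even. Hence
\[
\#\operatorname{Fix}(\tau)=\sum_{i\,:\,b_i\ \mathrm{even}}\frac{d}{b_i}.
\]
If the cone angles have the stated form, then each $\frac{a_j}{2}\cdot 2\pi$ with $a_j$ odd has $b_j=2$ and contributes $d/2=k$ (there are $n-2$ of these), while $\frac{a_i}{d}\cdot 2\pi$ with $a_i$ coprime to $k$ has $b_i=d$ and contributes $1$ when $a_i$ is odd (since then $\gcd(a_i,2k)=1$), and has $b_i$ an odd divisor of $k$ (which forces $k$ odd) and contributes $0$ when $a_i$ is even. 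Summing gives $\#\operatorname{Fix}(\tau)=k(n-2)+m$ with $m$ the number of odd entries among $a_1,a_2$; so the hypothesis $k(n-2)+m=2g+2$ is exactly the statement that $Y$ has genus zero. This gives the ``if'' direction.

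For the converse, assume $\tau$ is a hyperelliptic involution, so $d=2k$ and $Y\cong\bP^1$. Then $Y\to\bP^1$ is a connected cyclic cover of degree $k$ with deck group $(\Z/d\Z)/\langle\tau\rangle\cong\Z/k\Z$ and genus-zero total space, so $\Z/k\Z$ acts on $Y\cong\bP^1$ by M\"obius transformations; since a nontrivial finite-order M\"obius transformation fixes exactly two points, $Y\to\bP^1$ is branched over exactly two points of the base, which lie among the cone points, say the cone points $1$ and $2$. Around every other cone point $i$ the $\Z/k\Z$-monodromy is trivial; as it is the image of the order-$b_i$ monodromy $g_i\in\Z/d\Z$ of $X\to\bP^1$ under reduction mod $k$ (whose kernel is $\{0,k\}$), triviality forces $g_i\in\{0,k\}$, hence $b_i=2$ and $a_i$ odd. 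At the cone points $1$ and $2$ the cover $Y\to\bP^1$ is totally ramified (a connected cyclic genus-zero cover of $\bP^1$ branched over two points is totally ramified at each), so $g_i\bmod k$ generates $\Z/k\Z$; tracing this through the subgroup structure of $\Z/2k\Z$ shows $b_i\in\{k,2k\}$ and, after writing the cone angle over the common denominator $d=2k$, that the numerator is coprime to $k$. Re-running the fixed-point count and applying Riemann--Hurwitz for $X\to Y$ then turns $g(Y)=0$ into $k(n-2)+m=2g+2$, which is the asserted form.

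The main obstacle is the monodromy bookkeeping for the two ramified cone points in the converse: showing that the local monodromy in $\Z/2k\Z$ must generate either the whole group or its index-two subgroup, and converting this into the coprimality statement about the numerators written over $d$ (in particular that ``$a_i$ even'' forces $k$ odd, so that the two descriptions match up). The input that a connected genus-zero cyclic cover of $\bP^1$ is branched over exactly two points is clean once one invokes finite-order M\"obius transformations, and all the Euler-characteristic computations are routine.
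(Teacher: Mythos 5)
Your proof is correct and reaches the lemma by a route that overlaps with the paper's in one direction and genuinely differs in the other. In the ``only if'' direction your argument is essentially the paper's in different clothing: your $Y=X/\langle\tau\rangle$ is exactly the quadratic differential $(Q,q)$ the paper obtains by quotienting by the hyperelliptic involution, and your observation that the residual $\Z/k\Z$ acts by M\"obius transformations with two common fixed points, so that $Y\to\bP^1$ is branched and totally ramified over exactly two cone points, is the paper's statement that this map is $z\mapsto z^k$ up to M\"obius transformations. You then extract the local conditions (denominator $2$ with odd numerator away from the two special points; denominator $k$ or $2k$ with numerator coprime to $k$ at them) from monodromy orders in $\Z/2k\Z$, where the paper extracts them from the fact that $P$ pulls back to a quadratic differential under $z^k$ together with canonicity of the unfolding; this is the same computation, and your version makes the coprimality and the ``$a_i$ even forces $k$ odd'' bookkeeping more explicit. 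The genuine difference is in the ``if'' direction: the paper constructs the intermediate degree-$k$ cyclic cover of $P$, takes its canonical double cover, and must then argue that the composite has the same branching as, hence equals, the canonical cover of $P$; you instead work directly with the canonical cover, use that a cyclic deck group contains a (unique) involution exactly when $d$ is even, count its fixed points over cone points as $\sum_{b_i\ \mathrm{even}} d/b_i=k(n-2)+m$, and apply Riemann--Hurwitz to conclude the quotient has genus zero. This buys you a cleaner argument that avoids the identification-with-the-canonical-cover step, at the modest cost of invoking the standard description of preimages and stabilizers over cone points, which the paper records just before the lemma anyway.

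Two small remarks. When $a_i$ is even the lowest-terms denominator is exactly $k$ (not merely an odd divisor of $k$), though only its oddness is used. More substantively, in the converse the step ``$g_i\in\{0,k\}$ hence $b_i=2$ and $a_i$ odd'' silently discards $g_i=0$, i.e.\ cone points whose angle is an integer multiple of $2\pi$ and over which the canonical cover is unbranched; such points would violate the stated form. The paper's proof makes the same implicit assumption (it asserts without argument that the singularities away from $0$ and $\infty$ are odd multiples of $\pi$), and in the intended application --- pillowcase doubles of polygons, whose cone angles lie in $(0,4\pi)$ and are never $2\pi$ --- such points do not occur, so this is a shared convention rather than a new gap in your argument.
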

\begin{proof}
Suppose first that the deck group contains the hyperelliptic involution. The deck group is then a cyclic group of order $d = 2k$, since it contains an involution. Quotienting by the hyperelliptic involution gives a quadratic differential $(Q, q)$ on the sphere with $2g+2$ singularities of cone angle an odd multiple of $\pi$. The deck group descends to a cyclic group of order $k$ of automorphisms that preserve $(Q, q)$. Up to M\"obius transformations, the map from $(Q, q)$ to $P$ must be given by $f(z) = z^k$. Since $P$ pulls back to a quadratic differential under $f(z) = z^k$ it follows that the singularities at $0$ and $\infty$ have the form $\frac{a_i}{2k} \cdot 2\pi$ for some integer $a_i$ ($i = 1, 2$) and that all other singularities have the form $\frac{a_i}{2} \cdot 2 \pi$ for some integer $a_i$ ($i>3$). Since the unfolding map is canonical, $a_1$ and $a_2$ are coprime to $k$. Pulling back the $d$-differential to a quadratic differential on the sphere gives $k(n-2) + m$ points with odd cone angle, which must be Weierstrass points under the double cover. This implies that $k(n-2) + m = 2g+2$. 


For the reverse direction, the canonical cover by a quadratic differential must be the cyclic cover which is $k$ ramified over $\frac{a_1}{d}$ and $\frac{ a_2}{d}$. The canonical cover of this quadratic differential is a hyperelliptic abelian differential $(X, \omega)$ and the map from $(X, \omega)$ to $P$ has the same branching as the canonical cover of $P$; therefore this cover must be the canonical one.
\end{proof}

\begin{cor}\label{C:d-hyp}
The billiard tables whose pillowcase doubles are as in Lemma~\ref{L:d-hyp} and that have a genus two unfolding are the ones listed below.
\end{cor}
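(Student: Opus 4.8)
The plan is to enumerate, systematically, all the cone-angle data allowed by Lemma~\ref{L:d-hyp} that can produce a \emph{genus two} hyperelliptic abelian differential, and then to translate each surviving case back into the language of billiard tables. The genus constraint from Lemma~\ref{L:d-hyp} is the relation $k(n-2) + m = 2g+2 = 6$, where $k = d/2$, $n$ is the number of singularities of the $d$-differential, and $m \in \{0,1,2\}$ counts how many of $a_1, a_2$ are odd. So the first step is to solve $k(n-2) + m = 6$ over the integers subject to $k \ge 1$, $n \ge 3$ (a polygon has at least three vertices, equivalently the pillowcase double has at least three singularities — actually one must be a little careful and also allow the degenerate-looking $n$ that still correspond to genuine tables), and $m \in \{0,1,2\}$. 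This is a short finite search: $n - 2 = 1$ gives $k \in \{4,5,6\}$ with $m = 2,1,0$ respectively; $n-2 = 2$ gives $k \in \{2,3\}$ with $m = 2,0$; $n - 2 = 3$ gives $k = 2, m = 0$; $n-2 = 4,5,6$ force $k=1$, i.e. $d = 2$, which is the case of ordinary quadratic differentials on the sphere (genuinely hyperelliptic already, no cyclic cover needed), with $m$ taking the appropriate value in each.

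The second step is, for each $(k, n, m)$ on this list, to extract the actual cone angles. Lemma~\ref{L:d-hyp} tells us the angles are $\left(\frac{a_1}{d}, \frac{a_2}{d}, \frac{a_3}{2}, \dots, \frac{a_n}{2}\right)2\pi$ with $a_3, \dots, a_n$ odd and $a_1, a_2$ positive and coprime to $k$; the zero-of-an-abelian-differential / polygon-angle dictionary (a cone angle $\frac{a_i}{b_i} \cdot 2\pi$ at a singularity of the pillowcase double corresponds to a vertex angle $\frac{a_i}{b_i}\cdot\frac{\pi}{2}\cdot\frac{b_i}{b_i}$... more precisely, unfolding a polygon vertex of angle $\frac{p}{q}\pi$ produces a pillowcase singularity, and Theorem~\ref{T:criterion}'s statement already records the two families $(a,a,b,b)2\pi$ and $(a,a,b)2\pi$ that emerge) then converts each solution into a vertex-angle vector $\theta$. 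Concretely I expect the $n-2=1$, $d=2k$ cases to give the two-parameter families with angle data of the shape $\left(\frac1{2k},\frac{1}{2k}\right)$ or $\left(\frac{1}{2k},\frac{k-1}{2k}\right)$ together with a couple of $\frac12$'s — matching rows like $\left(\frac1{2n},\frac{n-1}{2n},\frac12\right)\pi$ in Figure~\ref{F:table}; the $d=2$ cases reproduce the genus-two strata families $\left(\frac1n,\frac{n-1}{n},\frac12,\frac12\right)\pi$, $\left(\frac1n,\frac1n,\frac{n-1}{n},\frac{n-1}{n}\right)\pi$, etc. For each, one checks the Gauss–Bonnet / Euler-characteristic bookkeeping (the sum of $(1 - \text{angle}/2\pi)$ over singularities of the $d$-differential equals $2$, since it lives on the sphere) to confirm the parameters $a_i$ are pinned down up to the stated freedom, and one discards any solution whose unfolding lands in genus $0$ or $1$ rather than $2$, or whose ``polygon'' is degenerate.

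The third and final step is presentation: assemble the surviving cases into the promised list (this is the content of the corollary — it has no logical content beyond Lemma~\ref{L:d-hyp} plus arithmetic, so the ``proof'' is really the tabulation, and it should be cross-checked against Figure~\ref{F:table} to make sure every genus-two hyperelliptic entry there appears and nothing spurious is added). I would also record, for each family, the range of the integer parameter $n$ for which the unfolding is genuinely genus two (small $n$ degenerates, large $n$ overshoots), since Figure~\ref{F:table} lists explicit finite ranges like $n = 3,4$ or $n = 5,6$; these come out of the same Euler-characteristic count once one also imposes that the abelian differential have exactly two zeros (stratum $\mathcal{H}(2)$ or $\mathcal{H}(1,1)$) rather than more. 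The main obstacle is not any single hard argument but bookkeeping discipline: making sure the coprimality-to-$k$ conditions, the oddness conditions on $a_3,\dots,a_n$, and the positivity/nondegeneracy of the resulting polygon are all simultaneously tracked so that the list is provably exhaustive and provably contains no redundant entries; a secondary subtlety is handling the borderline $k=1$ (pure quadratic differential) uniformly with the $k \ge 2$ cyclic-cover cases, and correctly identifying when two a priori different angle vectors $\theta$ yield the same $\M(\theta)$ via Lemma~\ref{L:Q-reduction}.
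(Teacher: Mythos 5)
Your overall route is the paper's: with $g=2$ the relation $k(n-2)+m=2g+2$ from Lemma~\ref{L:d-hyp} becomes $k(n-2)+m=6$, and the proof is nothing more than a finite enumeration of its solutions, followed by the angle-sum computation that pins down the numerators $a_i$ and the translation of cone angles back into vertex angles. So the method is sound and matches the paper; however, two of the concrete predictions in your plan are wrong, and as written they would put spurious entries into the list.

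First, the pruning of your step-one list is not generic bookkeeping but a specific parity observation that the paper uses at the outset: when $k$ is even, the requirement that $a_1$ and $a_2$ be coprime to $k$ forces both to be odd, so $m=2$; this eliminates your candidates $(k,n,m)=(6,3,0)$ and $(2,5,0)$ outright and leaves exactly $(k,n)=(2,4)$ and $(4,3)$ among even $k$. Second, and more seriously, your expectation that the $d=2$ (i.e.\ $k=1$) cases ``reproduce the genus-two strata families $\left(\tfrac1n,\tfrac{n-1}{n},\tfrac12,\tfrac12\right)\pi$, $\left(\tfrac1n,\tfrac1n,\tfrac{n-1}{n},\tfrac{n-1}{n}\right)\pi$, etc.'' is incorrect. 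The family $\left(\tfrac1n,\tfrac{n-1}{n},\tfrac12,\tfrac12\right)\pi$ has $d=2n>2$ and arises here from the solutions $(k,n,m)=(3,4,0)$ and $(2,4,2)$, while $\left(\tfrac1n,\tfrac1n,\tfrac{n-1}{n},\tfrac{n-1}{n}\right)\pi$ and the other repeated-angle families fall under Lemma~\ref{L:d-hyp2} and Corollary~\ref{C:d-hyp2} (hyperelliptic involution \emph{not} in the deck group) and are outside the scope of this corollary, which concerns only pillowcase doubles as in Lemma~\ref{L:d-hyp}. For $k=1$ the correct outcome is a single table: a vertex angle of an embedded polygon cannot be an integer multiple of $\pi$, so with $d=2$ every numerator $a_i$ is odd, forcing $m=2$, hence $n=6$, and the angle sum gives $\sum a_i = 2(n-2)=8$, whose only partition into six odd parts is $3+1+1+1+1+1$, i.e.\ the table $\left(\tfrac32,\left(\tfrac12\right)^5\right)\pi$. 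With these two corrections your enumeration closes exactly as the paper's does.
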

\begin{proof}
When $k$ is even, the condition that $a_1$ and $a_2$ are coprime to $k$ implies that $m = 2$ and so we have that $k(n-2) = 4$. The possible combinations are $(k,n) = (2, 4), (4, 3)$ and the possible singularities of $P$ are 
\[ \left( \frac14, \frac34, \frac12, \frac12 \right)\pi \qquad \left( \frac18, \frac38, \frac12 \right)\pi \]
When $k=1$, the only possibility is
\[ \left( \left( \frac12 \right)^5, \frac32 \right)\pi \]
When $k > 1$ is odd, we have that $k(n-2) + m = 6$ and so $(k,m,n) = (5, 1, 3); (3, 0, 4)$ and the tables are
\[  \left( \frac{1}{10}, \frac{4}{10}, \frac{1}{2} \right)\pi \qquad \left( \frac{2}{10}, \frac{3}{10}, \frac{1}{2} \right)\pi \qquad \left( \frac{2}{6}, \frac{4}{6}, \frac{1}{2}, \frac{1}{2} \right)\pi \]

\end{proof}

\begin{lemma}\label{L:d-hyp2}
Let $P$ be a $d$-differential on the sphere. The canonical cover contains a hyperelliptic involution that is not part of the deck group if and only if $B$ is not the double of a rectangle or the $45-45-90$ triangle and the singularities are either $\left( \frac{a_1}{d}  , \frac{ a_1}{d},  \frac{a_2}{d}  , \frac{ a_2}{d} \right) 2\pi$ or $ \left( \frac{a_1}{d}  ,  \frac{a_1}{d}, \frac{a_2}{d} \right) 2\pi$ where $a_i$ are positive integers so that $a_1$ is coprime to $d$. 
\end{lemma}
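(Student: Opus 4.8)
The plan is to exploit that, for a hyperelliptic Riemann surface $X$ of genus at least two, the hyperelliptic involution $\tau$ is unique and central in $\Aut(X)$, so that $\tau$ commutes with the deck group $G\cong\bZ/d\bZ$ of the canonical cover $\pi\colon X\to S$, where $S=X/G\cong\CP$ carries the $d$-differential $P$. First I would dispose of the case in which the canonical cover has genus $\le 1$ (so $X$ is a torus): there $X$ is an elliptic curve, its elliptic involution plays the role of a hyperelliptic involution, and one checks by a short Riemann--Hurwitz computation that among the cone-angle patterns that will occur below, this involution lies in $G$ exactly when $B$ is the double of a rectangle (the pillowcase, $d=2$) or of the $45$--$45$--$90$ triangle ($d=4$) — which is precisely why these two tables are the exceptions — while in every other low-genus case (e.g. the equilateral triangle) the involution still lies outside $G$, consistent with the statement. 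So from now on assume $g(X)\ge 2$, where $\tau$ is the unique hyperelliptic involution.

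For the forward direction, suppose $X$ is hyperelliptic with $\tau\notin G$. Centrality makes $\tau$ descend to a holomorphic involution $\bar\tau$ of $S=\CP$, which has exactly two fixed points $r_1,r_2$. The relation $\pi\circ\tau=\bar\tau\circ\pi$ gives $\pi(w)=\bar\tau(\pi(w))$ for every Weierstrass point $w$, so all $2g+2$ Weierstrass points of $X$ lie over $\{r_1,r_2\}$. Since $\tau$ commutes with the transitive $G$-action on each fibre $\pi^{-1}(r_j)$, it acts there as translation by an element of order dividing two; hence each such fibre consists entirely of Weierstrass points or contains none, and at least one of $r_1,r_2$ carries Weierstrass points. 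Meanwhile $\bar\tau$ permutes the singular points of $P$ preserving cone angle, fixing at most $r_1,r_2$ and matching the remaining singular points into pairs of equal cone angle, with no Weierstrass point lying over any non-fixed point. Writing the cone angles as $\frac{c_i}{d}\cdot 2\pi$, Riemann--Hurwitz for $\pi$ gives $2g+2=\sum_i\bigl(c_i-\gcd(c_i,d)\bigr)+4$; combining this with the all-or-nothing behaviour of the Weierstrass locus over $r_1,r_2$ and with the fact that the canonical cover is connected (so its monodromies $c_i\bmod d$ generate $\bZ/d\bZ$) forces $P$ to have three or four singular points and cone angles of the form $\bigl(\tfrac{a_1}{d},\tfrac{a_1}{d},\tfrac{a_2}{d}\bigr)2\pi$ or $\bigl(\tfrac{a_1}{d},\tfrac{a_1}{d},\tfrac{a_2}{d},\tfrac{a_2}{d}\bigr)2\pi$. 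Since then $2a_1+a_2=d$ or $a_1+a_2=d$, one has $\gcd(a_1,a_2,d)=\gcd(a_1,d)$, which connectedness forces to equal $1$; and $B$ is neither exceptional table because for those the relevant involution lies in $G$, against the hypothesis.

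For the converse, suppose $P$ has one of the two cone-angle patterns with $\gcd(a_1,d)=1$ and $B$ is not one of the two excluded tables. Placing the singular points at $p_1,p_2,p_3$ (and $p_4$), the canonical cover is $y^d=\prod_i(z-p_i)^{c_i}$; dividing out a suitable $d$-th power and replacing $y$ by $y^{b}$ with $a_1b\equiv 1\pmod d$ turns this into
\[
X\colon\quad w^d=u(z),\qquad u=\frac{(z-p_1)(z-p_2)}{(z-p_3)(z-p_4)}\ \text{ or }\ \frac{(z-p_1)(z-p_2)}{(z-p_3)^2},
\]
where $u\colon\CP\to\CP$ has degree two. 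The projection $(z,w)\mapsto w$ exhibits $X$ as a double cover of $\CP$, so $X$ is hyperelliptic, with hyperelliptic involution the deck transformation $(z,w)\mapsto(\bar\tau z,w)$, where $\bar\tau$ is the degree-two deck involution of $u$; this plainly does not lie in $G=\{(z,w)\mapsto(z,\zeta w):\zeta^d=1\}$. A Riemann--Hurwitz count from either model confirms $g(X)\ge 2$ in all non-excluded cases (and shows that the only patterns yielding $g(X)\le 1$ with the involution inside $G$ are the pillowcase and the $45$--$45$--$90$ double), which is what remained to be seen.

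The main obstacle is the combinatorial heart of the forward direction: extracting from Riemann--Hurwitz, the all-or-nothing behaviour of the Weierstrass locus over the two fixed points of $\bar\tau$, and the connectedness of the canonical cover the conclusion that $P$ can have at most four singular points and must carry the symmetric pattern. Everything else — the centrality and descent of $\tau$, the explicit model $w^d=u(z)$ in the converse, and the genus bookkeeping that isolates the two excluded tables — is routine.
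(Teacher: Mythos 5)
Your converse is correct and is in fact a cleaner route than the paper's: using $\gcd(a_1,d)=1$ to renormalize the cyclic cover to $w^d=u(z)$ with $u$ of degree two and projecting to $w$ produces the hyperelliptic involution explicitly, whereas the paper quotients $P$ by its involution, applies Lemma~\ref{L:d-hyp} to the resulting $2d$-differential, and lifts the hyperelliptic involution by covering space theory; your treatment of the excluded tables and of the genus $\le 1$ degenerations is only asserted, but that bookkeeping is genuinely routine.

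The forward direction, however, has a real gap at exactly the step you flag as ``the main obstacle,'' and it is not mere combinatorics: the constraints you list --- Weierstrass points lying only over the two fixed points $r_1,r_2$ of $\bar\tau$, the all-or-nothing structure of those fibres, Riemann--Hurwitz $2g+2=\sum_i\bigl(c_i-\gcd(c_i,d)\bigr)+4$, the $\bar\tau$-pairing of the remaining singularities, and connectedness of the cover --- do not force the conclusion. Concretely, take $d=4$ and cone angles $\left(\tfrac14,\tfrac14,\tfrac14,\tfrac14,2\right)\cdot 2\pi$ (four order-$3$ poles and one order-$4$ zero of the $4$-differential), with $\bar\tau$ fixing the $4\pi$-point and one regular point and swapping the $\tfrac{\pi}{2}$-points in pairs: the monodromies $1,1,1,1,0$ generate $\bZ/4\bZ$, the canonical cover has genus $3$, and declaring both $\bar\tau$-fixed fibres (each of size $4$, both unramified) to consist of Weierstrass points gives exactly $2g+2=8$; every constraint on your list is satisfied, yet there are five singularities, one of them at a fixed point of $\bar\tau$, and the pattern is not the symmetric one. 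No such hyperelliptic structure exists --- here because the canonical cover $w^4=\prod_i(z-p_i)$ is a smooth plane quartic --- but nothing in your list of ingredients detects this. The missing idea is the one the paper's proof runs on: since $\tau\notin G$ is central, $G$ acts faithfully on $X/\tau\cong\bC\mathrm{P}^1$ as a cyclic group of M\"obius transformations, so the induced map $X/\tau\to P/\bar\tau$ is $z\mapsto z^d$, branched over only two points; chasing local degrees around the commutative square formed by the canonical cover $f$, the hyperelliptic quotient $J$, the quotient $\pi\colon P\to P/\bar\tau$, and $z^d$ forces every branch point of $f$ to lie over those two points of $P/\bar\tau$, which is what caps the singular set of $P$ at four points away from the fixed points of $\bar\tau$ and yields the symmetric angle pattern. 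Without this (or an equivalent substitute exploiting the quotient $X/\tau$), your forward direction does not close.
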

\begin{proof}
Suppose first that the canonical cover contains a hyperelliptic involution $\iota$ that is not part of the deck group. Let $J: (X, \omega) \ra \left( \mathbb{C}\mathrm{P}^1, q \right)$ be the quotient of $(X, \omega)$ by $\iota$; in other words, $\left( \mathbb{C}\mathrm{P}^1, q\right)$ is a half-translation structure on the sphere whose holonomy double cover is $(X, \omega)$. The hyperelliptic involution then descends to an involution that acts by $-1$ on the $d$-differential $P$. The quotient of $P$ by the involution gives a double branched cover $\pi: P \ra P'$ to a $2d$-differential on the sphere $P'$. Since $\iota$ commutes with the action of the deck group of the canonical cover $f: (X, \omega) \ra P$ it follows that we have the following commutative diagram, shown in Figure~\ref{F:cd1}.

\begin{figure}[h]
	\begin{tikzcd}
		P \arrow[d, "\pi"] & (X, \omega) \arrow[l, "f"] \arrow[d, "J"] \\
		P' & \left( \C\mathrm{P}^1, q \right) \arrow[l, "z^d"]
	\end{tikzcd}
\caption{The maps between the differentials}
\label{F:cd1}	
\end{figure}

\noindent The involution on $P$ is a holomorphic involution on the sphere and hence fixes exactly two points. From the commutative diagram we have that the branch locus of $f$ contains only the preimages of $0$ and $\infty$ under $\pi$. This implies that the branch locus of $f$ contains at most four points. Since the branch locus of $f$ is the collection of singularities of $P$, it follows that $P$ and $P'$ have at most four singularities. The singularities of $P'$ are therefore
\[ \left( \frac{a_1}{2d}, \frac{a_2}{2d}, \frac{a_3}{2}, \frac{a_4}{2} \right) 2\pi \qquad \text{or} \qquad \left( \frac{b_1}{2d}, \frac{b_2}{2d}, \frac{b_3}{2}\right) 2\pi \]
where the first two points lie at $\{0, \infty\}$. Notice that the branch locus in $P'$ must exclusively contain points in $\{0, \infty\}$ or singularities because $J$ is only branched over singularities (and hence the statement about $\pi$ follows from the commutative diagram). Since the sum of the cone angles must be $(n-2)\pi$ where $n$ is the number of singularities we see that $a_3, a_4,$ and $b_3$ are odd integers (we just argued that these numbers are not $2$ and the next observation shows that they all must be less than $4$). Therefore, $J$ is branched over the preimages of these points and hence $\pi$ is branched over them as well. The singularities of $P$ are therefore, respectively,
\[ \left( \frac{a_1}{2d}, \frac{a_1}{2d}, \frac{a_2}{2d}, \frac{a_2}{2d} \right) \qquad \text{or} \qquad \left( \frac{b_1}{2d}, \frac{b_1}{2d}, \frac{b_2}{d} \right) \]
Since each vertex has at most $d$ preimages under $f$ we see that, perhaps after choosing new integers $a_i$ and $b_i$, the singularities of $P$ are 
\[ \left( \frac{a_1}{d}, \frac{a_1}{d}, \frac{a_2}{d}, \frac{a_2}{d} \right) \qquad \text{or} \qquad \left( \frac{b_1}{d}, \frac{b_1}{d}, \frac{b_2}{d} \right) \]
and the singularities of $P'$ are 
\[ \left( \frac{a_1}{d}, \frac{a_2}{d}, \frac{1}{2}, \frac{1}{2} \right) 2\pi \qquad \text{or} \qquad \left( \frac{b_1}{d}, \frac{b_2}{2d}, \frac{1}{2}\right) 2\pi \]
where the first two singularities lie at $\{0, \infty\}$ and $\pi$ is branched over the last two. The first two singularities in $P'$ have at most two preimages on $(X, \omega)$ and hence $a_1$ and $b_1$ are coprime to $d$.


For the reverse direction, both of the $d$-differentials described have an involution so that the quotient map $\pi: P \ra P'$ sends $P$ to a $2d$-differential $P'$ with singularities $\theta' = \left( \frac{a_1}{d}  , \frac{ a_2}{d},  \frac{1}{2}, \frac{1}{2} \right) 2\pi$ or $\theta' =  \left( \frac{a_1}{d}  , \frac{ a_2}{2d},  \frac{1}{2} \right)$ where $a_1$ is a positive integer coprime to $d$. 

Suppose first that $d$ is odd. Then the abelian differential that canonically covers $P'$ is a degree $2d$ cover of $P'$ since this is the least common multiple of the denominators. Since $(X, \omega)$ is a degree $2d$ cover of $P'$, it must be the canonical cover. By Lemma~\ref{L:d-hyp}, $\M(\theta')$ is contained in a hyperelliptic locus. Since $\M(\theta) = \M(\theta')$, $\M(\theta)$ is contained in a hyperelliptic locus and the hyperelliptic involution is not contained in the deck group since the deck group has odd order.

Suppose now that $d=2k$ for some integer $k$. The canonical covering $g: (Y, \eta) \ra P'$ is also degree $d$. Since $\pi \circ f: (X, \omega) \ra P'$ is a holomorphic map that pulls back $P'$ to $(X, \omega)$ it follows that there is a map $p: (X, \omega) \ra (Y, \eta)$ so that $\pi \circ f = p \circ g$. By Lemma~\ref{L:d-hyp} it follows that $g$ factors through the hyperelliptic involution $J'$ and the map $f(z) = z^k$ on the sphere. The situation is summarized in the commutative diagram in Figure~\ref{F:cd2}

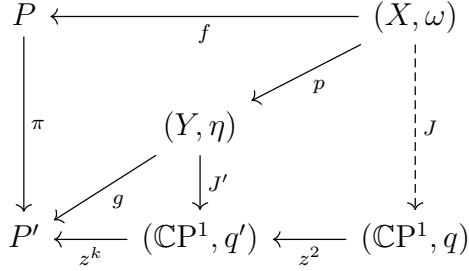
\begin{figure}[h]
	\begin{tikzcd}
		P \arrow[dd, "\pi"] & & (X, \omega) \arrow[ll, "f"] \arrow[dashed, dd, "J"] \arrow[dl, "p"] \\
		  & (Y, \eta) \arrow[dl, "g"] \arrow[d, "J'"] &  \\
		P' & \left( \C\mathrm{P}^1, q' \right) \arrow[l, "z^k"] & \left( \C\mathrm{P}^1, q \right) \arrow[l, "z^2"]
	\end{tikzcd}
\caption{The maps between the differentials when $d$ is even}
\label{F:cd2}	
\end{figure}

\noindent We wish to produce a map $J$ that makes the diagram commute. Since we have that $p$ is a translation covering we see that $p$ is unbranched when $n = 4$ and when $n = 3$ and $k$ is even. When $n = 3$ and $k$ is odd, $p$ is branched over the two preimages of the vertex of angle $\frac{a_1}{d}$ on $P'$. Covering space theory implies that $J'$ lifts to the desired map $J: (X, \omega) \ra \left( \C \mathrm{P}^1, q \right)$. Therefore, $(X, \omega)$ is hyperelliptic.

Suppose now that $f$ factors though $J$. If it did then, after a M\"obius transformation, the map $h(z) = z^k$ would satisfy $f = h \circ J$ and the pullback of $P$ under $h$ would be $(\mathbb{C}\mathrm{P}^1, q)$, the quotient of $(X, \omega)$ by $J$. In particular, this would mean that two of the angles on $P$ were $\frac{\pi}{2}$. This means that either $P$ is a rectangle or the triangle $\left( \frac{1}{2}, \frac{1}{4}, \frac{1}{4} \right)$. 
\end{proof}

\begin{rem}
If the singularities of $P$ are $\left( \frac{a_1}{d}, \frac{a_1}{d}, \frac{a_2}{d}, \frac{a_2}{d} \right) 2\pi$ then $P$ unfolds to a genus $d-1$ surface in $\mathcal{H}(a_1-1, a_1 - 1, a_2 - 1, a_2 - 1)$. If the singularities of $P$ are $\left( \frac{a_1}{d}, \frac{a_1}{d}, \frac{a_2}{d} \right)2\pi$ then when $d$ is even the unfolding is a genus $\frac{d-2}{2}$ surface in $\mathcal{H}\left(a_1 - 1, a_1 - 1, \frac{a_2}{2} - 1, \frac{a_2}{2} - 1 \right)$ and when $d$ is odd the unfolding is a genus  $\frac{d-1}{2}$ surface in $\mathcal{H}(a_1-1, a_1-1, a_2-1)$
\end{rem}

From the remark we have the following immediate corollary:

\begin{cor}\label{C:d-hyp2}
The surfaces in Lemma~\ref{L:d-hyp2} have a genus two unfolding only when the singularities of $P$ are 
\[ \left( \frac13, \frac13, \frac23, \frac23 \right)2\pi \qquad \left( \frac15, \frac15, \frac35 \right)2\pi \qquad \left( \frac25, \frac25, \frac15 \right)2\pi \qquad \left( \frac16, \frac16, \frac46 \right)2\pi \]
\end{cor}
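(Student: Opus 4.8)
The plan is to deduce the corollary from the genus formula recorded in the remark immediately preceding it, combined with the Gauss--Bonnet constraint on cone angles, so that everything collapses to a bounded arithmetic search. The starting point is the elementary fact that a flat metric on the sphere with cone angles $c_1\cdot 2\pi,\dots,c_n\cdot 2\pi$ satisfies $\sum_{i=1}^n c_i = n-2$, since $\sum_i\left(2\pi - c_i\cdot 2\pi\right)=4\pi$. I would apply this to each of the two cone-angle shapes permitted by Lemma~\ref{L:d-hyp2}.

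In the four-singularity case the data is $\left(\frac{a_1}{d},\frac{a_1}{d},\frac{a_2}{d},\frac{a_2}{d}\right)2\pi$ with $a_1$ coprime to $d$. The remark says the unfolding has genus $d-1$, so demanding genus two forces $d=3$, and then Gauss--Bonnet reads $a_1+a_2=3$. Together with $a_i>0$ and $\gcd(a_1,3)=1$ this leaves only $\{a_1,a_2\}=\{1,2\}$, i.e.\ the datum $\left(\frac13,\frac13,\frac23,\frac23\right)2\pi$.

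In the three-singularity case the data is $\left(\frac{a_1}{d},\frac{a_1}{d},\frac{a_2}{d}\right)2\pi$, so Gauss--Bonnet gives $2a_1+a_2=d$. If $d=2k$ the remark gives genus $\frac{d-2}{2}$, so genus two forces $d=6$, and $2a_1+a_2=6$ with $a_2>0$ and $\gcd(a_1,6)=1$ has the unique solution $(a_1,a_2)=(1,4)$, giving $\left(\frac16,\frac16,\frac46\right)2\pi$. If $d$ is odd the remark gives genus $\frac{d-1}{2}$, so genus two forces $d=5$, and $2a_1+a_2=5$ with $a_2>0$ and $\gcd(a_1,5)=1$ has exactly the solutions $(1,3)$ and $(2,1)$, giving $\left(\frac15,\frac15,\frac35\right)2\pi$ and $\left(\frac25,\frac25,\frac15\right)2\pi$. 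No other value of $d$ produces genus two, so (up to permutation) these four tuples are precisely the cone-angle data that can occur; conversely each one is realized by the reverse implication of Lemma~\ref{L:d-hyp2}.

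There is no real obstacle here: once the genus is pinned to $2$ the search is finite, and the only points that need care are transcribing the sum-of-cone-angles identity correctly in each family and invoking $\gcd(a_1,d)=1$ (which holds because the unfolding is canonical) together with positivity of the $a_i$ to cut the list down to the four displayed tuples. One may also note that the rectangle and the $45$--$45$--$90$ triangle excluded in the hypothesis of Lemma~\ref{L:d-hyp2} unfold in genus one, so their exclusion removes no genus two example.
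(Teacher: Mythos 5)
Your derivation is correct and is exactly the argument the paper intends: the paper declares the corollary ``immediate'' from the preceding remark, and your computation (genus formula pins $d\in\{3,5,6\}$, the cone-angle sum pins $a_1+a_2=d$ resp.\ $2a_1+a_2=d$, and positivity plus $\gcd(a_1,d)=1$ cuts the list to the four displayed tuples) is precisely that omitted arithmetic. The added converse remark and the note about the excluded rectangle and $45$--$45$--$90$ triangle are harmless extras not needed for the ``only if'' statement.
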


Notice that Theorem~\ref{T:criterion} is immediate from Lemmas~\ref{L:d-hyp} and~\ref{L:d-hyp2}. 

%
%

\section{Proof of Theorem~\ref{T:final} - Billiard Tables Unfolding to Genus Two Surfaces}~\label{S:Genus2} 

We will now solve the finite blocking problem for rational polygons that unfold to genus two translation surfaces. 

\begin{defn}
A rational polygon that unfolds to a hyperelliptic Riemann surface will be called ``special" if the group of deck transformations does not include the hyperelliptic involution
\end{defn}

The list of special rational polygons is given in Lemma~\ref{L:d-hyp2}. 

\begin{prop}\label{P:FBP2}
Suppose that $B$ is a polygon with connected boundary with rational angles $\theta = \left( \frac{a_1}{b_1}, \hdots, \frac{a_n}{b_n} \right) \cdot \pi$ that unfolds to a hyperelliptic Riemann surface. Suppose that any point $p$ on the unfolding of $B$ is finitely blocked from no points if it is a zero and only finitely blocked from its image under the hyperelliptic involution otherwise. Define $d:= \mathrm{lcm}(b_1, \hdots, b_n)$. The following is a complete list of the pairs of finitely blocked points on $B$:
\begin{enumerate}
\item If $B$ is not special any vertex of angle $\frac{\pi}{d}$ is finitely blocked from itself.
\item If $B$ is special any two distinct vertices of angle $\frac{\pi}{d}$ are finitely blocked from each other.
\item If $B$ is an isosceles triangle and the non-repeated angle has the form $\frac{\pi}{d}$, then this vertex is finitely blocked from itself.
\end{enumerate}
\end{prop}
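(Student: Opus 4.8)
The plan is to push the finite blocking information on the unfolding down to the polygon. Recall that if the angles of $B$ in lowest terms are $\left(\tfrac{a_1}{b_1},\dots,\tfrac{a_n}{b_n}\right)\pi$ then $B$ unfolds to a translation surface $(X,\omega)$ tiled by $2d$ copies of $B$, with a folding map $\Phi\colon (X,\omega)\to B$, and a billiard trajectory in $B$ from $p$ to $q$ is exactly the $\Phi$-image of a straight line in $(X,\omega)$ joining a point of $\Phi^{-1}(p)$ to a point of $\Phi^{-1}(q)$. Hence $p$ is finitely blocked from $q$ in $B$ if and only if every point of $\Phi^{-1}(p)$ is finitely blocked from every point of $\Phi^{-1}(q)$ on $(X,\omega)$; one passes between blocking sets upstairs and downstairs by taking $\Phi$-images and $\Phi$-preimages, the only subtlety being that a blocking set upstairs should be chosen disjoint from $\Phi^{-1}(\{p,q\})$, which will be automatic in the cases that arise since there the relevant fibers are single points. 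The hypothesis is never satisfied when $(X,\omega)$ is a torus cover (on such a surface every pair of non-singular points is finitely blocked, and the deck involution over the torus is not the hyperelliptic one), so we may assume $(X,\omega)$ is primitive of genus two and $\tau$ is its hyperelliptic involution.

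Next I would feed in the assumption. Together with Remark~\ref{R:two-piers} it says that, for points $\tilde p,\tilde q$ of $(X,\omega)$, $\tilde p$ is finitely blocked from $\tilde q$ if and only if $\tilde p$ is not a zero of $\omega$ and $\tilde q=\tau(\tilde p)$. Since $\tau$ is a bijection, combining this with the first paragraph yields the criterion: $p$ is finitely blocked from $q$ in $B$ if and only if $\Phi^{-1}(p)=\{\tilde p\}$ and $\Phi^{-1}(q)=\{\tilde q\}$ are singletons, $\tilde p$ and $\tilde q$ are not zeros, and $\tau(\tilde p)=\tilde q$. It remains to locate the points of $B$ with a single non-zero preimage and to compute the $\tau$-action on them. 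An interior point of $B$ has $2d$ preimages and a non-vertex boundary point has $d\ge 2$ preimages, so only vertices can qualify; the vertex of angle $\tfrac{a_i}{b_i}\pi$ has $d/b_i$ preimages (as recorded before Lemma~\ref{L:d-hyp}), each a cone point of angle $a_i\cdot 2\pi$. Thus the fiber is a single point exactly when $b_i=d$, and that point fails to be a zero exactly when $a_i=1$: the candidates are precisely the vertices of angle $\tfrac{\pi}{d}$, each having a single preimage $\tilde v$, a regular point of $(X,\omega)$ that is totally ramified over the cone point $w_v$ of angle $\tfrac{2\pi}{d}$ of the pillowcase double $P$.

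The crux is then to decide, for such a vertex $v$, whether $\tau(\tilde v)=\tilde v$ or $\tau$ exchanges $\tilde v$ with the preimage of a different angle-$\tfrac\pi d$ vertex. Since the hyperelliptic involution is central in the automorphism group in genus two, $\tau$ commutes with the $\mathbb{Z}/d\mathbb{Z}$ deck group of the canonical cover $(X,\omega)\to P$ and so descends to an involution $\bar\tau$ of $P$; as $\tilde v$ is the unique preimage of $w_v$, the point $\tau(\tilde v)$ is the unique preimage of $\bar\tau(w_v)$, so everything is governed by how $\bar\tau$ permutes the angle-$\tfrac{2\pi}{d}$ cone points of $P$. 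If $B$ is not special then $\tau\in\mathbb{Z}/d\mathbb{Z}$, so $d$ is even and $(X,\omega)\to P$ factors as $(X,\omega)\to (X,\omega)/\tau\xrightarrow{\,z\mapsto z^{d/2}\,}P$; multiplicativity of ramification indices forces $\tilde v$ to be ramified of index $2$ over $(X,\omega)/\tau$, i.e.\ $\tau$-fixed, so $\tilde v$ is a non-zero Weierstrass point and $v$ is finitely blocked from itself and from nothing else, giving~(1). If $B$ is special, Lemma~\ref{L:d-hyp2} exhibits $B$ as a quadrilateral $\left(\tfrac{a_1}{d},\tfrac{a_1}{d},\tfrac{a_2}{d},\tfrac{a_2}{d}\right)\pi$ or an isosceles triangle $\left(\tfrac{a_1}{d},\tfrac{a_1}{d},\tfrac{a_2}{d}\right)\pi$ with $\gcd(a_1,d)=1$, and identifies $\bar\tau$ as the deck transformation of the double cover $P\to P'$ of that lemma; reading off its branch locus, $\bar\tau$ swaps the two $\tfrac{a_1}{d}$-cone-points of $P$ (and, in the quadrilateral case, also the two $\tfrac{a_2}{d}$-cone-points), whereas in the triangle case it fixes the $\tfrac{a_2}{d}$-cone-point. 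Consequently, if two vertices have angle $\tfrac\pi d$ — necessarily a single $\bar\tau$-swapped pair, since a third would force $d=2$ and the excluded rectangle or equilateral triangle — then $\tau$ exchanges their preimages, so they are finitely blocked from each other and from nothing else, giving~(2); and if exactly one vertex has angle $\tfrac\pi d$, it must be the apex ($a_2=1$) of the triangle $\left(\tfrac{a_1}{d},\tfrac{a_1}{d},\tfrac1d\right)\pi$, where $\bar\tau$ fixes $w_v$ and hence $\tau$ fixes $\tilde v$, so $v$ is finitely blocked from itself, giving~(3) (which, for a non-special such triangle, is already contained in~(1)). Since by the criterion vertices with more than one preimage are never finitely blocked, and vertices of angle $\tfrac{a_i}{d}\pi$ with $a_i\ge 2$ unfold to zeros, the list~(1)--(3) is complete.

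I expect the third paragraph to be the real work: correctly tracking the commuting involution $\bar\tau$ through the towers $(X,\omega)\to P\to P'$ and $(X,\omega)\to (X,\omega)/\tau\to P$ and pinning down exactly which cone points of $P$ it fixes and which it swaps — which is precisely what the branching data assembled in Lemmas~\ref{L:d-hyp} and~\ref{L:d-hyp2} is for. A smaller but genuine point is the folklore reduction in the first paragraph, including the treatment of trajectories through corners and the freedom to choose blocking sets disjoint from the fibers of $p$ and $q$.
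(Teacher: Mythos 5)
Your argument is correct and takes essentially the same route as the paper: reduce blocking in $B$ to blocking between fibers on the unfolding, note that only angle-$\frac{\pi}{d}$ vertices have a single nonsingular preimage, and then use the special/non-special dichotomy of Lemmas~\ref{L:d-hyp} and~\ref{L:d-hyp2} to decide whether that preimage is a Weierstrass point or is exchanged with another such preimage by the hyperelliptic involution. Your third paragraph (descending $\tau$ to $\bar\tau$ and the ramification-index count) simply makes explicit the step the paper asserts directly from those lemmas.
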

\begin{proof}
Suppose first that $p$ and $q$ are two distinct points on $B$ that are finitely blocked from each other. Let $\wt{p}$ be a preimage of $p$ on the unfolding of $B$. Since $\wt{p}$ is finitely blocked from at most one other point it follows that there is exactly one preimage of $q$ on the unfolding of $B$. By symmetry of hypotheses, there must also be exactly one preimage of $p$ as well. A point on $B$ has exactly one preimage on the unfolding if and only if it corresponds to a vertex of angle $\frac{\pi}{d}$. There are two vertices of cone angle $\frac{\pi}{d}$ only if $d  = 2$, $d = 4$, or $B$  is special. If $B$ is not special then the preimage of $p$ is a Weierstrass point and hence only finitely blocked from itself, a contradiction since it is also blocked from the preimage of $q$. Therefore, $B$ is special and the two vertices of angle $\frac{\pi}{d}$ unfold to two points exchanged by the hyperelliptic involution, which are hence finitely blocked from each other. 

Suppose now that $p$ is a point on $B$ that is finitely blocked from itself. By hypothesis, $\wt{p}$ must contain one non-singular Weierstrass point and hence $p$ is a vertex of angle $\frac{\pi}{d}$. Such a point becomes a nonsingular Weierstrass point when $B$ is not special or if $B$ is an isosceles triangle and the vertex is the one corresponding to the non-repeated angle.
\end{proof}

\begin{proof}[Proof of Theorem~\ref{T:final}]
The list of rational polygons with connected boundary that unfold to a genus two translation surface in Figure~\ref{F:table} is complete by Corollaries~\ref{C:d-hyp} and~\ref{C:d-hyp2}.

We begin with the claim about when an unfolding is a torus cover. By Apisa-Wright~\cite[Theorem 1.1]{Apisa-Wright} (see also Proposition 3.10 in Apisa-Wright~\cite{Apisa-Wright}), if a rational polygon unfolds to a torus cover it in fact unfolds to a square tiled surface. This implies that the $\Q$-linear span of the relative periods is a two-dimensional $\Q$-vector space. Therefore, the ratio of lengths $\frac{x_1}{x_2}$ and $\frac{y_1}{y_2}$ must be rational. For the converse, we observe that when these ratios are rational we may scale the figure so that the relative periods of the unfolding lie in either the Gaussian or Eisenstein integers, which implies that the unfolding is a square-tiled surface.

The claim about finite blocking for polygons that unfold to genus two translation surfaces follows from Proposition~\ref{P:FBP2} and Theorem~\ref{T3} (which certifies that the hypotheses in the statement of the proposition are satisfied). 

The computation of $\M(\theta)$ in Figure~\ref{F:table} follows in all cases from either the definition of the given locus or from the ``trivial rank bound" in Mirzakhani-Wright~\cite[Lemma 7.1]{MirWriRank}, which states that if there is some entry of $\theta$ that is not an integer multiple of $\frac{\pi}{2}$, then $\M(\theta)$ has rank at least $|\theta|-2$ where $|\theta|$ is the length of $\theta$. In genus two strata of translation surfaces the only affine invariant submanifold that has rank two is the entire stratum itself.
\end{proof}

%
%

\section{Similar Cylinders}~\label{S:SimilarCylinder}

It remains to show that there is at most one non-arithmetic rank two affine invariant submanifold in $\mathcal{H}(6)$. Before proving that statement, we first prove an independently interesting cylinder lemma. The idea for the lemma arose from conversations with Alex Wright. 

We begin by summarizing some facts about cylinder deformations. Fix an affine invariant submanifold $\M$. By Avila-Eskin-M\"oller~\cite{AEM}, the projection of the tangent space of $\M$ to absolute cohomology is complex symplectic. The rank of $\M$ is defined to be half the complex dimension of this projection. The affine invariant submanifold is said to have $d$ dimensions of rel, if $d = \dim_\C \M - 2r$ where $r$ is the rank of $\M$.

On a translation surface in $\M$, two cylinders are said to be equivalent if on all nearby translation surfaces in $\M$ their core curves are parallel. A maximal set of equivalent cylinders is called an equivalence class of cylinders. 

\begin{defn}
Fix an equivalence class of cylinders with core curves $\{\gamma_i\}_{i=1}^n$ - all of whose periods differ up to scaling by a positive real number - and heights $\{h_i\}_{i=1}^n$. The standard shear, which is defined up to scaling by a nonzero complex scalar, refers to the cohomology class $\sum_{i=1}^n h_i \gamma_i^*$ where $\gamma_i^*$ is the cohomology class corresponding to intersections with $\gamma_i$ for $i = 1, \hdots, n$.
\end{defn}

By Wright~\cite[Theorem 1.1]{Wcyl}, the standard shear always belongs to the tangent space of $\M$. By Wright~\cite[Theorem 1.10]{Wcyl}, if $\M$ is a rank $r$ affine invariant submanifold then there is always a translation surface $(X, \omega)$ in $\M$ that is horizontally periodic, with $r$ horizontal equivalence classes, and so that the standard shears of the horizontal equivalence classes project to a Lagrangian subspace in the projection of $T_{(X, \omega)} \M$ to absolute cohomology. These properties are actually implied by a stronger condition, i.e. that the ``twist space and cylinder preserving space of $(X, \omega)$ coincide", see Wright~\cite{Wcyl} for a definition.

\begin{defn}
Two cylinders $C_1$ and $C_2$ on a translation surface are similar if the two cylinders, with boundaries marked at cone points, are rescaled versions of each other, i.e. if there is some positive real number $\lambda$ so that $C_1 = \begin{pmatrix} \lambda & 0 \\ 0 & \lambda \end{pmatrix} C_2$. 
\end{defn}

The aim of this section is to show that if two simple cylinders form an equivalence class on a translation surface whose orbit closure has no rel, then the two cylinders are similar. 

\begin{ass}
Let $\M$ be an affine invariant submanifold with no rel and that has rank $r > 1$. 
\end{ass}

\begin{lemma}\label{L:EC}
Let $\cC$ be an $\M$-equivalence class of cylinders on a translation surface $(X, \omega) \in \M$. Let $U$ be a connected neighborhood of $(X, \omega)$ in $\M$ on which the cylinders in $\cC$ persist (i.e. their heights never reach zero). Then for each surface in $U$, $\cC$ remains an $\M$-equivalence class of cylinders.
\end{lemma}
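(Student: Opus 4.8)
The plan is to show that $\M$-equivalence of persisting cylinders is controlled entirely by the tangent space of $\M$ and the homology classes of the core curves, both of which do not vary along $U$, and then to upgrade this to maximality by a connectedness argument. Since the statement is local along paths in $U$, I may assume $U$ carries a single period coordinate chart $U\hookrightarrow H^1(X,\Sigma;\C)$, where $\Sigma$ is the set of zeros of $\omega$. On such a chart $\M$ is an open subset of an affine subspace, by the local linearity of affine invariant submanifolds; write $T\subseteq H^1(X,\Sigma;\C)$ for its tangent space, which is then the same subspace for every $z\in U$. Because the cylinders of $\cC$ persist on $U$, they define continuously varying cylinders $C_1(z),\dots,C_k(z)$ whose core curves are fixed absolute homology classes $\gamma_1,\dots,\gamma_k$; each $\gamma_i$ gives a $\C$-linear holonomy functional $\int_{\gamma_i}\colon T\to\C$ which does not depend on $z$.

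The key observation is that two persisting cylinders $C_i(z)$ and $C_j(z)$ are $\M$-equivalent at $z$ if and only if $\int_{\gamma_i}$ and $\int_{\gamma_j}$ are real-proportional as functionals on $T$. One direction is clear. For the other, on the open subset of $T$ where $\int_{\gamma_j}\neq 0$ (which contains $z$, since $C_j$ has positive circumference) the ratio $\int_{\gamma_i}/\int_{\gamma_j}$ is holomorphic, and $\M$-equivalence forces it to take real values on a neighborhood of $z$ in $\M$; a holomorphic function that is real on a nonempty open set is locally constant, so the ratio equals its value at $z$, a nonzero real number, and hence $\int_{\gamma_i}=c\int_{\gamma_j}$ on all of $T$. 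Since $T$ and the classes $\gamma_i$ are independent of $z$, this criterion shows that the pairwise $\M$-equivalence relation among $C_1(z),\dots,C_k(z)$ is the same for every $z\in U$; as these cylinders are pairwise equivalent at $(X,\omega)$, they stay pairwise equivalent throughout $U$.

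It remains to prove that $\{C_1(z),\dots,C_k(z)\}$ remains maximal, i.e.\ that no additional cylinder on a surface in $U$ becomes $\M$-equivalent to it. Let $V\subseteq U$ be the set of $z$ at which $\cC$ is a full $\M$-equivalence class; it contains $(X,\omega)$, so by connectedness it is enough to show $V$ is open and closed in $U$. Closedness is soft: if $z_n\to z_\infty$ with $z_n\in V$ and some cylinder $D$ on $z_\infty$ were $\M$-equivalent to $C_1(z_\infty)$ but distinct from all $C_i(z_\infty)$, then $D$ persists near $z_\infty$ and, by the criterion above applied near $z_\infty$, remains equivalent to $C_1$; for large $n$ this forces $D=C_i(z_n)$ for some $i$, hence $D=C_i(z_\infty)$, a contradiction. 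Openness is the heart of the matter: one must exclude a sequence of cylinders $D_n$ on $z_n\to z_1\in V$, each $\M$-equivalent to $C_1(z_n)$ but not among the $C_i(z_n)$. The relation $\int_{\gamma_{D_n}}=c_n\int_{\gamma_1}$ on $T$ with $c_n>0$, together with compactness of the relevant family of surfaces, bounds the circumferences of the $D_n$ above and below; the one remaining possibility is that the heights of the $D_n$ collapse, and this is precisely where I would use the hypothesis that $\M$ has no rel, which ties the heights of cylinders in a single equivalence class together and so bounds $\operatorname{height}(D_n)$ below by a positive multiple of $\operatorname{height}(C_1(z_n))$. With escape ruled out, the $D_n$ subconverge to a cylinder on $z_1$ that is $\M$-equivalent to $C_1(z_1)$ and not among the $C_i(z_1)$, contradicting $z_1\in V$.

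I expect the main obstacle to be exactly this last point: ruling out that a hypothetical extra $\M$-equivalent cylinder degenerates under perturbation within $U$, which is the only step that genuinely uses the no-rel assumption; everything else follows softly from the local linearity of $\M$ and the holomorphicity of period coordinates. A possible shortcut for maximality is instead to transport such an extra cylinder $D$ along a path in $U$ back to $(X,\omega)$, using that $\int_{\gamma_D}=c\int_{\gamma_1}$ on the fixed subspace $T$ to contradict maximality of $\cC$ at $(X,\omega)$ directly — the gap there being that $D$ need not persist all the way back, which again would have to be controlled using the no-rel hypothesis.
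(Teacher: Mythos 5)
Your first step---transporting pairwise $\M$-parallelism across $U$ via the fixed tangent space in period coordinates---is essentially fine, with one small correction: $\M$ is locally an open subset of a complex \emph{linear} (not merely affine) subspace defined over $\R$, and you need this for the direction you call ``clear,'' since real-proportionality of $\int_{\gamma_i}$ and $\int_{\gamma_j}$ on $T$ alone does not control the holonomies at the base point unless the base point itself lies in $T$. But this is not where the content of the lemma lies; the content is maximality, and there your argument has a genuine gap, which you yourself flag. In your open--closed scheme nothing bounds the circumferences of the hypothetical extra cylinders $D_n$ (the constants $c_n$ relating $\int_{\gamma_{D_n}}$ to $\int_{\gamma_1}$ may a priori blow up), and nothing rules out their heights collapsing; the appeal to the no-rel hypothesis as ``tying the heights of cylinders in an equivalence class together'' is asserted rather than proved, and it is not in fact how this degeneration is controlled.

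The missing ingredient is the fact the paper's proof turns on: by Mirzakhani--Wright~\cite[Corollary 1.5]{MirWri}, the ratio of moduli of two $\M$-equivalent cylinders is locally constant on the locus where both persist. This holds for every affine invariant submanifold and does not use the no-rel assumption at all (the lemma's proof in the paper makes no use of rel). With it, the ``shortcut'' you mention in your last sentence closes immediately and is exactly the paper's argument: if at some $(Y,\eta)\in U$ there were an extra cylinder $D$ equivalent to the cylinders of $\cC$, transport it along a path in $U$ back to $(X,\omega)$. Along the path its circumference is a fixed real multiple of that of a cylinder in $\cC$ (your proportionality of functionals) and its modulus a fixed positive multiple of that cylinder's modulus, while the cylinders of $\cC$ persist on all of $U$ by hypothesis; hence the height of $D$ is bounded below and $D$ cannot fail to persist. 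It therefore survives to $(X,\omega)$ still equivalent to $\cC$, contradicting maximality of $\cC$ there. So the route you sketched at the end is the right one, but it must be completed by citing or proving the moduli-ratio constancy, not by invoking the no-rel hypothesis.
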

\begin{proof}
Suppose to a contradiction that there is a point $(Y, \eta)$ in $U$, in which $\cC$ fails to be an equivalence class, i.e. there is a superset $\cC'$ of $\cC$ that is an equivalence class. By Mirzakhani-Wright~\cite[Corollary 1.5]{MirWri}, the ratio of moduli of two $\M$-equivalent cylinders is locally constant in $\M$. Let $\gamma$ be a path in $U$ from $(Y, \eta)$ to $(X, \omega)$. Since the ratio of moduli of cylinders in $\cC'$ is constant, if one cylinder fails to persist then all of them must. Some cylinders must fail to persist along this path since $\cC$ is an equivalence class at $(X, \omega)$. However, by assumption the cylinders in $\cC$ persist along the path and so we have a contradiction. 
\end{proof}

Suppose that $(X, \omega)$ is a translation surface in $\M$ that is horizontally periodic and that has cylinder preserving space and twist space coinciding. By Avila-Eskin-M\"oller~\cite{AEM}, the projection of the tangent space of $\M$ to absolute cohomology is complex symplectic. Since $\M$ has no rel, for every equivalence class $\cD$ of horizontal cylinders on $(X, \omega)$ there is a tangent vector $v_{\cD}$ which is nonzero on the core curve of every cylinder in $\cD$ and which vanishes on the core curve of every other horizontal cylinder in $(X, \omega)$. There is also the standard shear $\sigma_{\cD}$ which is supported on $\cD$. Notice that 
\[ B:=\{ \sigma_{\cD}, v_{\cD} : \text{ $\cD$ a horizontal equivalence class on $(X, \omega)$ } \} \]
is a basis of the tangent space of $\M$ at $(X, \omega)$. Let $V$ be an open connected neighborhood of $(X, \omega)$ on which all horizontal cylinders and all vectors in $B$ remain well-defined tangent directions.

\begin{lemma}\label{L:similar1} 
Suppose that $(X, \omega)$ is a horizontally periodic translation surface in $\M$ with twist space and cylinder preserving space coinciding. Suppose that $\cC$ is an $\M$-equivalence class of horizontal cylinders on $(X,\omega)$ that contains exactly two cylinders. Let $V$ be as above. If the cylinders in $\cC$ are simple for some $(Y, \eta) \in V$, then the cylinders in $\cC$ are similar on $(Y, \eta)$. 
\end{lemma}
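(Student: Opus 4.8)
The plan is to show that $C_1$ and $C_2$ have equal moduli on $(Y,\eta)$; this is equivalent to similarity here. Indeed, since $C_1$ and $C_2$ lie in a common $\M$-equivalence class, their core curves are parallel, and a simple cylinder is determined up to the $\mathrm{diag}(\lambda,\lambda)$-action by its circumference and height (each boundary is a single saddle connection, so there is no extra marked-point data to match). Thus, writing $c_i$ and $h_i$ for the circumference and height of $C_i$ on $(Y,\eta)$, the equation $C_1 = \mathrm{diag}(\lambda,\lambda)C_2$ has a solution $\lambda>0$ precisely when $h_1/c_1 = h_2/c_2$. Moreover, by Mirzakhani--Wright~\cite[Corollary 1.5]{MirWri} the ratio $k := \mathrm{Mod}(C_1)/\mathrm{Mod}(C_2)$ is locally constant on $\M$, hence constant on the connected set $V$; so it suffices to prove $k = 1$, and one is free to test this at $(X,\omega)$ or at $(Y,\eta)$.

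\textbf{Set-up for a contradiction.} Suppose $k \neq 1$. Working at $(Y,\eta)$, keep $C_1,C_2$ horizontal and simple, and use the two tangent directions afforded by the hypotheses at the nearby surface $(X,\omega)$ transported to $(Y,\eta)$: the standard shear $\sigma_\cC = h_1\gamma_1^* + h_2\gamma_2^*$, and — because $\M$ has no rel — the vector $v_\cC$ constructed before the lemma, which is nonzero on $\gamma_1$ and $\gamma_2$ and vanishes on the core curves of every other horizontal cylinder. Differentiating the identity $\mathrm{Mod}(C_1) = k\,\mathrm{Mod}(C_2)$ along the vectors of the basis $B$, and using that $\sigma_\cC$ and $i\sigma_\cC$ vanish on $\gamma_1,\gamma_2$ (disjoint parallel core curves have zero intersection), one extracts precise information about how $v_\cC$ moves the circumferences and heights of $C_1,C_2$; the expected outcome is that, after correcting $v_\cC$ by multiples of $\sigma_\cC$ and $i\sigma_\cC$, one obtains a direction in $T_{(Y,\eta)}\M$ that alters the ratio $c_1/c_2$ — equivalently, rescales the pair $(c_1,c_2)$ — while leaving the rest of the surface (in particular every other horizontal cylinder) fixed and keeping $C_1,C_2$ simple.

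\textbf{Degeneration and the core obstacle.} Integrating this direction, together with the stretch $i\sigma_\cC$, gives a path in $\M$ through $(Y,\eta)$ along which $C_1,C_2$ persist as simple horizontal cylinders whose moduli vary with their common scaling parameter while the complement is unchanged; by Lemma~\ref{L:EC} the set $\cC$ remains an $\M$-equivalence class along this path. The plan is then to push the parameter to the boundary of its range, collapse the class $\cC$, and read off the limiting surface in the Mirzakhani--Wright partial compactification: since each $C_i$ is simple, collapsing identifies the horizontal boundary saddle connections of each cylinder in pairs, and one argues that the resulting structure is incompatible — unless $k=1$ — with $\{C_1,C_2\}$ having been the \emph{entire} equivalence class on $(Y,\eta)$, for instance because it would force an additional $\M$-parallel cylinder or a rel deformation on the boundary surface, contradicting that $\M$ (and hence the boundary stratum) has no rel. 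I expect this last step to be the main obstacle: the routine part is the reduction and the cylinder-deformation bookkeeping, which the machinery of Wright~\cite{Wcyl} and Mirzakhani--Wright~\cite{MirWri} is built to handle, whereas pinning down exactly which boundary surface one lands in when a pair of similar-versus-dissimilar simple cylinders is collapsed, and turning its geometry into a contradiction, is where the genuine work lies.
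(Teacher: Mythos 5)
There are two genuine gaps, and the first is a mathematical error rather than a mere omission. Your opening reduction is false: similarity of two simple cylinders is \emph{not} equivalent to equality of moduli. A simple cylinder with its boundaries marked at the cone points carries three parameters --- circumference, height, and the twist (the horizontal offset between the top and bottom cone points) --- and the rescaling $\mathrm{diag}(\lambda,\lambda)$ scales all three. Two equivalent simple cylinders with equal moduli but non-proportional twists are therefore not similar, so your claim that ``there is no extra marked-point data to match'' discards exactly the part of the conclusion that the paper needs downstream: both Lemma~\ref{L:MWE1} and the proof of Theorem~\ref{T:MWE} invoke ``similar, hence both contain a vertical saddle connection,'' which does not follow from equal moduli alone. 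The paper's proof treats modulus and twist together: if the two cylinders are not similar, the standard shear can be used to arrange that exactly one of them, say $C_1$, contains a vertical saddle connection while $C_2$ does not, and this asymmetry is the engine of the whole argument.

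Second, even for the weakened modulus statement your argument is not actually carried out: the ``expected outcome'' of differentiating $\mathrm{Mod}(C_1)=k\,\mathrm{Mod}(C_2)$ along the basis $B$ is asserted rather than derived (note $v_{\cC}$ is only constrained on core curves of horizontal cylinders, so there is no reason it ``leaves the rest of the surface fixed''), and you explicitly leave open the final degeneration step, which is where the contradiction must come from. Moreover, collapsing the whole class $\cC$ symmetrically throws away the needed leverage. In the paper one degenerates so that the vertical saddle connection in $C_1$ vanishes while no saddle connection of $C_2$ does; then the image $\gamma$ of the core curve of $C_2$ survives as an absolute homology class on the boundary surface $(Z,\zeta)$. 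By Mirzakhani--Wright the boundary orbit closure has rank exactly $r-1$, so the images of $\{\sigma_{\cD_i}, v_{\cD_i}\}_{i\ge 2}$ already span its absolute projection and some combination $v_{\cC}-v$ must be rel; but $v$ vanishes on $\gamma$ while $v_{\cC}$ does not, so $v_{\cC}-v$ changes an absolute period --- a contradiction. If you collapse both simple cylinders at once, no saddle connection of the second cylinder need vanish (none does unless its twist is trivial), and nothing in your outline identifies the boundary surface or extracts an incompatibility from it; as written, the core of the proof is missing.
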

\begin{proof}
Let $\cC = \{C_1, C_2\}$ and suppose to a contradiction that they are not similar on $(Y, \eta)$. Since they are not similar, on $(Y, \eta)$ use the standard shear to shear $\cC$ (which is still an equivalence class on $(Y, \eta)$ by Lemma~\ref{L:EC}) so that one cylinder, say $C_1$, has a vertical saddle connection $v$, but $C_2$ does not. Let $(Y, \eta)$ now denote this new surface. 

Apply the standard deformation to $\cC$ to send the period of $v$ to zero while only altering imaginary parts of periods. Let $(Z, \zeta)$ be the resulting boundary translation surface. Since only the height of one simple cylinder vanished, $(Z, \zeta)$ is connected. Let $\cN$ be the largest affine invariant submanifold in the boundary of $\cM$ containing $(Z, \zeta)$ (the boundary is defined in Mirzakhani-Wright~\cite{MirWri}). 

Let $\{ \cD_i \}_{i=1}^r$ be the horizontal $\M$-equivalence classes on $(X, \omega)$. Suppose too that $\cD_1 = \cC$. In the basis $\{ \sigma_{\cD_i}, v_{\cD_i} \}_{i=1}^r$ of the tangent space of $\M$ at $(Y, \eta)$ the only deformations that fail to fix $\cC$ are $\sigma_{\cC}$ and $v_{\cC}$. By Mirzakhani-Wright~\cite[Theorem 2.7]{MirWri}, the span of $\{ \sigma_{\cD_i}, v_{\cD_i} \}_{i=2}^r \cup \{ v_\cC \}$ injects into the tangent space of $\cN$ at $(Z, \zeta)$. Since all cylinders in $\bigcup_{i \ne 1} \cD_i$ persist (and are disjoint) on $(Z, \zeta)$, we observe that the image of $v_{\cD_i}$ on $(Z, \zeta)$ (for $i \ne 1$) causes cylinders in $\cD_i$ to have the period of their core curves change while fixing the periods of the core curves of all other cylinders in $\bigcup_{i \ne 1} \cD_i$. 

Recall that the rank of $\cN$ is the maximum dimension of the $\Q$ linear span of the periods of the core curves of a collection of disjoint cylinders on translation surfaces in $\cN$. Therefore, we see that the rank of $\cN$ is at least $r-1$. By Mirzakhani-Wright~\cite[Corollary 2.8]{MirWri} the rank of $\cN$ is strictly smaller than the rank of $\cM$ so the rank of $\cN$ is exactly $r - 1$. Therefore, $\{ \sigma_{\cD_i}, v_{\cD_i} \}_{i=2}^r$ forms a basis of the projection of $T_{(Z, \zeta)} \cN$ to absolute cohomology, taken as a complex vector space. 

There is a vector $v$ in the span of $\{ \sigma_{\cD_i}, v_{\cD_i} \}_{i=2}^r$ so that $v_{\cC} - v$ is rel. Let $\gamma$ be the image of the core curve of $C_2$ on $(Z, \zeta)$. Since $C_2$ did not contain a vertical saddle connection, $\gamma$ remains an absolute homology class on $(Z, \zeta)$. Moreover, $v$ preserves $\gamma$ while $v_{\cC}$ changes its period. Therefore, $v_{\cC} - v$ changes the period of an absolute homology class and hence cannot be rel, which is a contradiction. 
\end{proof}

\begin{lemma}[Similar Cylinder Lemma]\label{L:SCL}
Suppose that $\M$ is an affine invariant submanifold that has no rel. Let $(Y, \eta)$ be a translation surface with two simple cylinders $C_1$ and $C_2$ that form an equivalence class $\cC$. It follows that $C_1$ and $C_2$ are similar. 
\end{lemma}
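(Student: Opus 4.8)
The plan is to reduce the assertion to Lemma~\ref{L:similar1}. That lemma already gives the conclusion when the surface in question lies in the distinguished neighborhood $V$ of a horizontally periodic surface whose twist space and cylinder-preserving space coincide, so the only real work is to connect an arbitrary surface $(Y,\eta)$ carrying the pair $\cC=\{C_1,C_2\}$ to a surface of that special type, all the while keeping $\cC$ a two-element equivalence class of simple cylinders. Two persistence facts make such deformations available: by Lemma~\ref{L:EC}, $\cC$ remains an $\M$-equivalence class for as long as $C_1$ and $C_2$ are not collapsed, and the property of a cylinder being simple is open, so $C_1$ and $C_2$ stay simple under small perturbations.

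So the first step is to apply an element of $\GL(2,\R)$ making $C_1$ and $C_2$ horizontal, and then to deform $(Y,\eta)$ within $\M$ -- keeping $C_1,C_2$ horizontal and simple -- to a horizontally periodic surface; this is possible because (as $\M$ has no rel) horizontally periodic surfaces are dense among surfaces of $\M$ having $C_1,C_2$ as a horizontal equivalence class, one only needing to move the complement of $\cC$ into completely periodic position while the cylinders of $\cC$ survive any sufficiently small such move. The second step is to stay inside the horizontally periodic locus and, applying standard shears of the horizontal equivalence classes (which never destroy a cylinder) together with Wright's analysis in~\cite{Wcyl}, to deform further to a surface $(X,\omega)$ whose twist space and cylinder-preserving space coincide. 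At every stage $C_1$ and $C_2$ persist, so on $(X,\omega)$ the set $\cC$ is an equivalence class of two simple horizontal cylinders, and $(X,\omega)$ lies in its own neighborhood $V$. Lemma~\ref{L:similar1} then shows that $C_1$ and $C_2$ are similar on $(X,\omega)$.

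It remains to transport similarity back along the deformation path to $(Y,\eta)$. The ratio of the moduli of $C_1$ and $C_2$ is locally constant on the locus where $\cC$ persists as an $\M$-equivalence class, by Mirzakhani--Wright~\cite[Corollary 1.5]{MirWri}, so it equals $1$ everywhere on the path, in particular at $(Y,\eta)$. For the remaining datum -- the relative twist of the two cylinders -- I would either choose the deformations in the two steps above to be supported on the complement of $\cC$ (so that they leave $\overline{C_1}$ and $\overline{C_2}$ pointwise fixed and hence trivially preserve similarity), or else observe that the $\GL(2,\R)$ element and the standard shears of $\cC$ actually used act on each cylinder of $\cC$ by the same affine map and hence preserve similarity outright; either way $C_1$ and $C_2$ are rescalings of one another on $(Y,\eta)$.

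The main obstacle is the reduction step: running the production of a horizontally periodic surface with coinciding twist and cylinder-preserving spaces so that it passes ``through'' the prescribed pair $\cC$ -- i.e.\ verifying that no move ever has to collapse $C_1$ or $C_2$ or enlarge the equivalence class -- and, relatedly, controlling the full intrinsic geometry of $C_1$ and $C_2$ (their heights and relative twist, not merely the moduli ratio) along the path, so that the conclusion of Lemma~\ref{L:similar1} genuinely survives the journey back. Everything else -- the use of the $\GL(2,\R)$ action, the two persistence facts, and the final invocation of Lemma~\ref{L:similar1} -- is routine once the path has been arranged.
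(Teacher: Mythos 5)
Your overall route is the paper's: reduce to Lemma~\ref{L:similar1} by deforming $(Y,\eta)$, with $\cC$ kept as an equivalence class via Lemma~\ref{L:EC}, to a horizontally periodic surface whose twist space and cylinder preserving space coincide, and then pull the similarity back. The gap is in the step you yourself flag as the main obstacle, and the justification you offer for it does not work. The density assertion --- that horizontally periodic surfaces are dense in the locus of surfaces of $\M$ on which $C_1, C_2$ form a horizontal equivalence class --- is not something you can extract from ``$\M$ has no rel''; rel plays no role in this step, and even granting density of horizontally periodic surfaces in $\M$, a horizontally periodic surface close to $(Y,\eta)$ need not have $C_1$ and $C_2$ horizontal, so a small move does not obviously stay in your locus. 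Likewise your first option for controlling the twist datum --- deformations supported on the complement of $\cC$ --- cannot produce horizontal periodicity in general, since making the complement of $\cC$ periodic starting from an arbitrary surface is exactly the hard part. What the paper uses instead is the mechanism from the proof of the cylinder deformation theorem~\cite[Theorem 1.1]{Wcyl}: horizontal horocycle flow together with Smillie--Weiss~\cite{SW} and arbitrarily small perturbations. Horocycle flow is the right tool precisely because it preserves the horizontal cylinder structure exactly --- the cylinders of $\cC$ stay horizontal and simple, with the same heights, circumferences, and boundary saddle connections --- so the limiting (slightly perturbed) surface satisfies the hypotheses of Lemma~\ref{L:similar1} with $\cC$ still an equivalence class by Lemma~\ref{L:EC}.

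For the transfer back, your second option is the correct one, and it is in effect how the paper's assertion that the cylinders persist ``congruent'' should be read: horocycle flow and the standard shear of $\cC$ change the twist of each cylinder of $\cC$ proportionally to its height, while shears of the other equivalence classes do not touch $\cC$ at all, so the two cylinders are rescalings of one another at the end of the path if and only if they are at the start; with this observation the moduli-ratio argument via~\cite[Corollary 1.5]{MirWri} is not even needed. So the architecture of your proof matches the paper's, but as written the production of the distinguished surface rests on an unsupported (and incorrectly attributed) density claim rather than on the horocycle-flow/Smillie--Weiss argument that actually carries the proof.
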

\begin{proof}
As explained in the proof of the cylinder deformation theorem Wright~\cite[Theorem 1.1]{Wcyl}, by appealing to Smillie-Weiss~\cite{SW}, it is possible to use horocycle flow in the horizontal direction and arbitrarily small perturbations to start with the surface $(Y, \eta)$ with two simple horizontal cylinders in $\cC$ and end with a surface $(X, \omega)$ in $\M$ that is horizontally periodic with the twist space coinciding with the cylinder preserving space and so that $C_1$ and $C_2$ persist on $(X, \omega)$ as simple cylinders congruent to the corresponding cylinders on $(Y, \eta)$. Since the process only involves hororcycle flow and arbitrarily small perturbations, we have by Lemma~\ref{L:EC} that $\cC$ remains an equivalence class on $(X, \omega)$. By Lemma~\ref{L:similar1}, $C_1$ and $C_2$ are similar.
\end{proof}

%
%

\section{Proof of Theorem~\ref{T:MWE} - Classification of Higher Rank Nonarithmetic Affine Invariant Submanifolds in $\mathcal{H}(6)$ }~\label{S:MWE}

Thoughout this section we will make the following assumption:

\begin{ass}\label{A:nonarithmetic}
$\M$ is a rank two nonarithmetic affine invariant submanifold in $\mathcal{H}(6)$
\end{ass}

\noindent Our goal is to show that there is only one such affine invariant submanifold.

\begin{lemma}\label{L:proto} 
There is a vertically and horizontally periodic translation surface in $\M$ with two vertical and horizontal equivalence classes $\{\cC_1, \cC_2 \}$ and $\{ \cV_1, \cV_2 \}$ respectively so that the following hold: 
\begin{enumerate}
\item $\cV_2$ is contained in $\cC_2$ 
\item $\cC_1$ is contained in $\cV_1$
\item $\cC_1$ contains a vertical saddle connection
\item $\cV_2$ contains a horizontal saddle connection
\end{enumerate}
\end{lemma}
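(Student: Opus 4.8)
The plan is to build the surface in three stages: first make it horizontally periodic with two horizontal equivalence classes using Wright's work, then rotate slightly so that it is also vertically periodic without destroying the horizontal structure, and finally read off the nesting of the classes from the rank two constraint. Throughout I use that $\mathcal H(6)$ has no rel, hence neither does $\M$, so $\dim_{\C}\M = 4$ and, for any horizontally periodic surface in $\M$, every horizontal equivalence class $\cD$ admits both its standard shear $\sigma_{\cD}$ and a deformation $v_{\cD}$ changing only the periods of the core curves of $\cD$.

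First I would invoke Wright~\cite[Theorem 1.10]{Wcyl} to get a horizontally periodic $(X,\omega)\in\M$ whose horizontal cylinders form exactly $r=2$ equivalence classes $\cC_1,\cC_2$, with twist space equal to cylinder preserving space and with $\sigma_{\cC_1},\sigma_{\cC_2}$ projecting to a Lagrangian in the (four dimensional, since there is no rel) projection of $T_{(X,\omega)}\M$ to absolute cohomology. Applying $t\sigma_{\cC_1}$ for suitable $t$, I may assume some cylinder of $\cC_1$ contains a vertical saddle connection: because $\mathcal H(6)$ has a single zero, that zero lies on both boundary components of every cylinder, so this is a nonempty condition on $t$, and the deformation stays in the horizontal leaf and preserves the equivalence classes. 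This gives (3).

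Next, by Smillie--Weiss~\cite{SW} the completely periodic directions are dense; picking one close enough to the vertical and applying the horocycle element $\begin{pmatrix} 1 & s \\ 0 & 1 \end{pmatrix}\in\GL(2,\R)$ that carries it to the vertical produces a surface that is simultaneously horizontally and vertically periodic. Since this element fixes the horizontal foliation it preserves horizontal periodicity, the classes $\cC_1,\cC_2$, and (for $s$ small, after a further small shear of $\cC_1$ if needed) the vertical saddle connection of (3). The vertical cylinders now split into $\M$-equivalence classes whose standard shears again span an isotropic subspace of the rank two projection, so there are at most two; ruling out a single vertical class and showing there are exactly two, $\cV_1,\cV_2$, is part of the cylinder-diagram analysis discussed below. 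Shearing $\cV_2$ by $\sigma_{\cV_2}$ arranges, as before, that $\cV_2$ contains a horizontal saddle connection, giving (4), and one checks this does not disturb (3).

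The hard part will be the nesting (1) and (2): showing the two horizontal and two vertical classes nest rather than interleave. The subsurface $\bigcup\cC_1$ has horizontal geodesic boundary, so every vertical cylinder is either contained in it or disjoint from its interior; the task is to show that the vertical cylinders meeting $\bigcup\cC_1$ are all $\M$-equivalent (call their class $\cV_1$, so $\cC_1\subseteq\cV_1$), and dually that the horizontal cylinders meeting $\bigcup\cV_2$ form one class $\cC_2\supseteq\cV_2$. I expect this to require combining the isotropy constraints of Wright's cylinder deformation theorem, the rank drop under the degenerations of Mirzakhani--Wright~\cite{MirWri}, and the Similar Cylinder Lemma~\ref{L:SCL} (applied whenever a class consists of two simple cylinders), rather than a soft argument --- in effect one enumerates the possible horizontally periodic cylinder diagrams in $\mathcal H(6)$ with two equivalence classes and eliminates the non-nesting ones. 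This configuration analysis, not the periodicity or the saddle-connection conditions, is the real obstacle.
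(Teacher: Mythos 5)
Your proposal leaves the real content of the lemma unproven. The paper's proof is short precisely because it imports the whole nesting structure --- a horizontally \emph{and} vertically periodic surface with two equivalence classes in each direction satisfying (1) and (2) --- from the prototype lemma of Apisa~\cite{Apisa-hyp}; the only thing actually argued in the paper is that the shears needed for (3) and (4) do not destroy this structure. You instead defer exactly this nesting ("I expect this to require\dots one enumerates the possible cylinder diagrams\dots") and never carry out the enumeration, so conditions (1), (2), and even the existence of exactly two vertical equivalence classes are not established. Moreover, the one structural claim you do make toward the nesting is false: a subsurface with horizontal geodesic boundary does not force a vertical cylinder to be contained in it or disjoint from its interior, since vertical trajectories cross horizontal boundary curves transversally (indeed on the eventual prototype the cylinders of $\cV_1$ meet both $\cC_1$ and $\cC_2$).

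The step you do attempt also has a gap in the same place where the paper does genuine work. After using a horocycle element to make a nearby completely periodic direction vertical, your vertical saddle connection in $\cC_1$ is no longer vertical, and the "further small shear of $\cC_1$" you invoke to restore it is a cylinder deformation that changes every vertical cylinder crossing $\cC_1$, so it can destroy vertical periodicity; the same issue arises when you shear $\cV_2$ for (4) and wave it away with "one checks this does not disturb (3)." The paper's argument for exactly this point is not soft: it vertically collapses $\cC_1$, observes that each boundary component still contains a vertical cylinder because $\cV_2\subseteq\cC_2$ survives, and then uses Mirzakhani--Wright~\cite[Corollary 2.8]{MirWri} (rank drops to one on the boundary) together with Wright~\cite[Theorem 1.5]{Wcyl} (rank one loci are completely periodic) to conclude the sheared surface is still vertically periodic. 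Note that this argument needs the nesting (1)--(2) as input, so your plan of securing (3) and (4) first and the nesting last cannot simply be reordered; without either the citation to the prototype lemma or a completed cylinder-diagram analysis, the proposal does not prove the statement.
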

\begin{proof}
By the prototype lemma in Apisa~\cite{Apisa-hyp}, there is a translation surface $(X_0, \omega_0)$ that satisfies the first two hypotheses. Use the standard shear to shear $\cC_1$ so that it contains a vertical saddle connection and call the resulting translation surface $(X_1, \omega_1)$. 

Vertically collapse $\cC_1$ (i.e. apply the standard shear to send the period of the vertical saddle connection in $\cC_1$ to zero while only changing imaginary parts of periods). Each component of the boundary translation surface contains $\cC_2$, which was not affected by the degeneration. In particular, since $\cC_2$ contains $\cV_2$, each component of the boundary translation surface contains a vertical cylinder. Since each component of the boundary translation surface is contained in a rank one orbit closure by Mirzakhani-Wright~\cite[Corollary 2.8]{MirWri} it is completely vertically periodic by Wright~\cite[Theorem 1.5]{Wcyl}. Since $\cC_1$ was vertically collapsed, it follows that $(X_1, \omega_1)$ remains vertically and horizontally periodic and satisfies properties $1$ through $3$. 

Now use the standard shear to shear $\cV_2$ so that it contains a horizontal saddle connection and call the resulting surface $(X_2, \omega_2)$. The argument just given shows that $(X_2, \omega_2)$ is the desired surface.
\end{proof}

\begin{rem}
By Wright~\cite[Theorem 1.9]{Wcyl}, since $\M$ is nonarithmetic each equivalence class must contain at least two cylinders. Since a translation surface in $\mathcal{H}(6)$ has at most four cylinders in a given direction, each equivalence class contains exactly two cylinders. 
\end{rem}

\begin{ass}\label{A:proto}
Let $(X, \omega)$ be a surface in $\M$ satisfying the conclusion of Lemma~\ref{L:proto}. 
\end{ass}

\begin{defn}
``Collapsing $\cC_1$" will mean applying the standard shear to $\cC_1$ to only alter the imaginary parts of periods while sending the period of the vertical saddle connection contained in $\cC_1$ to zero. We will use the phrase ``collapsing $\cV_2$" analogously. 
\end{defn}

\begin{lemma}\label{L:collapse}
Collapsing $\cV_2$ produces a boundary translation surface where each component belongs to a rank one rel one orbit closure in the boundary of $\M$. The rel deformation is a linear combination of the images on the boundary of the standard shears of the horizontal equivalence classes on $(X, \omega)$. By symmetry of hypotheses, the same holds when $\cC_1$ is collapsed.
\end{lemma}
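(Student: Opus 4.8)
The plan is to transplant the cohomological bookkeeping from the proof of Lemma~\ref{L:similar1} to the equivalence class $\cV_2$. First record the numerology: since $\mathcal{H}(6)$ has a single zero, no affine invariant submanifold it contains has any rel, so $\dim_\C \M = 2\rank \M = 4$. By Wright's cylinder deformation theorem \cite[Theorem 1.1]{Wcyl} the standard shear $\sigma_{\cV_2}$ lies in $T_{(X,\omega)}\M$, and ``collapsing $\cV_2$'' is the real one-parameter flow along $\sigma_{\cV_2}$ driving the period of the horizontal saddle connection of $\cV_2$ --- and hence the heights of both cylinders of $\cV_2$ --- to zero; by Mirzakhani-Wright \cite{MirWri} the limit is a translation surface $(Z,\zeta)$ in $\partial\M$ lying in a boundary affine invariant submanifold $\cN$ (a priori a product of affine invariant submanifolds, one per component of $(Z,\zeta)$).

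For the rank statement, Mirzakhani-Wright \cite[Corollary 2.8]{MirWri} gives that the rank of each factor of $\cN$ is strictly less than $\rank \M = 2$, while every nonempty affine invariant submanifold has rank at least one (the $\GL(2,\R)$-orbit already contributes one complex dimension to the projection of the tangent space to absolute cohomology, which is symplectic by Avila-Eskin-M\"oller \cite{AEM}); hence every component of $(Z,\zeta)$ has orbit closure of rank exactly one. (In particular $(Z,\zeta)$ is connected, since rank is additive over connected components and two components would force rank at least two.) For the rel statement, I would argue as in Lemma~\ref{L:similar1}: by Mirzakhani-Wright \cite[Theorem 2.7]{MirWri} a codimension-one subspace of $T_{(X,\omega)}\M$, spanned by the deformations that fix $\cV_2$ together with the collapse direction, injects into $T_{(Z,\zeta)}\cN$, so $\dim_\C \cN \geq \dim_\C \M - 1 = 3$; the rank having dropped by exactly one, $\cN$ then has at least one dimension of rel, while the combinatorial analysis below places $(Z,\zeta)$ in a stratum with exactly two zeros, forcing at most one dimension of rel and hence exactly one. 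Finally, the standard shears $\sigma_{\cC_1}$ and $\sigma_{\cC_2}$ of the two horizontal equivalence classes survive the degeneration: a shear never collapses a cylinder, and $\sigma_{\cC_1}$ is supported on $\cC_1 \subseteq \cV_1$, disjoint from the collapsed region, so the images $\bar\sigma_{\cC_1}, \bar\sigma_{\cC_2}$ lie in $T_{(Z,\zeta)}\cN$. Once one checks that $\bar\sigma_{\cC_1}$ and $\bar\sigma_{\cC_2}$ are linearly independent there, the plane they span meets the kernel of the absolute-cohomology projection (which has complex dimension one on $\cN$) in a line, and that line, lying inside the one-dimensional rel of $\cN$, must be the rel direction. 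This is the displayed conclusion, and the ``by symmetry of hypotheses'' clause is immediate since collapsing $\cC_1$ and collapsing $\cV_2$ are interchanged by swapping the horizontal and vertical roles in Lemma~\ref{L:proto} ($\cC_1 \leftrightarrow \cV_2$ and $\cV_1 \leftrightarrow \cC_2$).

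The crux is the geometric bookkeeping packaged above as ``survives the degeneration'' and ``exactly two zeros'': using the combinatorics of Lemma~\ref{L:proto} and the Remark (two cylinders per equivalence class, $\cV_2 \subseteq \cC_2$, $\cC_1 \subseteq \cV_1$, ambient stratum $\mathcal{H}(6)$ with one zero) one must verify that collapsing both cylinders of $\cV_2$ keeps the surface connected and lands it in a two-zero stratum, that $\sigma_{\cC_2}$ pushes forward to a nonzero class even though $\cC_2$ contains the collapsed equivalence class, and that $\bar\sigma_{\cC_1}$ and $\bar\sigma_{\cC_2}$ remain independent. I expect this combinatorial case analysis, rather than the abstract cylinder-deformation machinery, to be the main obstacle.
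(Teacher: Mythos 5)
Your rank-one step is fine and is essentially the paper's (Mirzakhani--Wright \cite[Corollary 2.8]{MirWri} plus rank $\geq 1$), but the second assertion of the lemma --- that the rel deformation is a combination of the images of the horizontal standard shears --- is exactly where your argument breaks. The claim that ``the plane spanned by $\bar\sigma_{\cC_1},\bar\sigma_{\cC_2}$ meets the kernel of the absolute-cohomology projection in a line'' is a non sequitur: a two-dimensional subspace and a one-dimensional kernel inside a three-dimensional tangent space satisfy $\dim(U\cap W)\geq 2+1-3=0$, so nothing in your count forces a nonzero intersection. The missing input, which is the heart of the paper's proof, is that $\sigma_{\cC_1}$ and $\sigma_{\cC_2}$ are supported on disjoint cylinders, so their images pair to zero under the symplectic form on the projection of $T_{(Z,\zeta)}\cN$ to absolute cohomology; since that projection is only two-dimensional (rank one), the symplectic orthogonal of a nonzero vector is its own span, hence the two absolute images are proportional and $\bar\sigma_{\cC_1}-c\,\bar\sigma_{\cC_2}$ is a nonzero rel deformation. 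This single computation yields rel $\geq 1$, pins down $\cN$ as rank one rel one (using $\dim_\C\cN<\dim_\C\M=4$), and exhibits the rel vector in the required form. You cite Avila--Eskin--M\"oller for symplecticity but never use the vanishing of the pairing, and you also leave unverified that the images are nonzero in absolute cohomology; the paper gets this from a cylinder of $\cV_1$ that persists on each boundary component and crosses both horizontal equivalence classes, so each shear changes the period of its core curve.

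Two further points. First, the parenthetical ``$(Z,\zeta)$ is connected, since rank is additive over connected components'' is false: rank is not additive over components (their absolute periods can be, and here are, correlated), and collapsing $\cV_2$ genuinely can disconnect the surface --- Lemma~\ref{L:MWE1} is devoted to precisely that case, which is realized by $\M_0$ itself. This is why the lemma is stated per component, and why your later appeal to connectedness is not available. Second, your route to ``rel exactly one'' via ``a stratum with exactly two zeros'' does not work as stated: the boundary surface carries marked points (in the connected case it lies in $\mathcal{H}(1,1,0)$, whose ambient rel is two), so the stratum does not cap the rel at one; the correct cap is $\dim_\C\cN<\dim_\C\M$ from Mirzakhani--Wright combined with rank one, or simply the explicit rel vector above. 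Finally, the combinatorial verifications you defer (survival and nonvanishing of the shear images, their independence, connectivity and zero count) are exactly where the content lies, so as written the proposal does not yet amount to a proof, although its rank-one half agrees with the paper.
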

\begin{proof}
Let $(Z, \zeta)$ be any component of the surface obtained by collapsing $\cC_1$ and let $\cN$ be the largest orbit closure in the boundary of $\M$ containing $(Z, \zeta)$. Since $(Z, \zeta)$ contains a vertical cylinder that previously belonged to $\cV_1$ - which intersected $\cC_1$ and $\cC_2$ on $(X, \omega)$ - it follows that $\cN$ contains tangent vectors, say $\sigma_1$ and $\sigma_2$, coming from the standard shears on $\cC_1$ and $\cC_2$. 

By Mirzakhani-Wright~\cite[Corollary 2.8]{MirWri}, $\cN$ is rank one, meaning that the projection of $T_{(Z, \zeta)} \cN$ to absolute cohomology is a two complex-dimensional symplectic vector space. Since $\sigma_1$ and $\sigma_2$ are supported on disjoint cylinders, they have intersection number zero. Therefore, there is a constant $c$ so that $c\sigma_1$ and $\sigma_2$ project to the same absolute cohomology class. In particular, this means that $\sigma_1 - c\sigma_2$ is a nonzero relative deformation. Consequently, $\cN$ has rel at least one, and since $\cN$ is three complex-dimensional $\cN$ must be a rank one rel one orbit closure as desired.
\end{proof}

\begin{lemma}\label{L:sc2}
There is a cylinder in $\cC_2$ that has a horizontal saddle connection attaching its top boundary to its bottom boundary. This saddle connection is necessarily contained in $\cV_2$. By symmetry, the analogous statement holds for $\cV_1$.
\end{lemma}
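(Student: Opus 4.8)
The plan is to reduce to the boundary of $\M$, where by Lemma~\ref{L:collapse} the relevant orbit closures are rank one rel one --- hence rigid --- and where the combinatorial configuration of $\cC_2$ and of the horizontal saddle connection inside $\cV_2$ is left intact.

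First I would collapse $\cC_1$. Since $\cC_1 \subseteq \cV_1$ is disjoint from both $\cC_2$ and $\cV_2$, this degeneration moves neither the cylinders of $\cC_2$ nor those of $\cV_2$, and in particular fixes the horizontal saddle connection $\sigma$ contained in $\cV_2$ furnished by Lemma~\ref{L:proto}(4); the cylinders of $\cC_2$ survive with their flat geometry unchanged, and by Lemma~\ref{L:EC} remain an equivalence class along the degenerating path. Thus it suffices to establish the statement on any component $(Z, \zeta)$ of the resulting boundary surface. By Lemma~\ref{L:collapse}, $(Z, \zeta)$ lies in a rank one rel one orbit closure $\cN$, and the single dimension of rel of $\cN$ is carried by the images on the boundary of the standard shears $\sigma_{\cC_1}$ and $\sigma_{\cC_2}$ of the horizontal equivalence classes of $(X,\omega)$. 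Because $\cN$ is rank one, every periodic direction on a surface in $\cN$ is completely periodic (Wright~\cite[Theorem 1.5]{Wcyl}), all horizontal cylinders of $(Z,\zeta)$ form a single $\cN$-equivalence class, and their core curves are mutually proportional in the absolute cohomology projection of the tangent space of $\cN$.

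Next I would read the desired configuration off the horizontal cylinder diagram of $(Z,\zeta)$. The saddle connection $\sigma$ lies in the interior of a vertical cylinder $V$ of $\cV_2$, running between the two boundary components of $V$, and $V$ lies inside the region $\cC_2$; hence $\sigma$ is a boundary saddle connection of the horizontal cylinder of $\cC_2$ directly above it and of the horizontal cylinder of $\cC_2$ directly below it, and the task is to show these two cylinders coincide and that $\sigma$ is their only boundary saddle connection --- equivalently, that $V$ meets exactly one horizontal cylinder. I would prove this using the rel-one constraint: since the single rel direction of $\cN$ is supported on the horizontal cylinders (being a combination of the images of $\sigma_{\cC_1}$ and $\sigma_{\cC_2}$) and all horizontal core curves are proportional in absolute cohomology, a count of saddle connections and cylinders in the boundary stratum shows that the rel deformation can be nonzero only if one of the two horizontal cylinders of $\cC_2$ is simple, with unique boundary saddle connection $\sigma$. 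As collapsing $\cC_1$ left $\cC_2$ and $\sigma$ untouched, the same conclusion holds on $(X,\omega)$. The statement for $\cV_1$ then follows by exchanging the horizontal and vertical directions and the pairs $(\cC_1,\cC_2)$ and $(\cV_2,\cV_1)$ in the conclusion of Lemma~\ref{L:proto}.

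The step I expect to be the main obstacle is precisely this cylinder-diagram analysis on the boundary: making rigorous why the rel-one condition, together with the $\cN$-equivalence and absolute-cohomology proportionality of all horizontal core curves, forces a simple cylinder among those of $\cC_2$ and excludes every non-simple horizontal diagram in the relevant boundary stratum --- and handling the possibility that the collapsed surface $(Z,\zeta)$ is disconnected, so that the argument must be run on each component and the conclusions reassembled.
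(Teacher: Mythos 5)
There is a genuine gap: the step you yourself flag as the main obstacle is in fact the entire content of the lemma, and the intermediate claim you propose to prove there is both unproved and too strong to be true. You assert that on the boundary ``the rel deformation can be nonzero only if one of the two horizontal cylinders of $\cC_2$ is simple, with unique boundary saddle connection $\sigma$,'' and earlier that the task is to show the cylinders above and below the particular $\sigma$ of Lemma~\ref{L:proto}(4) coincide. Neither statement is what the lemma asserts, and both fail on the model surface of Figure~\ref{F:MWE}, which satisfies all the standing assumptions: there, neither cylinder of $\cC_2$ is simple (each has several boundary saddle connections), and one of the two horizontal saddle connections contained in $\cV_2$ has \emph{different} $\cC_2$ cylinders on its two sides, so if Lemma~\ref{L:proto}(4) hands you that one, your reformulated goal is false. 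The lemma only claims that \emph{some} saddle connection joins the top and bottom of a single $\cC_2$ cylinder and that any such saddle connection lies in $\cV_2$. In addition, your use of Lemma~\ref{L:collapse} is misapplied: collapsing $\cC_1$ sends the heights of the $\cC_1$ cylinders to zero, so those cylinders do not survive, and the symmetric statement of Lemma~\ref{L:collapse} gives a rel deformation that is a combination of the images of the standard shears of the \emph{vertical} classes $\cV_1,\cV_2$, not of $\sigma_{\cC_1}$ and $\sigma_{\cC_2}$. So the horizontal-cylinder constraint you hope to extract from that degeneration is simply not available there.

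For comparison, the paper needs no degeneration at all for existence: if no cylinder of $\cC_2$ had a horizontal saddle connection joining its top to its bottom, then each vertical cylinder of $\cV_2$ (being contained in $\cC_2$) would alternate between the two $\cC_2$ cylinders and cross their core curves equally often, so the two circumferences in $\cV_2$ would have rational ratio, making $\M$ arithmetic by Wright~\cite[Theorem 1.9]{Wcyl} and contradicting Assumption~\ref{A:nonarithmetic}. For the containment statement the paper collapses $\cV_2$ (not $\cC_1$): this kills exactly the horizontal saddle connections inside $\cV_2$, and there the rel deformation genuinely is a combination of the images of $\sigma_{\cC_1}$ and $\sigma_{\cC_2}$ with nonzero $\cC_2$-component; hence a surviving top-to-bottom saddle connection of a $\cC_2$ cylinder would produce an absolute cycle crossing a single horizontal cylinder whose period the rel deformation changes, a contradiction. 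If you want to salvage your plan, you would need to replace the collapse of $\cC_1$ by the collapse of $\cV_2$, drop the ``simple with unique boundary saddle connection'' target, and supply a separate arithmetic argument (as above) for existence.
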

\begin{proof}
Suppose not to a contradiction. Let $C_2$ and $C_2'$ be the two cylinders in $\cC_2$. Since the cylinders in $\cV_2$ are contained in $\cC_2$, each cylinder in $\cV_2$ intersects the core curve of $C_2$ and $C_2'$ an equal number of times (since there are no saddle connections joining the top and bottom of a cylinder in $\cC_2$). This means that the ratio of the lengths of the core curves of the cylinders in $\cV_2$ is an integer or the reciprocal of an integer. In particular, by Wright~\cite[Theorem 1.9]{Wcyl} $\M$ is arithmetic, which contradicts Assumption~\ref{A:nonarithmetic} that $\M$ is nonarithmetic.

It remains to show that $\cV_2$ contains the saddle connection. Collapsing $\cV_2$ creates a boundary translation surface that has rel by supported on its horizontal cylinders by Lemma~\ref{L:collapse}. This is not possible if a horizontal cylinder in $\cC_2$ has a horizontal saddle connection joining its top and bottom. Therefore, $\cV_2$ must contain any such saddle connection. 
\end{proof}


\begin{defn}
If two cylinders are connected by a saddle connection, then we will say that they are adjacent. If they are connected by a saddle connection on their top and bottom boundaries, then we will say that they are two-sided adjacent.
\end{defn}

\begin{defn}
Define $\M_0$ to be the smallest affine invariant submanifold containing the translation surfaces in Figure~\ref{F:MWE} with $\lambda$ equal to the golden ratio, opposite sides identified, all angles integer multiples of $\frac{\pi}{2}$, and $x_i$ and $y_i$ arbitrary positive real numbers for $i = 1, 2$.
\end{defn}

\begin{lemma}\label{L:MWE1}
If collapsing $\cV_2$ disconnects the surface, then $\M = \M_0$. 
\end{lemma}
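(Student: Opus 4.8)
The hypothesis is that collapsing $\cV_2$ (in the sense of the definition preceding the statement) disconnects the boundary surface; call the resulting boundary translation surface $(Z,\zeta) = \bigsqcup_j (Z_j, \zeta_j)$. The first step is to pin down the topology. Since $(X,\omega)\in\mathcal{H}(6)$ has genus four and a single zero of order six, and since collapsing the two cylinders of $\cV_2$ along the horizontal saddle connection(s) they contain produces a bounded number of nodes, an arithmetic‑genus count forces the components to be of very low genus; in particular, no component can be a sphere (there is no holomorphic one‑form there), so the surface splits as a genus‑one piece together with a genus‑two piece (the case of two genus‑two pieces is handled identically, and more‑than‑two pieces is excluded by the same count). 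The order‑six zero of $\omega$, together with the cone points created by the collapsed saddle connections of $\cV_2$, is distributed among the pieces; and because $\cV_1$ and $\cC_1$ are disjoint from $\cV_2$, the cylinder $\cC_1$ (and its vertical saddle connection) survives on one of the pieces, as does $\cV_1$.

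Next I would invoke Lemma~\ref{L:collapse}: each component $(Z_j,\zeta_j)$ lies in a rank‑one rel‑one orbit closure $\cN_j$ in the boundary of $\M$, with the rel direction supported on horizontal cylinders. Combined with the genus bookkeeping this is very rigid. A genus‑two component has a rank‑one rel‑one orbit closure; by McMullen's classification \cite{Mc4,Mc5} such a locus (once we rule out the arithmetic case, which forces $\M$ itself arithmetic and contradicts Assumption~\ref{A:nonarithmetic}) is an eigenform locus $\Omega E_D \subset \mathcal{H}(1,1)$, which is already three complex‑dimensional and carries one dimension of rel; hence it admits no \emph{free} marked point, so any marked point coming from a foot of a cylinder of $\cV_2$ must be a periodic point. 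By Theorem~\ref{T2} (equivalently Corollary~\ref{C1}) the only such points are Weierstrass points, or the golden points on $\Omega E_5$. A genus‑one component is a torus; to have the correct dimension it carries exactly two marked points, at the feet of the two collapsed cylinders.

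The third step is the reconstruction. The surface $(X,\omega)$ is recovered from $(Z,\zeta)$ by "re‑inflating" $\cV_2$: one cuts the boundary pieces at the cone points produced by the collapse and glues back in the two cylinders of $\cV_2$. Because the feet of those cylinders are forced to sit at periodic points of the eigenform piece and at prescribed points of the torus piece — precisely so that the re‑inflated surface again has a \emph{single} zero of order six, i.e. lies in $\mathcal{H}(6)$ — the matching of cone angles leaves no freedom except the moduli $x_1,x_2,y_1,y_2$ of the reinserted cylinders, and it forces $D=5$ and identifies the eigenform piece with the golden eigenform locus (the decagon). This exhibits $(X,\omega)$, and by letting the four parameters vary inside $\M$ the entire family of Figure~\ref{F:MWE}, inside $\M$; hence $\M_0\subseteq\M$ by minimality of $\M_0$. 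Since $\mathcal{H}(6)$ has one zero it carries no rel, so a rank‑two affine invariant submanifold has complex dimension exactly four; thus $\M$ and $\M_0$ are both four‑dimensional irreducible varieties (Filip~\cite{Fi1}) and $\M=\M_0$.

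\textbf{Main obstacle.} The delicate part is the reconstruction bookkeeping: showing that the constraint of a single order‑six zero, together with the list of admissible marked points furnished by Theorem~\ref{T2}, leaves exactly the golden configuration of Figure~\ref{F:MWE}, with the gluing data reduced to the four listed parameters. Along the way one must rule out that the collapse yields more than two components or an arithmetic eigenform piece, confirm (via Lemma~\ref{L:collapse}) that the rel deformations of the boundary pieces are exactly those coming from horizontal cylinders so that no extra rel freedom enters the reconstruction, and verify the dimension/irreducibility statement needed to upgrade $\M_0\subseteq\M$ to equality.
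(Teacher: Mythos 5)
There is a genuine gap, and it starts with the topology on which your whole plan rests. In the disconnected case the combinatorics of $(X,\omega)$ can be pinned down (and the paper's proof does exactly this): each of the two components contains exactly one cylinder from each of $\cV_1$, $\cC_1$, $\cC_2$; the two cylinders of $\cC_2$ are two-sided adjacent on $(X,\omega)$; hence \emph{all three} horizontal saddle connections joining $\cC_2$ to itself lie in $\cV_2$ and vanish in the collapse. So three nodes form, not two, and the two boundary components are tori with marked points --- not a genus-one piece plus a genus-two piece, and not two genus-two pieces. Your arithmetic-genus count silently assumes two nodes, and the non sequitur ``no component is a sphere, hence genus one plus genus two'' excludes precisely the case that actually occurs. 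Consequently the engine of your argument --- a genus-two eigenform component of the $\cV_2$-degeneration, to which Theorem~\ref{T2} is applied to force $D=5$ --- does not exist: the components are arithmetic torus loci, and Theorem~\ref{T2} says nothing about them. In the paper the golden ratio enters only after the flat picture of $(X,\omega)$ has been reconstructed, and via the \emph{other} degeneration: collapsing $\cC_1$ on the reconstructed surface gives a connected surface in $\mathcal{H}(1,1,0)$ whose boundary orbit closure is rank one rel one with the rel moving a marked non-Weierstrass periodic point; only then does Theorem~\ref{T2} identify the golden eigenform locus and force $\lambda=\phi$.

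Second, the step you yourself flag as the ``main obstacle'' --- the reconstruction showing that re-inserting $\cV_2$ leaves exactly the configuration of Figure~\ref{F:MWE} with only $x_1,x_2,y_1,y_2$ free --- is the actual content of the lemma, and your proposal offers no argument for it. In the paper this is where most of the work happens: the disconnection hypothesis plus the count of seven horizontal saddle connections (three of them in $\cV_2$) forces the $\cC_1$ cylinders to be disjoint and simple; the Similar Cylinder Lemma (Lemma~\ref{L:SCL}) and the cylinder proportion theorem of \cite{NW} force the cylinders in $\cC_1$, and then in $\cV_1$, to be similar; and a further proportion argument eliminates one of the two possible gluings ($\sigma(b)=a$) in Figure~\ref{F:nearly-MWE}. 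None of this is implied merely by the boundary components lying in constrained loci, so as written the plan assumes the conclusion. Your closing step is fine and matches the paper: once the full family of Figure~\ref{F:MWE} with $\lambda=\phi$ is known to lie in $\M$, the four independent tangent directions from $x_1,x_2,y_1,y_2$ together with $\dim_{\C}\M=4$ (no rel in $\mathcal{H}(6)$) give $\M=\M_0$.
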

\begin{proof}
Let $(Y, \eta)$ be the disconnected boundary translation surface that arises from collapsing $\cV_2$. Each component of $(Y, \eta)$ contains at least one cylinder from $\cV_1$. Since $\cV_1$ contains exactly two cylinders and $(Y, \eta)$ has at least two components, it follows that $(Y, \eta)$ has exactly two components, each of which contains exactly one cylinder from $\cV_1$. This implies that the two cylinders in $\cV_1$ share no boundary on $(X, \omega)$. In particular, this means that the two cylinders in $\cV_1$ are disjoint and, in fact, never intersect the same horizontal cylinder. 

Since each component of $(Y, \eta)$ must contain a cylinder from $\cC_1$ we notice that the cylinders from $\cC_1$ must also be disjoint and adjacent to a unique cylinder in $\cC_2$. If there is not a cylinder in $\cV_2$ that passes through both cylinders in $\cC_2$, then $(X, \omega)$ would be disconnected. It follows that the cylinders in $\cC_2$ are two-sided adjacent. Since the cylinders in $\cC_2$ are no longer adjacent on $(Y, \eta)$ it follows that $\cV_2$ contains the three horizontal saddle connections that join the top boundary of a cylinder in $\cC_2$ to the bottom boundary of another (one saddle connection comes from Lemma~\ref{L:sc2} and the other two come since the two cylinders in $\cC_2$ are two-sided adjacent on $(X, \omega)$, but not on $(Y, \eta)$).

Since there are seven horizontal saddle connections on $(X, \omega)$, the cylinders in $\cC_1$ are disjoint and simple. The similar cylinder lemma (Lemma~\ref{L:SCL}) implies that the two cylinders in $\cC_1$ are similar and hence both contain a vertical saddle connection. Moreover, the core curve of each cylinder in $\cV_1$ must intersect the core curve of a cylinder in $\cC_1$ either zero or one time, see the left subfigure of Figure~\ref{F:shear-v1}.

\begin{figure}[h]
\centering
    \begin{tikzpicture}
		\draw (0,0) -- (0,2) -- (1,2) -- (1,0) -- (0,0);
		\draw[dotted] (0,1) -- (1,1);
		\node at (-.5, 1.5) {$C_1$}; \node at (-.5, .5) {$C_2$};
	\end{tikzpicture}
	\qquad 
	    \begin{tikzpicture}
		\draw (0,0) -- (.25, 1) -- (0, 2) -- (1,2) -- (1.25, 1) -- (1,0) -- (0,0);
		\draw[dotted] (.25,1) -- (1.25,1);
		\draw[dotted] (.25, 0) -- (.25, 2);
		\draw[dotted] (1,0) -- (1,2);
		\node at (-.5, 1.5) {$C_1$}; \node at (-.5, .5) {$C_2$};
	\end{tikzpicture}
	\caption{ The cylinder $V_1$ }
	\label{F:shear-v1}
\end{figure}


By applying the standard shear to shear $\cC_1$ and $\cC_2$ in opposite directions as in Figure~\ref{F:shear-v1}, we may perturb $(X, \omega)$ in a way to make the cylinders in $\cV_1$ are simple. The simple cylinder lemma (Lemma~\ref{L:SCL}) implies that the cylinders in $\cV_1$ are similar on the perturbed surface. Since the two cylinders in $\cC_1$ were similar too we see that the cylinder proportion theorem~\cite[Proposition 3.2]{NW} implies that the two cylinders in $\cV_1$ are also similar on $(X, \omega)$. We can now write down the surface explicitly; it is the translation surface shown in Figure~\ref{F:nearly-MWE} or its mirror image about the vertical.


\begin{figure}[h]
\centering
    \begin{tikzpicture}
		\draw (0,0) -- (0,2) -- (2,2) -- (2,3) -- (3,3) -- (3,4) -- (4,4) -- (4,2) -- (3,2) -- (3,1) -- (1,1) -- (1,0) -- (0,0);
		\draw[dotted] (0,1) -- (1,1) -- (1,2);
		\draw[dotted] (2,1) -- (2,2) -- (3,2) -- (3,3) -- (4,3);
		\draw[dashed] (2,1) -- (1,2); \draw[dashed] (3,1) -- (2,2);	
		\node at (-1.5, .5) {$\lambda y_1$};
		\node at (-1.5, 1.5) {$\lambda y_2$};
		\node at (-1.5, 2.5) {$y_2$};
		\node at (-1.5, 3.5) {$y_1$};
		\node at (.5, -.5) {$\lambda x_1$};
		\node at (3.5, -.5) {$x_1$};
		\node at (1.5, 2.3) {$a$}; \node at (2.5, 3.3) {$b$};
		\node at (1.5, .7) {$\sigma(a)$}; \node at (2.5, .7) {$\sigma(b)$};
	\end{tikzpicture}
	\caption{ The translation surface $(X, \omega)$ }
	\label{F:nearly-MWE}
\end{figure}
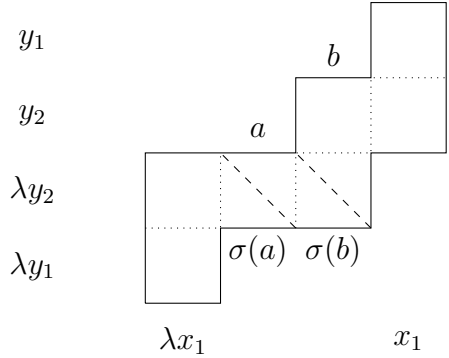 

In Figure~\ref{F:nearly-MWE}, opposite sides are identified unless labelled with $a, b, \sigma(a)$, or $\sigma(b)$. We let $\sigma$ be a permutation of $\{a, b\}$ that indicates how sides are identified. All other labels indicate lengths with $x_1, x_2, y_1, y_2$, and $\lambda$ being real numbers. By the cylinder proportion theorem~\cite[Proposition 3.2]{NW}, $\lambda > 1$. 

If $\sigma(b) = a$ then we see that one cylinder in $\cC_2$ contains a diagonal cylinder that does not intersect $\cV_1$. Since the other cylinder in $\cC_2$ cannot contain such a cylinder this violates the cylinder proportion theorem~\cite[Proposition 3.2]{NW} . Therefore, $\sigma(a) = a$ and $\sigma(b) = b$.  It follows that $(X, \omega)$ is the translation surface shown in Figure~\ref{F:MWE} or its mirror image about the vertical (for some choice of $x_1, x_2, y_1$, and $y_2$). The fact that $\M$ contains all surfaces in Figure~\ref{F:MWE} for arbitrary positive real $x_i$ and $y_i$ ($i=1,2$) follows from the fact that the standard shears in $\cC_i$ and $\cV_i$ remain in $\M$; this holds by  the cylinder deformation theorem~\cite[Theorem 1.1]{Wcyl}.

\begin{figure}[h]
\centering
    \begin{tikzpicture}
		\draw (0,0) -- (0,2) -- (2,2) -- (2,3) -- (3,3) -- (3,4) -- (4,4) -- (4,2) -- (3,2) -- (3,1) -- (1,1) -- (1,0) -- (0,0);
		\draw[dotted] (0,1) -- (1,1) -- (1,2);
		\draw[dotted] (2,1) -- (2,2) -- (3,2) -- (3,3) -- (4,3);	
		\node at (-1.5, .5) {$\lambda y_1$};
		\node at (-1.5, 1.5) {$\lambda y_2$};
		\node at (-1.5, 2.5) {$y_2$};
		\node at (-1.5, 3.5) {$y_1$};
		\node at (.5, -.5) {$\lambda x_1$};
		\node at (1.5, -1) {$(\lambda -1 ) x_2$};
		\node at (2.5, -.5) {$x_2$};
		\node at (3.5, -.5) {$x_1$};
	\end{tikzpicture}
	\caption{A surface in $\M_0$}
	\label{F:MWE}
\end{figure} 

Collapsing $\cC_1$ produces a boundary translation surface $(Z, \zeta)$ in $\mathcal{H}(1,1,0)$. By Lemma~\ref{L:collapse}, $(Z, \zeta)$ is contained in a three dimensional orbit closure $\cN$ in the boundary of $\cM$ and the relative deformation alters all four vertical cylinders. Therefore, $\cN$ is a three-dimensional nonarithmetic eigenform locus in $\mathcal{H}(1,1)$ with a non-Weierstrass periodic point marked. By Theorem~\ref{T2}, $\cN$ is the golden eigenform locus with a golden point marked and since $\lambda > 1$, $\lambda$ is the golden ratio. 

The affine invariant submanifold $\M_0$ is defined precisely as the smallest affine invariant submanifold containing the translation surfaces in Figure~\ref{F:MWE} with $\lambda$ the golden ratio and $x_i$ and $y_i$ arbitrary positive real numbers for $i = 1,2$. The four cohomology classes that correspond to changing $x_1$, $x_2$, $y_1$, or $y_2$ span a four complex-dimensional subspace of absolute cohomology. Since the tangent space of $\M$ is four complex-dimensional, it follows that these cohomology classes span the tangent space to $\M$ at $(X, \omega)$. Since these classes are contained in $\M_0$, it follows that $\M$ coincides with $\M_0$.

Recall finally that we showed that $(X, \omega)$ is the translation surface shown in Figure~\ref{F:MWE} up to taking a mirror image in the vertical. However, the mirror image of the translation surface in Figure~\ref{F:MWE} about the vertical is identical to its rotation by $\pi$ and hence in this case $\M$ is still equal to $\M_0$.
\end{proof}

In light of Lemma~\ref{L:MWE1} we make the following assumption:

\begin{ass}\label{A:connected}
The surface $(X, \omega)$ is not disconnected when $\cC_1$ or $\cV_2$ collapses.
\end{ass}

Recall that given a horizontally periodic surface the cylinder graph is the directed graph whose vertices are cylinders and that has a directed edge from cylinder $A$ to cylinder $B$ if the top boundary of $A$ borders the bottom boundary of $B$. The cylinder graph is always strongly connected. 

\begin{lemma}\label{L:at-most3}
There are at most three horizontal saddle connections connecting cylinders in $\cC_2$ to cylinders in $\cC_2$.
\end{lemma}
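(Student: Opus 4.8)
The plan is to argue by contradiction: assume that at least four of the horizontal saddle connections of $(X,\omega)$ join the top of a cylinder of $\cC_2$ to the bottom of a cylinder of $\cC_2$.

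First I would fix the bookkeeping. For any horizontally periodic surface in a stratum $\mathcal{H}(\mu)$ the singular horizontal leaves form a graph whose vertices are the zeros and whose edges are the horizontal saddle connections, so the number of horizontal saddle connections is $|\mu|+\sum_i m_i$; on $(X,\omega)\in\mathcal{H}(6)$ this is $1+6=7$, and the same formula governs any horizontally periodic degeneration. By the Remark following Lemma~\ref{L:proto}, $\cC_1$ and $\cC_2$ each consist of exactly two cylinders, so the horizontal cylinder graph of $(X,\omega)$ is a strongly connected digraph on four vertices with seven edges, and the hypothesis to be contradicted says at least four of these edges lie inside the pair $\cC_2$.

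Next I would collapse $\cV_2$. By Assumption~\ref{A:connected} the boundary surface $(Y,\eta)$ is connected, and by Lemma~\ref{L:collapse} it lies in a rank one, rel one orbit closure $\cN$ in the boundary of $\M$, so $\dim_{\C}\cN=3$. Since by the conclusion of Lemma~\ref{L:proto} the cylinders of $\cC_1$ are crossed only by cylinders of $\cV_1$, which are disjoint from $\cV_2$, the two cylinders of $\cC_1$ — together with the vertical saddle connection that $\cC_1$ carries — survive the collapse unchanged, and by the argument of Lemma~\ref{L:EC} they remain an $\cN$-equivalence class on $(Y,\eta)$. Moreover the horizontal area occupied by $\cC_2$ cannot collapse away, since a horizontal cylinder can never be swallowed by a union of vertical cylinders: a horizontal core curve contained in such a union would be freely homotopic into one of those vertical annuli, hence homologous to its vertical core curve, which is impossible as the two would have to have equal holonomy. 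Thus $(Y,\eta)$ retains at least one further horizontal cylinder, so at least three in all, and hence (a strongly connected digraph on $v$ vertices has at least $v$ edges) at least three horizontal saddle connections. Combining this with the count, and with the fact that the at-least-four $\cC_2$-internal saddle connections must cease to join two cylinders of $\cC_2$ after the collapse, one wants to conclude that $(Y,\eta)$ has exactly three horizontal saddle connections forming a directed $3$-cycle, so that its three horizontal cylinders are all simple; the identity $3=|\mu|+\sum_i m_i$ then forces $(Y,\eta)\in\mathcal{H}(2)$ or $(Y,\eta)\in\mathcal{H}(0,0,0)$. The former stratum has no rel and so cannot contain the rel one orbit closure $\cN$; the latter is a torus with marked points, in which the persisting incidence data ``$\cC_1$ is contained in $\cV_1$'' — two horizontal cylinders crossed only by the two inherited vertical cylinders — cannot be realized, by the same homotopy/holonomy obstruction used above. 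Either way we reach a contradiction.

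The step I expect to be the main obstacle is showing that collapsing $\cV_2$ actually detaches from $\cC_2$ the at-least-four $\cC_2$-internal horizontal saddle connections, i.e. ruling out that such a saddle connection persists still joining two $\cC_2$-cylinders or survives by reattaching. This should be handled by a local analysis of the collapse in the spirit of the proof of Lemma~\ref{L:sc2}: using Lemma~\ref{L:collapse} (the rel directions on the boundary surface are supported on its horizontal cylinders), any surviving $\cC_2$-to-$\cC_2$ saddle connection would manufacture a vertical cylinder internal to $\cC_2$ — that is, a cylinder of $\cV_2$ — contradicting the fact that $\cV_2$ has been collapsed.
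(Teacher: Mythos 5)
There is a genuine gap, and it sits exactly where you predicted: the claim that after collapsing $\cV_2$ the (at least four) $\cC_2$-to-$\cC_2$ horizontal saddle connections ``must cease to join two cylinders of $\cC_2$'' is not only unjustified but false. Collapsing $\cV_2$ only drives to zero the periods of those horizontal saddle connections actually contained in the cylinders of $\cV_2$; every other horizontal saddle connection merely shortens (its real part decreases by the total $\cV_2$-width it crosses) and survives, still joining the images of the same horizontal cylinders. Indeed the paper's own later analysis exhibits this: in Lemma~\ref{L:two-disjoint} (Case 1) there are three $\cC_2$-internal saddle connections, only two of which lie in $\cV_2$ (Lemma~\ref{L:two-sc}), and the two cylinders of $\cC_2$ \emph{remain adjacent} on the boundary surface. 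Your proposed patch does not repair this: a single surviving saddle connection from the top of one $\cC_2$-cylinder to the bottom of the other does not ``manufacture'' a vertical cylinder inside $\cC_2$ (for that you need a closed cycle of such adjacencies), and the rel constraint of Lemma~\ref{L:collapse}, as used in Lemma~\ref{L:sc2}, only excludes a surviving saddle connection joining the top and bottom of the \emph{same} cylinder, since only then does one get a closed curve crossing $\cC_2$ alone. Without the false detachment claim, your count gives no bound on the saddle connections of $(Y,\eta)$ and the dichotomy $\mathcal{H}(2)$ versus $\mathcal{H}(0,0,0)$ never gets off the ground. (Separately, the assertion that $\cC_2$ cannot be swallowed because a horizontal core curve contained in a union of two vertical annuli would be freely homotopic into one of them is not a valid argument, though the conclusion can be salvaged by other means.)

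For comparison, the paper's proof never degenerates the surface at all: if at least four of the seven horizontal saddle connections are internal to $\cC_2$, at most three edges remain in the (strongly connected) horizontal cylinder graph on four vertices, which forces a chain of edges $\cC_2 \to C_1 \to C_1' \to \cC_2$ through the two cylinders of $\cC_1$; then the top boundary of $C_1$ and the bottom boundary of $C_1'$ each consist of a single shared saddle connection, producing a cone point of angle $2\pi$, i.e.\ a marked point, which cannot exist in $\mathcal{H}(6)$. If you want to keep a degeneration-style argument you would have to first control precisely which saddle connections lie in $\cV_2$ — but that is the content of Lemma~\ref{L:two-sc}, which in the paper is proved \emph{after}, and using, the present lemma, so this route risks circularity as well.
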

\begin{proof}
Suppose not to a contradiction. For a horizontally periodic translation surface in $\mathcal{H}(6)$ there are seven horizontal saddle connections and at least four connect cylinders in $\cC_2$ to other cylinders in $\cC_2$. Since only three saddle connections remain and since the cylinder graph must be strongly connected, there must be a directed edge from a cylinder in $\cC_2$ to a cylinder $C_1 \in \cC_1$; a directed edge from $C_1$ to a distinct cylinder $C_1' \in \cC_1$; and a directed edge from $C_1'$ to some cylinder in $\cC_2$. However, this configuration implies that there is a marked point on the boundary of $C_1$ and $C_1'$, which is not possible. Therefore, we have a contradiction.
\end{proof}

\begin{lemma}\label{L:two-sc}
There are two horizontal saddle connections contained in $\cV_2$. By symmetry, $\cC_1$ contains two vertical saddle connections. 
\end{lemma}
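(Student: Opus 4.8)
Write $m$ for the number of horizontal saddle connections contained in $\cV_2$; the goal is $m=2$. The strategy is a two-sided estimate using the count of horizontal saddle connections on a horizontally periodic surface in $\mathcal{H}(6)$ (there are seven), together with Lemmas~\ref{L:sc2}, \ref{L:at-most3}, \ref{L:collapse} and Assumption~\ref{A:connected}. First I would record the easy reduction: since $\cV_2 \subseteq \cC_2$, any horizontal saddle connection contained in $\cV_2$ lies in the closed subsurface $\overline{\cC_2}$, so both horizontal cylinders adjacent to it belong to $\cC_2$. Thus the horizontal saddle connections contained in $\cV_2$ form a subset of the horizontal saddle connections joining cylinders of $\cC_2$ to cylinders of $\cC_2$, of which there are at most three by Lemma~\ref{L:at-most3}. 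On the other hand, Lemma~\ref{L:sc2} produces one such saddle connection — the one realizing a cylinder of $\cC_2$ as two-sided adjacent to itself — that is contained in $\cV_2$. Hence $1 \le m \le 3$.

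For the upper bound $m\le 2$ I would rule out $m=3$. In that case Lemma~\ref{L:at-most3} forces \emph{every} horizontal saddle connection joining $\cC_2$ to $\cC_2$ to be contained in $\cV_2$, hence to collapse when $\cV_2$ is collapsed; the two cylinders of $\cC_2$ then share no boundary on the boundary translation surface, so all (at most four) of its remaining horizontal saddle connections are incident to $\cC_1$. Chasing the strong connectivity of the cylinder graph of the boundary surface — subject to $\cC_1\subseteq\cV_1$ and hypothesis (3) of Lemma~\ref{L:proto}, which forbids a marked point on the boundary of a cylinder of $\cC_1$ — one finds that the collapse either disconnects $(X,\omega)$, contradicting Assumption~\ref{A:connected}, or leaves a configuration in which the rank drops by more than one, contradicting Lemma~\ref{L:collapse} together with the Mirzakhani--Wright rank inequality~\cite[Corollary 2.8]{MirWri}. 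This is the step that requires a short finite case check over the admissible adjacency patterns of the four horizontal cylinders.

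For the lower bound $m\ge 2$ I would suppose $m=1$, so that collapsing $\cV_2$ sends exactly the saddle connection of Lemma~\ref{L:sc2} to zero length. Since $\cV_2$ consists of two cylinders whose widths are driven to zero together by the standard shear, one cylinder $V'$ of $\cV_2$ is crossed by no horizontal saddle connection; its two vertical boundary circles must therefore coincide after the collapse, i.e.\ on $(X,\omega)$ the cylinder $V'$ is two-sided adjacent to itself in the vertical direction. The standard shear of $\cC_2$ then fixes the two vertical boundary arcs of $V'$ (they are a single saddle connection), so after collapsing $\cV_2$ a persisting vertical separatrix configuration obstructs the rel direction guaranteed by Lemma~\ref{L:collapse} from being a combination of the horizontal standard shears — precisely the mechanism already used in the proof of Lemma~\ref{L:sc2}. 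This contradiction yields $m=2$. The assertion that $\cC_1$ contains exactly two vertical saddle connections is the image of this argument under the rotation by $\pi/2$, which exchanges the horizontal and vertical directions and the roles $\cC_1\leftrightarrow\cV_2$, $\cC_2\leftrightarrow\cV_1$ in the conclusion of Lemma~\ref{L:proto}.

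The main obstacle is the bookkeeping in the $m=3$ case: enumerating the handful of horizontal cylinder adjacency patterns compatible with the two equivalence classes and the containment $\cC_1\subseteq\cV_1$, and checking case by case that each forces either disconnection under the collapse or too large a rank drop. The two endpoint contradictions ($m=1$ and, implicitly, $m\ge 4$) reuse templates already in hand from Lemmas~\ref{L:sc2} and~\ref{L:collapse}, so the genuinely new content is confined to this combinatorial enumeration.
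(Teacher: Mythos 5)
Your outer skeleton (bound the number $m$ of horizontal saddle connections in $\cV_2$ by $1\le m\le 3$ using Lemma~\ref{L:sc2}/Assumption~\ref{A:proto} and Lemma~\ref{L:at-most3}, then eliminate $m=1$ and $m=3$) agrees with the paper, and the reduction that any such saddle connection joins $\cC_2$ to $\cC_2$ is correct. But the two eliminations, which are the whole content of the lemma, are not actually carried out, and the mechanisms you predict for them are wrong. For $m=3$ you defer to an unperformed ``short finite case check'' and assert the contradiction will be disconnection or a rank drop of more than one. The paper's case analysis (splitting on whether the two cylinders of $\cC_2$ are two-sided adjacent) lands on contradictions of a different nature: when $\cC_2$ is not two-sided adjacent, the three loops at $\cC_2$ plus strong connectivity force the remaining four edges of the cylinder graph into a directed four-cycle, making the two cylinders of $\cC_1$ homologous (impossible in the minimal stratum) or of equal circumference, contradicting nonarithmeticity via \cite[Theorem 1.9]{Wcyl}; when $\cC_2$ is two-sided adjacent, collapsing yields (by Assumption~\ref{A:connected}) a connected four-cylinder surface whose cylinder graph is a four-cycle, i.e.\ a torus with marked points, so again the core curves of $\cC_1$ have equal length and $\M$ is arithmetic, contradicting Assumption~\ref{A:nonarithmetic}. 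Note that no rank-drop contradiction is available here: a torus with marked points is a perfectly admissible rank one boundary compatible with Lemma~\ref{L:collapse}, so your proposed mechanism would not close the case even after the enumeration you postpone.

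For $m=1$ your argument rests on the claim that the cylinder $V'$ of $\cV_2$ containing no horizontal saddle connection becomes ``two-sided adjacent to itself in the vertical direction'' and that a persisting vertical separatrix configuration obstructs the rel direction of Lemma~\ref{L:collapse}. Neither step holds as stated: the standard shear collapses both cylinders of $\cV_2$ simultaneously (their heights scale by a common factor), so nothing singles out $V'$ at the collapse, and a vertical saddle connection has purely imaginary period, so its persistence imposes no constraint on a rel class built from horizontal standard shears. The obstruction actually used in Lemma~\ref{L:sc2}, and in the paper's treatment of $m=1$, is a \emph{closed} curve (an absolute homology class) whose period the shear on $\cC_2$ changes, and such a curve is only available when $\cC_2$ is two-sided adjacent and remains so after the collapse. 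In the complementary case ($\cC_2$ not two-sided adjacent) the paper rules out $m=1$ by an entirely different tool: the two cylinders of $\cV_2$ are then simple and disjoint, hence similar by the Similar Cylinder Lemma (Lemma~\ref{L:SCL}), so if one contains a horizontal saddle connection both must, giving $m\ge 2$. Your proposal never invokes Lemma~\ref{L:SCL} nor nonarithmeticity, and these are the actual engines of the proof; without them both of your eliminations are genuine gaps.
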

\begin{proof}
We divide the proposition into two cases.

\noindent \textbf{Case 1: $\cC_2$ is not two-sided adjacent}

If $\cC_2$ is not two-sided adjacent, then each cylinder in $\cV_2$ is contained in a unique cylinder in $\cC_2$. 

If $\cV_2$ contains three horizontal saddle connections, then the (horizontal) cylinder graph must contain three loops - directed edges that begin and end at the same vertex - based at vertices in $\cC_2$. Ignoring these edges we see that the remaining four edges must form a strongly connected directed graph on the four vertices. This is only possible if these edges form a directed four-cycle. This implies that the two cylinders in $\cC_1$ are homologous, which is impossible in a minimal stratum. Alternatively, the two cylinders in $\cC_1$ must have core curves of identical length, which yields the contradiction that $\M$ is arithmetic by Wright~\cite[Theorem 1.9]{Wcyl}. 

Therefore, $\cV_2$ must contain at most two horizontal saddle connections by Lemma~\ref{L:at-most3}. Since the two cylinders in $\cV_2$ are simple and disjoint they must be similar by the similar cylinder lemma (Lemma~\ref{L:SCL}). Since $\cV_2$ contains at least one horizontal saddle connection by Assumption~\ref{A:proto}, it must contain exactly two horizontal saddle connections.


\noindent \textbf{Case 2: $\cC_2$ is two-sided adjacent}

By Assumption~\ref{A:proto}, $\cV_2$ contains at least one horizontal saddle connection and by Lemma~\ref{L:at-most3} it contains at most three. 

If $\cV_2$ contains exactly one, then collapsing $\cV_2$ leaves $\cC_2$ two-sided adjacent. However, by Lemma~\ref{L:collapse} the boundary translation surface formed by collapsing $\cV_2$ has a rel deformation that is a linear combination of the standard shears on the horizontal cylinders. However, there is a closed curve $\gamma$ that is contained in $\cC_2$ (since $\cC_2$ is two-sided adjacent) and whose period is changed by standard shear on $\cC_2$. This contradicts the definition of a rel deformation. 

If $\cV_2$ contains exactly three horizontal saddle connections, then collapsing it produces a connected (by Assumption~\ref{A:connected}) translation surface with four horizontal cylinders. The horizontal cylinder graph contains four vertices and four edges and therefore is a directed four-cycle. This corresponds to a horizontally periodic flat torus with four marked points. On the boundary translation surface all cylinders have core curves of equal length. Since the lengths of the core curves of the cylinders in $\cC_1$ were unaffected by collapsing $\cV_2$, it follows that they had identical lengths on $(X, \omega)$ and hence $\M$ is arithmetic by Wright~\cite[Theorem 1.9]{Wcyl}; contradicting Assumption~\ref{A:nonarithmetic} which states that $\M$ is nonarithmetic. Therefore, $\cV_2$ contains exactly two horizontal saddle connections.

\end{proof}

\begin{lemma}\label{L:golden-collapse}
Collapsing $\cV_2$ yields a surface in the golden eigenform locus with the golden point marked. The same statement holds for $\cC_1$ by symmetry. 
\end{lemma}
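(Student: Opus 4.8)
The plan is to run the argument that closes the proof of Lemma~\ref{L:MWE1}, now in the connected setting of Assumption~\ref{A:connected}. First I would record the structure of the degeneration: by Assumption~\ref{A:connected}, collapsing $\cV_2$ produces a \emph{connected} boundary translation surface $(Z,\zeta)$, and by Lemma~\ref{L:collapse} it lies in a rank-one, rel-one orbit closure $\cN$ in the boundary of $\M$ whose one-dimensional space of rel deformations is spanned by a nonzero linear combination of the images on $(Z,\zeta)$ of the standard shears of $\cC_1$ and $\cC_2$. By Lemma~\ref{L:two-sc} the class $\cV_2$ carries exactly two horizontal saddle connections, so a dimension count (checked, as in Lemma~\ref{L:MWE1}, to land in $\mathcal{H}(1,1,0)$) shows that $(Z,\zeta)$ is a genus-two surface with a single marked point $z$ and that $\cN$ is three complex-dimensional.

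Next I would pin down $\cN$. Forgetting $z$, the image of $\cN$ in $\mathcal{H}(1,1)$ has complex dimension at most three, so it is a Teichm\"uller curve or an eigenform locus $\Omega E_D\cap\mathcal{H}(1,1)$. In the Teichm\"uller-curve case $z$ would be generic and the rel direction of $\cN$ would move only $z$, fixing every absolute period; in the eigenform case with $z$ a Weierstrass point the rel direction would be the eigenform rel, which likewise fixes every absolute period. Both alternatives contradict Lemma~\ref{L:collapse}: since $(Z,\zeta)$ is connected, $\cC_1$ and $\cC_2$ persist as horizontal cylinders crossed by the vertical cylinders descending from $\cV_1$, so the relevant combination of horizontal standard shears changes the period of a vertical core curve, which is an absolute class. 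Hence $(Z,\zeta)$ lies in some $\Omega E_D\cap\mathcal{H}(1,1)$ and $z$ is a periodic point that is not a Weierstrass point. That $\cN$ is nonarithmetic follows because otherwise $(Z,\zeta)$ would be a torus cover, making the widths of the two vertical cylinders descending from $\cV_1$ commensurable; since collapsing $\cV_2$ leaves $\cV_1$ untouched these widths coincide with those on $(X,\omega)$, so Wright~\cite[Theorem 1.9]{Wcyl} would force $\M$ to be arithmetic, contradicting Assumption~\ref{A:nonarithmetic}.

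Finally, Theorem~\ref{T2} applies to the primitive genus-two eigenform $(Z,\zeta)$: its orbit closure is the golden eigenform locus and the non-Weierstrass periodic point $z$ is a golden point, which is the assertion for $\cV_2$. The statement for $\cC_1$ then follows from the symmetry of Assumption~\ref{A:proto} exchanging the horizontal and vertical directions together with the pairs $\cC_1\leftrightarrow\cV_2$ and $\cC_2\leftrightarrow\cV_1$. I expect the main obstacle to be the middle step: verifying rigorously that the rel deformation of $\cN$ acts nontrivially on an absolute (vertical) homology class, so that both the Teichm\"uller-curve alternative and the Weierstrass-point alternative are excluded; the stratum bookkeeping that places $(Z,\zeta)$ in $\mathcal{H}(1,1,0)$ is the other point needing care, although since Corollary~\ref{C1} and Theorem~\ref{T2} also constrain periodic points in nearby genus-two strata with marked points, the argument should survive minor changes in that count.
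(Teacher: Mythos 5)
Your framing matches the paper's at the start and the end: Lemma~\ref{L:collapse} plus Assumption~\ref{A:connected}, nonarithmeticity of the boundary locus because cylinders untouched by the collapse keep their incommensurable core-curve lengths (the paper uses $\cC_1$, you use $\cV_1$; either works via Wright~\cite[Theorem 1.9]{Wcyl}), and the final appeal to Theorem~\ref{T2}. But the middle of your argument has two genuine gaps. First, landing in $\mathcal{H}(1,1,0)$ is not a dimension count. The count of horizontal saddle connections, $2g+s-2=5$, allows $(g,s)=(1,4)$ as well as $(g,s)=(2,3)$, and in the latter case both $\mathcal{H}(2,0,0)$ and $\mathcal{H}(1,1,0)$ are a priori possible. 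The paper excludes $g=1$ because then all horizontal core curves would have equal length, forcing $\M$ to be arithmetic, and excludes $\mathcal{H}(2,0,0)$ by a separate argument using Apisa--Wright's classification of markings over the $\mathcal{H}(2)$ eigenform curves. Pointing to Lemma~\ref{L:MWE1} does not supply this: there the boundary surface was written down explicitly in the disconnected case, and no such picture is available here.

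Second, and more seriously, your mechanism for ruling out the decagon-locus (unconstrained point) and Weierstrass-point alternatives rests on a false claim. You assert that the relevant combination of the horizontal standard shears ``changes the period of a vertical core curve, which is an absolute class.'' But that combination is exactly the rel deformation produced in Lemma~\ref{L:collapse}: the constant there is chosen so that the two shears' contributions to every absolute class cancel, so by construction (and by the definition of rel) it changes no absolute period. Hence ``the rel direction fixes every absolute period'' is true in all of your cases and distinguishes nothing; the claimed contradiction evaporates. The paper's actual dichotomy is about support, not absolute periods: the rel deformation inherited from the standard shears of $\cC_1$ and $\cC_2$ shears all four horizontal cylinders of the boundary surface, whereas the rel of a rank-one locus marking an unconstrained point moves only that point (touching at most two horizontal cylinders), the rel of a locus marking two points exchanged by the hyperelliptic involution touches three, and the Weierstrass-point alternative is killed instead by producing two generically congruent cylinders and hence arithmeticity. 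Some argument of this shape is needed; the absolute-period argument as stated cannot be repaired, since it would contradict Lemma~\ref{L:collapse} itself rather than the alternatives you want to exclude.
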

\begin{proof}
Collapse $\cV_2$ and let $(Y, \eta)$ be the genus $g$ boundary translation surface. Let $\cN$ be the largest connected orbit closure in the boundary of $\M$ containing $(Y, \eta)$. By Assumption~\ref{A:connected}, $(Y, \eta)$ is connected and contains four horizontal cylinders. By Lemma~\ref{L:two-sc} it contains five horizontal saddle connections. By Lemma~\ref{L:collapse}, a linear combination of the standard shears of the two horizontal equivalence classes of cylinders on $(X, \omega)$ becomes a rel deformation on $(Y, \eta)$. In particular, this means that the number of zeros and marked points, $s$, on $(Y, \eta)$ is at least two. 

The horizontal saddle connections on a horizontally periodic translation surface is $2g + s - 2$, which in this case is five. Since $s \geq 2$, we have that $g = 1, 2$. If $g = 1$, then all horizontal cylinders have core curves of equal length. In particular, the cylinders in $\cC_1$ on $(X, \omega)$ have core curves of equal length, which implies that $\M$ is arithmetic by Wright~\cite[Theorem 1.9]{Wcyl}. This contradicts Assumption~\ref{A:nonarithmetic}. So $g = 2$ and $s = 3$.

Since the lengths of the core curves of the cylinders in $\cC_1$ is unchanged as $\cV_2$ collapses, the ratio of these lengths remains nonarithmetic on $(Y, \eta)$. Since $\cN$ is a rank one rel one orbit closure by Lemma~\ref{L:collapse}, $\cN$ is nonarithmetic and belongs to either $\mathcal{H}(2,0,0)$ or $\mathcal{H}(1,1,0)$. Since $\cN$ is nonarithmetic, the projection that sends $\cN$ to $\mathcal{H}(2)$ or $\mathcal{H}(1,1)$ by forgetting marked points is contained in a nonarithmetic eigenform locus $\cN'$. 


Suppose to a contradiction that $(Y, \eta)$ belongs to $\mathcal{H}(2,0,0)$.  Since $\cN$ is nonarithmetic, the projection to $\mathcal{H}(2)$ that forgets marked points has as its image a nonarithmetic Teichm\"uller curve. By Apisa-Wright~\cite[Theorem 1.3]{Apisa-Wright}, the two marked points are either (1) a periodic point and an unconstrained point, or (2) two points exchanged by the hyperelliptic involution. In the first case, the relative deformation is moving the unconstrained point (which involves altering two cylinders on $(Y, \eta)$) while fixing the rest of the surface. In the second, the relative deformation is moving one marked point and having its image under the hyperelliptic involution move in the opposite direction (which involves altering three cylinders on $(Y, \eta)$ since a marked point and its image under the hyperelliptic involution are contained in the same horizontal cylinder on the unmarked surface) while fixing the rest of the surface. However, the relative deformation is a linear combination of the two standard shears of the horizontal equivalence classes on $(X, \omega)$ and hence must involve all four horizontal cylinders. This is a contradiction. 

Therefore, $(Y, \eta)$ belongs to $\mathcal{H}(1,1,0)$. As before, $\cN'$ is either the decagon locus or a nonarithmetic eigenform locus. If $\cN'$ is the decagon locus, then the marked point is unconstrained and the relative deformation is simply moving the unconstrained point around the surface. This deformation involves at most two cylinders, which is a contradiction as before. 

Suppose now that $\cN'$ is a nonarithmetic eigenform locus in $\mathcal{H}(1,1)$. The marked point must be a periodic point. If the marked point is a Weierstrass point, then there are two congruent cylinders $C_1$ and $C_2$ on $(Y, \eta)$ that are generically congruent on $\cN$. Since these cylinders have generically equal heights in $\cN$ they must be the image of an equivalence class of horizontal cylinders on $(X, \omega)$. However, since they have generically equal lengths of core curves, $\M$ is arithmetic by Wright~\cite[Theorem 1.9]{Wcyl}, which contradicts Assumption~\ref{A:nonarithmetic}. Therefore, the marked point is a periodic point that is not a Weierstrass point. This implies that $\cN'$ contains surfaces in the golden eigenform locus with the golden point marked.

\end{proof}

Once $\cV_2$ is collapsed, Lemma~\ref{L:golden-collapse}  implies that the resulting boundary translation surface is the one shown in Figure~\ref{F:cyl2} up to rotation by $\pi$. The figure is only accurate in how it illustrates how horizontal cylinders are attached to one another. In Figure~\ref{F:cyl2}, opposite sides are identified, and the cylinders in each equivalence class are labelled.

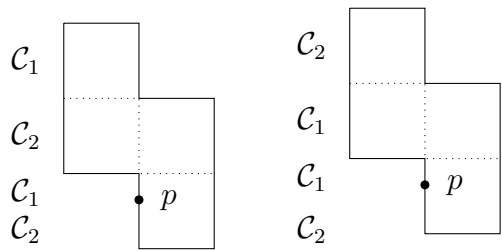
\begin{figure}[h]
\centering
	\begin{tikzpicture}
		\draw (0,1) -- (1,1) -- (1,0) -- (2,0) -- (2,-2) -- (1,-2) -- (1,-1) -- (0,-1) -- (0,1);
		\draw[dotted] (0,0) -- (1,0) -- (1,-1) -- (2,-1);
		\node at (-.5, .5) {$\cC_1$};
		\node at (-.5, -.5) {$\cC_2$};
		\node at (-.5, -1.25) {$\cC_1$};
		\node at (-.5, -1.75) {$\cC_2$};
		\draw[black, fill] (1,-1.35) circle[radius=1.5pt];\node at (1.4, -1.35) {$p$}; 
	\end{tikzpicture}
	\qquad 
		\begin{tikzpicture}
		\draw (0,1) -- (1,1) -- (1,0) -- (2,0) -- (2,-2) -- (1,-2) -- (1,-1) -- (0,-1) -- (0,1);
		\draw[dotted] (0,0) -- (1,0) -- (1,-1) -- (2,-1);
		\node at (-.5, .5) {$\cC_2$};
		\node at (-.5, -.5) {$\cC_1$};
		\node at (-.5, -1.25) {$\cC_1$};
		\node at (-.5, -1.95) {$\cC_2$};
		\draw[black, fill] (1,-1.35) circle[radius=1.5pt];\node at (1.4, -1.35) {$p$}; 
	\end{tikzpicture}
\caption{The boundary translation surface $(Y, \eta)$}
\label{F:cyl2}
\end{figure}

\begin{lemma}\label{L:two-disjoint}
Either $\cC_1$ or $\cV_2$ contains two disjoint simple cylinders.
\end{lemma}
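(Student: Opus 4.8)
The plan is to transport the question to the boundary surfaces produced by the two collapses, exploiting that each collapse is ``local'' to one equivalence class. First I would record the following observation: since $\cV_2 \subseteq \cC_2$, the two horizontal saddle connections destroyed when $\cV_2$ collapses (there are exactly two, by Lemma~\ref{L:two-sc}) lie in the cylinders of $\cC_2$, and more precisely they are covered by the vertical cylinders of $\cV_2$; but the boundaries of the cylinders of $\cC_1$ are covered only by cylinders of $\cV_1$ (a cylinder of $\cV_2$ touching a boundary of a $\cC_1$ cylinder would force that $\cV_2$ cylinder into $\cC_1$, contradicting $\cV_2\subseteq\cC_2$). Hence the two cylinders of $\cC_1$, together with the saddle connections making up their boundaries, survive the collapse of $\cV_2$ unchanged, so $\cC_1$ is an equivalence class of two disjoint simple cylinders on $(X,\omega)$ if and only if it is so on the boundary surface obtained by collapsing $\cV_2$. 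By the symmetry of hypotheses the analogous statement holds for $\cV_2$ and the surface obtained by collapsing $\cC_1$.

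Next I would use Lemma~\ref{L:golden-collapse} and the description preceding Figure~\ref{F:cyl2}: collapsing $\cV_2$ yields a surface $(Y,\eta)$ in the golden eigenform locus carrying the golden point, whose horizontal cylinder decomposition is the one drawn in Figure~\ref{F:cyl2}. Reading off this picture (it is the golden tetromino with its bottom ``square'' cylinder divided in two by the golden point), exactly one of the four horizontal cylinders — the wide middle cylinder of the golden tetromino — fails to be simple, the other three are simple, and the golden point lies on the common boundary of two of the simple cylinders. If $\cC_1$ is the equivalence class that avoids the wide cylinder, then its two cylinders are simple; one checks from Figure~\ref{F:cyl2} that in this configuration they are not adjacent, hence are disjoint, and by the first paragraph the same is then true on $(X,\omega)$, finishing the lemma.

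In the remaining case $\cC_1$ contains the non-simple horizontal cylinder, and I would then run the symmetric argument: collapsing $\cC_1$ produces a golden-eigenform-plus-golden-point surface with four \emph{vertical} cylinders, again with exactly one non-simple cylinder, and one shows that $\cV_2$ is now the equivalence class consisting of two disjoint simple cylinders, which transfers back to $(X,\omega)$ by the locality observation. The main obstacle is precisely this last step: one must rule out that \emph{both} collapses are ``bad,'' i.e. that $\cC_1$ contains the non-simple horizontal cylinder while $\cV_2$ contains the non-simple vertical cylinder. I expect this to require a careful combinatorial analysis on $(X,\omega)$ — using $\cC_1 \subseteq \cV_1$, $\cV_2 \subseteq \cC_2$, the count of seven horizontal (and seven vertical) saddle connections in $\mathcal{H}(6)$, Lemma~\ref{L:two-sc}, and the location of the golden point — to pin down which cylinder of the golden tetromino is split in each of the two collapses, thereby showing the two ``bad'' configurations are mutually exclusive.
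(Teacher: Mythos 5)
Your opening reduction is essentially sound and close to what the paper uses implicitly: the collapse of $\cV_2$ is supported inside $\cC_2$, so the cylinders of $\cC_1$ and the horizontal saddle connections on their boundaries persist, and since exactly the two saddle connections of Lemma~\ref{L:two-sc} vanish and $7-2=5$ horizontal saddle connections survive on the boundary surface, none of the surviving ones merge; hence simplicity and disjointness of the $\cC_1$ cylinders can indeed be read off the configuration of Figure~\ref{F:cyl2}, and in the labeling where the non-simple middle cylinder lies in the image of $\cC_2$ this already gives the lemma.

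The genuine gap is the case you yourself flag. When the image of $\cC_1$ contains the non-simple middle cylinder, you propose to collapse $\cC_1$, obtain the vertical analogue of Figure~\ref{F:cyl2}, and then rule out that both collapses are ``bad''; but you only say you \emph{expect} a combinatorial analysis to show the two bad configurations are mutually exclusive. That exclusivity is exactly the content of the lemma, so as written the proof is incomplete, and the second collapse is a detour rather than a resolution. The paper closes this case without any second collapse by running a dichotomy on $(X,\omega)$ itself: either the two cylinders of $\cC_2$ are adjacent or they are not. If they are adjacent, Lemmas~\ref{L:sc2}, \ref{L:at-most3} and \ref{L:two-sc} give exactly three horizontal saddle connections joining $\cC_2$ to $\cC_2$, of which only the two contained in $\cV_2$ vanish, so the adjacency survives the collapse and forces the labeling of Figure~\ref{F:cyl2} in which $\cC_1$ consists of two disjoint simple cylinders, and one pulls this back as in your first paragraph. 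If they are not adjacent, then the only horizontal saddle connections joining $\cC_2$ to $\cC_2$ are the two in $\cV_2$, each joining the top and bottom boundary of a single cylinder of $\cC_2$; this confines each cylinder of $\cV_2$ to one cylinder of $\cC_2$, crossing a single such saddle connection, so the two cylinders of $\cV_2$ are simple and disjoint already on $(X,\omega)$. Your missing ``mutual exclusivity'' is thus supplied by this adjacency dichotomy together with the saddle-connection counts; without that (or an equivalent argument) your proposal does not prove the lemma.
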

\begin{proof}
Let $(Y, \eta)$ be the translation surface obtained by collapsing $\cV_2$ on $(X, \omega)$. We divide the proof into two cases. 

\noindent \textbf{Case 1: The two cylinders in $\cC_2$ are adjacent to each other }

If the two cylinders in $\cC_2$ are two-sided adjacent, then there are at least three horizontal saddle connections on the boundary of $\cC_2$ that connects a cylinder in $\cC_2$ to a cylinder in $\cC_2$ (there are two such saddle connections joining the two distinct cylinders and then one more joining a cylinder in $\cC_2$ to itself by Lemma~\ref{L:sc2}). If the two cylinders are one-sided adjacent then there are still at least three horizontal saddle connections on their boundary (there is one saddle connection joining the two distinct cylinders and then two saddle connections in $\cV_2$ that connect the boundary of a cylinder to itself). By Lemma~\ref{L:at-most3}, there are exactly three horizontal saddle connections on the boundary of $\cC_2$. By Lemma~\ref{L:two-sc}, there are only two horizontal saddle connections contained in $\cV_2$. Therefore, the two cylinders in $\cC_2$ remain adjacent on the boundary translation surface $(Y, \eta)$. In particular, $(Y, \eta)$ must be the rightmost figure in Figure~\ref{F:cyl2}. We see that the two cylinders in $\cC_1$ are disjoint and simple on $(X, \omega)$.

\noindent \textbf{Case 2: The two cylinders in $\cC_2$ are not adjacent to each other }

As we proved in Lemma~\ref{L:two-sc} Case 1, if the two cylinders in $\cC_2$ are not adjacent, then there are exactly two horizontal saddle connections that connect two cylinders in $\cC_2$. This shows that $\cV_2$ contains two disjoint simple cylinders. 

\end{proof}

Without loss of generality, perhaps after rotating $(X, \omega)$ by $\frac{\pi}{2}$ we make the following assumption: 

\begin{ass}\label{A:v-simple}
Assume that $\cV_2$ contains two disjoint simple cylinders
\end{ass}

This is the first assumption that we have made that is not symmetric in how it treats the horizontal and vertical direction. 

\begin{proof}[Proof of Theorem~\ref{T:MWE}]
We will begin by showing that $\M$ coincides with $\M_0$. By Lemma~\ref{L:golden-collapse} the boundary translation surface $(Y, \eta)$ formed by collapsing $\cV_2$ is the golden eigenform locus with the golden point marked (see Figure~\ref{F:cyl2}, which is only an accurate up to rotation by $\pi$, and only represents horizontal cylinders adjacency). Since $\cV_2$ contains two disjoint simple cylinders, these cylinders become vertical saddle connections on $(Y, \eta)$ that are contained in $\cC_2$

\noindent \textbf{Case 1: The cylinders in $\cC_2$ are not adjacent}

This implies that $(Y, \eta)$ is the rightmost figure in Figure~\ref{F:cyl2}. Since both cylinders in $\cC_2$ on $(Y, \eta)$ contain a vertical saddle connection we see that both cylinders in $\cC_2$ on $(Y, \eta)$ are as drawn in Figure~\ref{F:cyl2}. Moreover, apply the standard shear to the cylinders in $\cC_1$ so that $(Y, \eta)$ is, after deforming the cylinders in $\cC_1$, exactly as drawn in Figure~\ref{F:final-case1}. The two vertical saddle connections coming from the image of the core curves of the cylinders in $\cV_2$ are labelled $s_1$ and $s_2$.

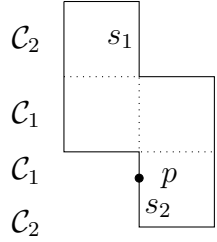
\begin{figure}[h]
\centering
	\qquad 
		\begin{tikzpicture}
		\draw (0,1) -- (1,1) -- (1,0) -- (2,0) -- (2,-2) -- (1,-2) -- (1,-1) -- (0,-1) -- (0,1);
		\draw[dotted] (0,0) -- (1,0) -- (1,-1) -- (2,-1);
		\node at (-.5, .5) {$\cC_2$};
		\node at (-.5, -.5) {$\cC_1$};
		\node at (-.5, -1.25) {$\cC_1$};
		\node at (-.5, -1.95) {$\cC_2$};
		\node at (1.25, -1.75) {$s_2$}; \node at (.75, .5) {$s_1$}; 
		\draw[black, fill] (1,-1.35) circle[radius=1.5pt];\node at (1.4, -1.35) {$p$}; 
	\end{tikzpicture}
\caption{The boundary translation surface $(Y, \eta)$ after applying the standard shear to $\cC_1$}
\label{F:final-case1}
\end{figure}

Opening up the collapsed simple cylinders shows us that the orbit closure contained the surface in Figure~\ref{F:MWE2} up to rotation by $\pi$. Opposite sides of the polygon are identified and labels indicate length where $x_i$ and $y_i$ are arbitrary positive real numbers for $i = 1, 2$, and $\phi$ is the golden ratio. 

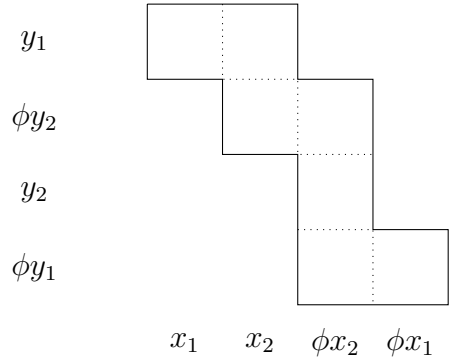
\begin{figure}[h]
\centering
    \begin{tikzpicture}
		\draw (0, 4) -- (2, 4) -- (2,3) -- (3,3) -- (3, 1) -- (4, 1) -- (4,0) -- (2,0) -- (2,2) -- (1,2) -- (1,3) -- (0,3) -- (0,4);
		\draw[dotted] (1,4) -- (1,3) -- (2,3) -- (2,2) -- (3,2);
		\draw[dotted] (2,1) -- (3,1) -- (3,0);
		\node at (-1.5, .5) {$\phi y_1$};
		\node at (-1.5, 1.5) {$y_2$};
		\node at (-1.5, 2.5) {$\phi y_2$};
		\node at (-1.5, 3.5) {$y_1$};
		\node at (.5, -.5) {$x_1$};
		\node at (1.5, -.5) {$x_2$};
		\node at (2.5, -.5) {$\phi x_2$};
		\node at (3.5, -.5) {$\phi x_1$};
	\end{tikzpicture}
	\caption{The surface in Figure~\ref{F:MWE} rotated}
	\label{F:MWE2}
\end{figure} 

This surface is the one in Figure~\ref{F:MWE} rotated by $\frac{\pi}{2}$ and so the orbit closure of $\M$ is $\M_0$ as desired.

\noindent \textbf{Case 2: The cylinders in $\cC_2$ are adjacent}

As we argued in Lemma~\ref{L:two-disjoint} Case 1, after $\cV_2$ collapses we must be in the leftmost figure of Figure~\ref{F:cyl2}. The simple cylinders in $\cV_2$ are now two vertical saddle connections in the cylinders labeled $\cC_2$. By Assumption~\ref{A:proto}, $\cC_1$ contains a vertical saddle connection. Since $\cC_1$ contains two disjoint simple cylinders they must be similar by the similar cylinder lemma (Lemma~\ref{L:SCL}) and so both contain a vertical saddle connection. Therefore, the surface drawn in Figure~\ref{F:cyl2} is correct up to determining how the cylinders in $\cC_2$ may be sheared to contain two vertical saddle connections. The four options are shown in Figure~\ref{F:four-options}.  

\begin{figure}[H]
    \begin{subfigure}[b]{0.25\textwidth}
        \centering
        \resizebox{\linewidth}{!}{\begin{tikzpicture}
        		\draw (0,0) -- (0,3.62) -- (1,3.62) -- (1,2.62) -- (2.62,2.62) -- (2.62,-3.23) -- (1,-3.23) -- (1,0) -- (0,0);
		\draw[dotted] (0,2.62) -- (1,2.62) -- (1,0) -- (2.62,0);
		\node at (-.5, 3.12) {$\cC_1$};
		\node at (-.5, 1.31) {$\cC_2$};
		\node at (-.5, -.81) {$\cC_2$};
		\node at (-.5, -2.43) {$\cC_1$};
		\node at (1.25, -.81) {$s_2$}; \node at (.25, 1.31) {$s_1$}; \node at (1.25, 1.31) {$s_1'$};
		\draw[black, fill] (1,-1.62) circle[radius=1.5pt];
	\end{tikzpicture}
        }
    \end{subfigure}
    \qquad
    \begin{subfigure}[b]{.25\textwidth}
        \centering
        \resizebox{\linewidth}{!}{\begin{tikzpicture}
        		\draw (0,0) -- (1,2.62) -- (1,3.62) -- (2,3.62) -- (2, 2.62) -- (3.62,2.62) -- (2,-1.62) -- (2,-3.23) -- (.38,-3.23) -- (.38,-1.62) -- (1,0) -- (0,0);
		\draw[dotted] (1,2.62) -- (2, 2.62); \draw[dotted] (1,0) -- (2.62,0);
		\draw[dashed] (1,0) -- (1, 2.62); \draw[dashed] (2, -1.62) -- (2, 2.62);
		\node at (2.25, 1.31) {$s_2$}; \node at (1.25, 1.31) {$s_1$};
		\draw[black, fill] (.38,-1.62) circle[radius=1.5pt];
	\end{tikzpicture}
        }
    \end{subfigure}
        \qquad
            \begin{subfigure}[b]{.3\textwidth}
        \centering
        \resizebox{\linewidth}{!}{\begin{tikzpicture}
        		\draw (0,0) -- (1.62,2.62) -- (1.62,3.62) -- (2.62,3.62) -- (2.62, 2.62) -- (4.24,2.62) -- (1.62,-1.62) -- (1.62,-3.23) -- (0,-3.23) -- (0,-1.62) -- (1,0) -- (0,0);
		\draw[dotted] (1.62,2.62) -- (2.62, 2.62); \draw[dotted] (1,0) -- (2.62,0);
		\draw[dashed] (2.62,0) -- (2.62, 2.62); \draw[dashed] (1.62, -1.62) -- (1.62, 2.62);
		\node at (2.25, 1.31) {$s_1$}; \node at (1.25, 1.31) {$s_2$};
		\draw[black, fill] (0,-1.62) circle[radius=1.5pt];
	\end{tikzpicture}
        }
    \end{subfigure}
  \caption{Four options for vertical saddle connections on $(Y, \eta)$} 
\label{F:four-options}
\end{figure}
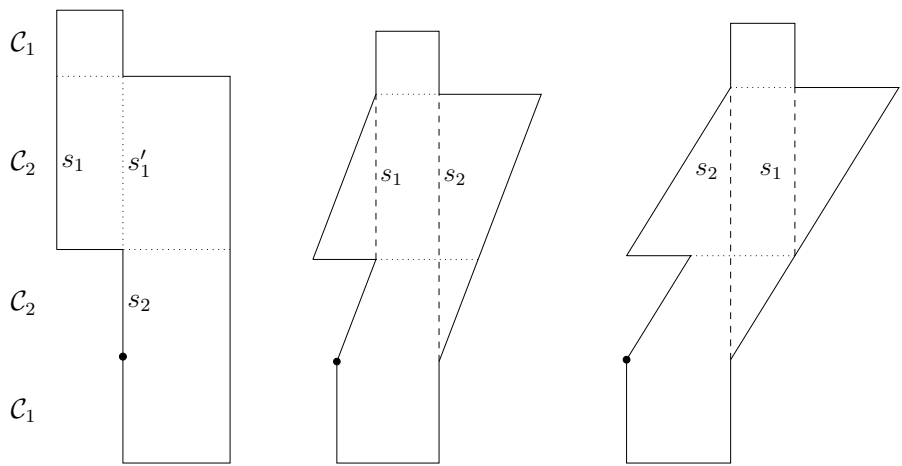

\noindent \textbf{Subcase 2a: Ruling out the leftmost figure in Figure~\ref{F:four-options}}

The first figure has two possible pairs of vertical saddle connections, either $\{s_1, s_2\}$ or $\{s_1', s_2\}$. The pair of cylinders $\{s_1, s_1'\}$ is not possible since this would imply that there is a cylinder in $\cC_2$ on $(X, \omega)$ that does not intersect $\cV_2$, which violates the cylinder proportion theorem~\cite[Proposition 3.2]{NW}. Opening up $\{s_1, s_2\}$ into simple cylinders produces the translation surface in Figure~\ref{F:two-possible}. Opposite sides are identified and labels are lengths where $\phi$ is the golden ratio and $x, y, z,$ and $w$ are arbitrary positive real numbers. Opening up $\{s_1', s_2\}$ yields the surface in Figure~\ref{F:two-possible} rotated by $\frac{\pi}{2}$. 

\begin{figure}[h]
\centering
\begin{tikzpicture}
		\draw (0,2) -- (0,3) -- (1,3) -- (1,4) -- (2,4) -- (2,3) -- (3,3) -- (3,2) -- (1,2) -- (1,0) -- (0,0) -- (0,1) -- (-1,1) -- (-1,2) -- (0,2);
		\draw[dotted] (0,1) -- (1,1);
		\draw[dotted] (0,2) -- (1,2);
		\draw[dotted] (0,3) -- (2,3);
		\draw[dotted] (1,2) -- (1,3);
		\draw[dotted] (0,1) -- (0,2);
		\draw[dotted] (2,2) -- (2,3);
		\node at (-1.5, .5) {$\phi y$};
		\node at (-1.5, 1.5) {$z$};
		\node at (-1.5, 2.5) {$\phi z$};
		\node at (-1.5, 3.5) {$y$};
		\node at (-.5, -.5) {$x$};
		\node at (.5, -.5) {$\phi w$};
		\node at (1.5, -.5) {$w$};
		\node at (2.5, -.5) {$\phi x$};
	\end{tikzpicture}
	\caption{The translation surface $(X, \omega)$}
	\label{F:two-possible}
\end{figure}
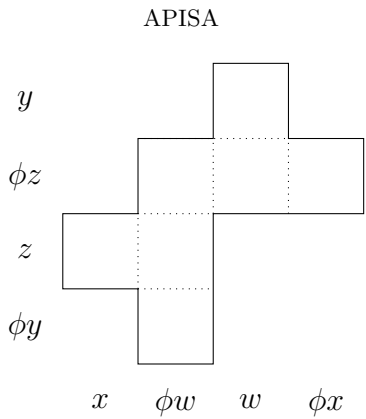

Choose $x$ and $w$ so that $\phi x < (\phi - 1) w$. In Figure~\ref{F:collapse-illustration}A, we identify two dashed saddle connections. We will apply the standard shear in $\cC_2$ so that these two saddle connections are vertical. Then we will apply the standard shear to preserve the real parts of all periods while sending the periods of these two saddle connections to zero. The dotted lines have been drawn in to keep track of where the zeros go in this degeneration. The result of the degeneration is shown in Figure~\ref{F:collapse-illustration}B. 

The surface $(Y, \eta)$ in Figure~\ref{F:collapse-illustration}B.  belongs to $\mathcal{H}(2,0,0)$ and the lengths of the edges are \[ |a| = \phi x \quad |b| = (\phi - 1) w - \phi x \quad |c| = x \quad |d| = w - x \]
Since $w$ and $x$ may be altered freely, as may the heights of the cylinders in $\cC_1$ we see that the boundary translation surface has orbit closure $\cN$ of dimension at least three. By Mirzakhani-Wright~\cite[Corollary 2.8]{MirWri}, $\cN$ has dimension exactly equal to three. Since $(Y, \eta)$ is a completely periodic surface whose horizontal cylinders have irrational ratio of lengths of core curves, $(Y, \eta)$ is an eigenform of real multiplication. Therefore, the only periodic points are Weierstrass points. By Apisa-Wright~\cite[Theorem 1.3]{Apisa-Wright}, the two marked points are either (1) a Weierstrass point and an unconstrained point, or (2) two points exchanged by the hyperelliptic involution. The two marked points are not exchanged by the hyperelliptic involution and neither is a Weierstrass point as long as $|a| \ne |b|$, which is generically the case. Therefore, we have a contradiction. 

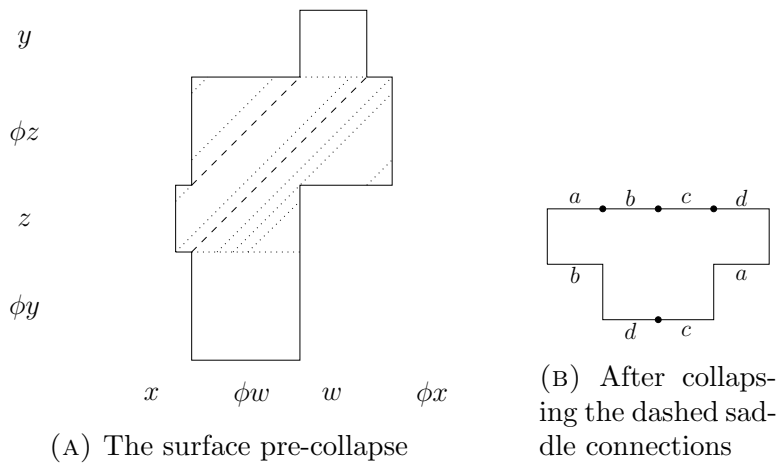
\begin{figure}[h]
       \begin{subfigure}[b]{0.5\textwidth}
        \centering
    	\resizebox{\linewidth}{!}{ \begin{tikzpicture}
		\draw (.76,1.62) -- (.76,2.62) -- (1,2.62) -- (1,4.24) -- (2.62,4.24) -- (2.62,5.24) -- (3.62,5.24) -- (3.62,4.24) -- (4,4.24) -- (4,2.62) -- (2.62,2.62) -- (2.62,0) -- (1,0) -- (1,1.62) -- (.76,1.62);
		\draw[dashed] (1, 2.62) -- (2.62,4.24);
		\draw[dashed] (1, 1.62) --  (3.62,4.24);
		\draw[dotted] (.76, 2.38) -- (1, 2.62);
		\draw[dotted] (1.62, 1.62) -- (4, 4);
		\draw[dotted] (1,4) -- (1.24, 4.24);
		\draw[dotted] (1.84, 1.62) --(2.62, 2.38);
		\draw[dotted] (.76, 1.62) -- (3.38, 4.24);
		\draw[dotted] (2.62, 4.24) -- (3.62, 4.24);
		\draw[dotted] (1, 1.62) -- (2.62, 1.62);
		\draw[dotted] (1.38,1.62) -- (4, 4.24);
		\draw[dotted] (3.62, 2.62) -- (4, 3); 
		\draw[dotted] (1,3) -- (2.24, 4.24); 
		\node at (-1.5, .8) {$\phi y$};
		\node at (-1.5, 2.1) {$z$};
		\node at (-1.5, 3.4) {$\phi z$};
		\node at (-1.5, 4.8) {$y$};
		\node at (.4, -.5) {$x$};
		\node at (1.9, -.5) {$\phi w$};
		\node at (3.1, -.5) {$w$};
		\node at (4.6, -.5) {$\phi x$};
	\end{tikzpicture}
	 }
     \caption{The surface pre-collapse}
    \end{subfigure}
    \qquad 
           \begin{subfigure}[b]{0.25\textwidth}
        \centering
    	\resizebox{\linewidth}{!}{ \begin{tikzpicture}
		\draw (0,0) -- (1,0) -- (1, -1) -- (3, -1) -- ( 3,0) -- ( 4,0) -- ( 4,1) -- (0,1) -- (0,0);
		\draw[black, fill] (1,1) circle[radius=1.5pt]; \node at (.5,1.2) {$a$}; \node at (.5, -.2) {$b$};
		\draw[black, fill] (2,1) circle[radius=1.5pt]; \node at (1.5, 1.2) {$b$}; \node at (1.5, -1.2) {$d$};
		\draw[black, fill] (3,1) circle[radius=1.5pt]; \node at (2.5, 1.2) {$c$}; \node at (2.5, -1.2) {$c$};
		\draw[black, fill] (2,-1) circle[radius=1.5pt]; \node at (3.5, 1.2) {$d$}; \node at (3.5, -.2) {$a$};
	\end{tikzpicture}
	 }
     \caption{After collapsing the dashed saddle connections}
    \end{subfigure}
	\caption{Degenerating $(X, \omega)$}
	\label{F:collapse-illustration}
\end{figure}	

\noindent \textbf{Subcase 2b: The right two figures in Figure~\ref{F:four-options}}

Slicing the right two figures in Figure~\ref{F:four-options} along $\{s_1, s_2\}$ and inserting a simple cylinder yields two figures that differ by a reflection across the vertical. Our argument will show that one and hence both resulting translation surfaces belong to $\M_0$. 

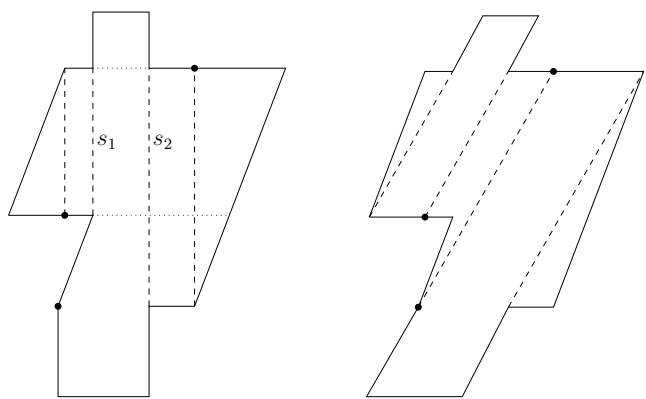
\begin{figure}[H]
    \begin{subfigure}[b]{.3\textwidth}
        \centering
        \resizebox{\linewidth}{!}{\begin{tikzpicture}
        		\draw (-.5,0) -- (.5, 2.62) -- (1,2.62) -- (1,3.62) -- (2,3.62) -- (2, 2.62) -- (2.81, 2.62)  -- (4.43,2.62) -- (2.81,-1.62) -- (2,-1.62) -- (2,-3.23) -- (.38,-3.23) -- (.38,-1.62) -- (1,0) -- (-.5,0);
		\draw[dotted] (1,2.62) -- (2, 2.62); \draw[dotted] (1,0) -- (3.43,0);
		\draw[dashed] (1,0) -- (1, 2.62); \draw[dashed] (2, -1.62) -- (2, 2.62);
		\draw[dashed] (2.81, -1.62) -- (2.81, 2.62); \draw[dashed] (.5, 0) -- (.5, 2.62);
		\node at (2.25, 1.31) {$s_2$}; \node at (1.25, 1.31) {$s_1$};
		\draw[black, fill] (.38,-1.62) circle[radius=1.5pt]; \draw[black, fill] (2.81,2.62) circle[radius=1.5pt];
		\draw[black, fill] (.5, 0) circle[radius=1.5pt];
	\end{tikzpicture}
        }
    \end{subfigure}
    \qquad
    \begin{subfigure}[b]{.3\textwidth}
        \centering
        \resizebox{\linewidth}{!}{\begin{tikzpicture}
        		\draw (-.5,0) -- (.5, 2.62) -- (1,2.62) -- (1.54,3.62) -- (2.54,3.62) -- (2, 2.62) -- (2.81, 2.62)  -- (4.43,2.62) -- (2.81,-1.62) -- (2,-1.62) -- (1.17,-3.23) -- (-.55,-3.23) -- (.38,-1.62) -- (1,0) -- (-.5,0);
		\draw[dashed] (-.5, 0) -- (1, 2.62); \draw[dashed] (.5, 0) -- (2, 2.62);
		\draw[dashed] (.38, -1.62) -- (2.81, 2.62); \draw[dashed] (2, -1.62) -- (4.43, 2.62);
		\draw[black, fill] (.38,-1.62) circle[radius=1.5pt]; \draw[black, fill] (2.81,2.62) circle[radius=1.5pt];
		\draw[black, fill] (.5, 0) circle[radius=1.5pt];
	\end{tikzpicture}
        }
    \end{subfigure}
  \caption{The surface produced by gluing in simple cylinders} 
\label{F:final-form}
\end{figure}

The surface we produce is the leftmost one in Figure~\ref{F:final-form}. We may shear the cylinders in $\cC_1$ to produce the rightmost figure in Figure~\ref{F:final-form}, which contains two equivalent cylinders $\cV_1'$ (outlined in dashed lines) that contain $\cC_1$ and two more equivalent cylinders $\cV_2'$ that are contained in $\cC_2$. Notice that Assumption~\ref{A:proto} and Assumption~\ref{A:connected} holds for this surface with the cylinder groups $\cC_1, \cC_2, \cV_1',$ and $\cV_2'$. We see too that $\cC_1$ contains two disjoint simple cylinders, which is Assumption~\ref{A:v-simple}. Since the two cylinders in $\cV_1$ are disjoint we are in Case 1 of this proof, which shows that the surface must belong to $\M_0$, as desired.

\noindent \textbf{Periodic Points in $\M$}

Now we will establish the second half the claim in Theorem~\ref{T:MWE}, namely that $\M$ has no periodic points. Suppose to a contradiction that $p$ is a periodic point on the translation surface $(Z, \zeta)$ shown in Figure~\ref{F:MWE}. By Apisa-Wright~\cite[Lemma 7.1]{Apisa-Wright} (which is a cleaner statement of a result in Apisa~\cite{Apisa-mp1}) if the marked point lies in the interior of a cylinder in $\cC_2$ then it is one of the solid points shown in Figure~\ref{F:MWE-final}. If it lies in the interior of a cylinder in $\cV_1$ then it is an open dot shown in Figure~\ref{F:MWE-final}.

\begin{figure}[h]
\centering
    \begin{tikzpicture}
		\draw (0,0) -- (0,2) -- (2,2) -- (2,3) -- (3,3) -- (3,4) -- (4,4) -- (4,2) -- (3,2) -- (3,1) -- (1,1) -- (1,0) -- (0,0);
		\draw[dotted] (0,1) -- (1,1) -- (1,2);
		\draw[dotted] (2,1) -- (2,2) -- (3,2) -- (3,3) -- (4,3);	
		\node at (-1.5, .5) {$\lambda y_1$};
		\node at (-1.5, 1.5) {$\lambda y_2$};
		\node at (-1.5, 2.5) {$y_2$};
		\node at (-1.5, 3.5) {$y_1$};
		\node at (.5, -.5) {$\lambda x_1$};
		\node at (1.5, -1) {$(\lambda -1 ) x_2$};
		\node at (2.5, -.5) {$x_2$};
		\node at (3.5, -.5) {$x_1$};
		\draw[black, fill] (.5, 1.5) circle[radius=1.5pt]; 
		\draw[black, fill] (2.25, 1.5) circle[radius=1.5pt];
		\draw[black, fill] (2.5, 2.5) circle[radius=1.5pt];
		\draw[black, fill] (3.5, 2.5) circle[radius=1.5pt];
		\draw[black] (.5, .5) circle[radius=1.5pt];
		\draw[black] (.5, 1.5) circle[radius=1.5pt];
		\draw[black] (3.5, 2.5) circle[radius=1.5pt];
		\draw[black] (3.5, 3.5) circle[radius=1.5pt];
	\end{tikzpicture}
	\caption{Potentially periodic points in $\M_0$}
	\label{F:MWE-final}
\end{figure}
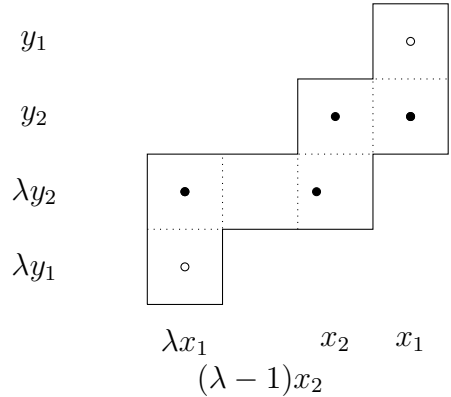 

Any point may be moved into the interior of either $\cV_1$ or $\cC_2$ by using the standard shear in a horizontal or vertical equivalence class to perform one complete Dehn twist in those cylinders. Using these Dehn twists we see that the bottom three points are all in the same $\GL(2, \R)$ orbit, as are the top three points. However, using the standard shear to perform one complete Dehn twist in the cylinders in $\cV_2$ we see that the two solid points in $\cV_2$ are sent to points that cannot be periodic points. Therefore, $\M$ contains no periodic points. 

\end{proof}

\bibliography{mybib}{}
\bibliographystyle{amsalpha}
\end{document}